\documentclass[reqno]{amsart}

\usepackage[bottom=3cm, top=3cm, left=3cm, right=3cm]{geometry}
\usepackage{amsmath}
\usepackage{amsthm}
\usepackage{amsfonts}
\usepackage{amssymb}
\usepackage{etoolbox}
\usepackage{array}
\usepackage{algorithm}
\usepackage{algpseudocode}
\usepackage{mathtools}


\usepackage{cases}
\usepackage{empheq}
\usepackage[shortlabels]{enumitem}
\setlist[itemize]{noitemsep,nolistsep}
\setlist[enumerate]{noitemsep,nolistsep}
\usepackage{tabularx}
\usepackage{stmaryrd}

\usepackage[dvipsnames]{xcolor}
\usepackage{graphicx}
\definecolor{tabblue}{rgb}{0.12156862745098039, 0.4666666666666667, 0.7058823529411765}
\definecolor{taborange}{rgb}{1.0, 0.4980392156862745, 0.054901960784313725}
\definecolor{tabgreen}{rgb}{0.17254901960784313, 0.6274509803921569, 0.17254901960784313}
\definecolor{tabred}{rgb}{0.8392156862745098, 0.15294117647058825, 0.1568627450980392}
\definecolor{tabpurple}{rgb}{0.5803921568627451, 0.403921568627451, 0.7411764705882353}


\colorlet{citecolor}{tabgreen}
\colorlet{linkcolor}{tabblue}
\colorlet{urlcolor}{taborange!75!black}
\usepackage{tikz}
\usepackage{tikz-cd}

\usepackage{hyperref}
\hypersetup{
  breaklinks=true,
  colorlinks=true,
  linkcolor=linkcolor,
  urlcolor=urlcolor,
  citecolor=citecolor,
  }
  

\usepackage[
    style=alphabetic,       
    backend=bibtex,          
    maxcitenames=2,         
    maxbibnames=10,         
    minbibnames=3,          
    giveninits=true,        
    url=false,              
    doi=false,              
    isbn=false,             
    backref=false,          
    hyperref=true,          
    alldates=short,         
    sorting=nyt             
]{biblatex}
\addbibresource{biblio.bib}


\usepackage[noabbrev,capitalize,nameinlink]{cleveref}

\crefname{equation}{}{}
\Crefname{equation}{Eq.}{}



\usepackage{thmtools}
\usepackage{framed}

\input{commands}
\usepackage[colorinlistoftodos,prependcaption,textsize=tiny,textwidth=2.6cm]{todonotes}

\makeatletter
\@mparswitchfalse%
\makeatother
\normalmarginpar
\makeatletter
\providecommand\@dotsep{5}
\renewcommand{\listoftodos}[1][\@todonotes@todolistname]{%
  \@starttoc{tdo}{#1}}
\makeatother



\makeatletter
\renewcommand{\tocsection}[3]{%
  \indentlabel{\@ifnotempty{#2}{\bfseries\ignorespaces#1 #2\quad}}\bfseries#3}
\renewcommand{\tocsubsection}[3]{%
  \indentlabel{\@ifnotempty{#2}{\ignorespaces#1 #2\quad}}#3}
\def\l@subsection{\@tocline{2}{0pt}{2.5pc}{5pc}{}}
\def\l@subsubsection{\@tocline{3}{0pt}{3.5pc}{7pc}{}}
\makeatother

\allowdisplaybreaks


\newcounter{counter}
\numberwithin{counter}{section}
\newtheorem{theorem}[counter]{Theorem}
\newtheorem{lemma}[counter]{Lemma}
\newtheorem{proposition}[counter]{Proposition}
\newtheorem{corollary}[counter]{Corollary}
\theoremstyle{definition}
\newtheorem{remark}[counter]{Remark}
\newtheorem{example}[counter]{Example}
\newtheorem{definition}[counter]{Definition}

\numberwithin{equation}{section}
\numberwithin{equation}{section}

\title[FAST LDM WITH ENERGY-DISTANCE]{
    \Large{Fast Large Deformation Matching with the Energy Distance Kernel.}}
\author{Siwan Boufadene$^\dagger$}
\author{Jean Feydy$^\diamond$}
\author{François-Xavier Vialard$^\dagger$}

\begin{document}

\vspace{-8mm}
\begin{abstract}
    We propose an efficient framework for point cloud and measure registration using bi-Lipschitz homeomorphisms, achieving $O(n\log n)$ complexity, where $n$ is the number of points. By leveraging the Energy-Distance (ED) kernel, which can be approximated by its sliced one-dimensional projections, each computable in $O(n\log n)$, our method avoids hyperparameter tuning and enables efficient large-scale optimization. 
    The main issue to be solved is the lack of regularity of the ED kernel. To this goal, we introduce two models that regularize the deformations and retain a low computational footprint. The first model relies on TV regularization, while the second model avoids the non-smooth TV regularization at the cost of restricting its use to the space of measures, or cloud of points.  Last, we demonstrate the numerical robustness and scalability of our models on synthetic and real data. 
\end{abstract}

\maketitle

\vspace{1mm}
\noindent\textbf{Keywords.} Calculus of variations $\cdot$ Energy-Distance kernel $\cdot$ Measure matching $\cdot$ Large Deformation Metric Mapping $\cdot$ Sliced approximations

\vspace{2mm}
\noindent\textbf{Mathematics Subject Classification.} 68U05, 65D18, 46E22, 58E50. 

\vspace*{\fill}

Email addresses: \href{mailto:siwan.boufadene@live.fr}{siwan.boufadene@live.fr}, 
\href{mailto:jean.feydy@inria.fr}
{jean.feydy@inria.fr},
\href{mailto:francois-xavier.vialard@u-pem.fr}{francois-xavier.vialard@univ-eiffel.fr}

\hypersetup{linkcolor=black}

\setcounter{tocdepth}{3}
{
    \hypersetup{linkcolor=black}
    \tableofcontents
}

\allowdisplaybreaks
\section{Introduction}
\label{sec:introduction}

The analysis of shape variations is a well-studied domain, with several applications in biomedical imaging \cite{rueckert1999,DAVATZIKOS1997207}. One of the standard tasks in this field is registering shapes, which can be of different natures, images, segmented surfaces, line bundles, etc... Several methods have been proposed over the last two decades, which can be divided into two main categories: the first category is optimization-based methods, in the LDM framework for example \cite{trouve_miller_younes_2005,vialard2012}, and the second category is learning-based methods \cite{Dalca_2019,zhu2021testtimetrainingdeformablemultiscale}. Typically, the second method requires a large database. Despite the name, recent foundation/universal models \cite{tian2024unigradiconfoundationmodelmedical} may not generalize well for datasets that are extreme outliers to the set of training data classes. Fine-tuning of these models is recommended, particularly under supervision, which optimization-based methods can provide. However, these methods may suffer from long computational times, depending on the dimensionality of the input. 

In this paper, we are interested in adapting regular and invertible shape registration methods in the so-called large deformation diffeomorphic metric mapping (LDDMM) framework \cite{tro98,trouve_miller_younes_2005,glaun2005} to the Energy-Distance kernel \cite{Sejdinovic_2013}. These deformations, which can be applied to images, clouds of points, or measures, are parameterized by time-dependent vector fields $v_t \in L^2([0,1],V)$, where $V$ is a (sufficiently smooth) reproducing kernel Hilbert space of vector fields defined on $\R^d$, such as Sobolev spaces of high-order. The corresponding flow $\psi_t$ is obtained by solving $\partial_t \psi(t,x) = v(t,\psi(t,x))$ with the initial condition $\psi(0,x) = x \in \R^d$.
Let $A$ be a template object and $B$ a target. The registration problem minimizes the energy functional:
\begin{equation}\label{EqVariationalProblemLDDMM}
    \int_0^1 \frac 12 \| v_t \|_{V}^2 dt + \operatorname{Sim}( \psi(1)\cdot A,B)\,,
\end{equation}
where $\operatorname{Sim}$ is a similarity measure and $\psi \cdot A$
 represents a natural transformation of the object $A$ under the action of the diffeomorphism $\psi(1)$. For instance, this action is simply its image by $\psi(1)$ for point clouds. For measures, the action is the push-forward action of the map $\psi(1)$. By the Representer Theorem, the optimal $v_t$ for a collection of points $(x_i)_{i = 1,\ldots,n}$ is written as $K \star \sum_{i = 1}^n p_i(t) \delta_{x_i(t)}$ where $K$ is the kernel of the RKHS and $p_i \in \R^d$. While this reduces the problem to finite dimensions, the regularization term $\sum_{i,j} \langle p_i ,k(x_i,x_j)p_j \rangle$ retains a quadratic computational cost, typical of general kernel interaction functionals, for which trade-offs between memory and computations are sometimes necessary \cite{Keops,NEURIPS2020_a6292668}. To overcome this computational bottleneck, various acceleration techniques have been proposed in these contexts, such as the fast multipole method \cite{greengard1987fast,ying2004kernel} or other proximity-based algorithms such as k-d tree (\cite{LI201588} for an application in the Iterative Closest Point method). While these methods achieve efficiency by approximating the kernel, they often compromise the positivity of the kernel, a property essential for the well-posedness of the variational problem \eqref{EqVariationalProblemLDDMM}. 
 As a result, such approximations limit the applicability of these methods within the LDDMM framework. Another approximation technique, called particle mesh method \cite{Cotter_2008}, consists of using a mesh to encode the quantities of interest and exchange information between the particles $(x_i)_{i = 1,\ldots,n}$ and the grid. On the mesh, FFT can be used for a separable kernel to reduce the computational burden in $O(n\log(n))$, where $n$ is the total grid size, by leveraging a regular grid structure. However, due to computational constraints, grid-based methods become very impractical for ambient dimensions greater than 4.

\vspace{0.1cm}

\noindent{\textbf{Main contribution:} }
The first contribution of this article is to provide a simple computational framework for point clouds and measures registration in the LDMM framework with the Energy-Distance (ED) kernel. We prove that not only the gradient of the ED loss \cite{hertrich2024generativeslicedmmdflows}, but also the convolution of any finite measure with the ED kernel can be approximated by a finite sum of its one-dimensional projections, with each projection being computable with an $O(n\log(n))$ time complexity. Using an Energy-Distance based loss function, this yields an efficient registration algorithm, which can be scaled to very large datasets. In addition, the ED kernel does not require any additional hyperparameter tuning, in sharp contrast with a mixture of Gaussian kernels. 
Using the ED kernel, we propose a registration algorithm that generates bi-Lipschitz homeomorphisms, which is a class of regular and invertible deformations strictly larger than diffeomorphisms. In that sense, our registration model is more expressive than standard LDDMM.
\vspace{0.1cm}

\noindent{\textbf{Main challenges:} } 
We propose to adapt the use of the energy distance kernel in the Euclidean space, defined by $K(x,y) = -| x -y|$. There are indeed several challenges that hinder the direct application of LDDMM theory with this kernel, which must be addressed to ensure theoretical well-posedness and practical feasibility.
First, while the ED kernel is not positive definite but only conditionally positive, this can be resolved by constraining the so-called momentum measure $\sum_{i = 1}^n p_i \delta_{x_i}$ to have zero-mean. Although this constraint reduces expressivity, the Energy-Distance registration framework naturally handles the addition of a translation term, maintaining universal approximation properties.

The second issue is the smoothness of the corresponding Reproducing Kernel space, which is not contained in $C^1$ functions. Typically, the Energy-Distance kernel only generates Hölder-continuous velocity fields. Since such velocity fields fail to correspond to a well-defined unique flow, additional regularization is needed in theory. We propose two parameterizations, retaining the lightweight cost of the kernel, which guarantee that the resulting deformation is a bi-Lipschitz homeomorphism. While being theoretically more expressive than LDDMM deformations, which are smooth and invertible, our formulation maintains key theoretical properties such as completeness of the group and the existence of minimizers to the variational problem. 

\vspace{0.1cm}

\noindent{\textbf{Structure of the paper: } } 
After a brief overview of the LDDMM framework, we present our main contributions. 
In section \ref{SecInterpolation}, we study in detail the finite interpolation problem using the ED kernel and general conditionally positive kernels. We identify the corresponding velocity fields space as a Beppo-Levi space, providing regularity results for the corresponding function space. In Section \ref{SecTheNeedForRegularization}, we study the registration problem for point clouds using the ED kernel. While proving the existence of solutions, we demonstrate potential non-injectivity, motivating our two regularization approaches. In the first model, we use a Total Variation (TV) regularization of the momentum-measure. A Representer Theorem adapted to TV regularization maintains the $O(n\log n)$ time-complexity. Our second model only deals with measure matching, where we apply a $L^2 $ regularization of the momentum w.r.t. the current measure. This formulation is easier to optimize numerically and applies to point clouds and discrete measures. In Section \ref{SecSlicedED}, we present how the ED kernel computations can be sliced with $O(n \log n)$ footprint, providing statistical error bounds for the approximations. In Section \ref{SecExperiments}, we detail an $O(n\log n)$ algorithm for registration, and evaluate them on synthetic and real data. The ED kernel and its sliced version frequently outperform standard kernels w.r.t. the robustness of the optimization. Moreover, it requires \textbf{no hyperparameter} tuning, offering significant advantages in various scenarios.

\vspace{0.1cm}

\noindent{\textbf{Perspectives: } } 
Our light computational framework for LDDMM opens the way to its use in two important applications: processing small but high-precision datasets, and handling high-dimensional biomedical data \cite{SingleCellsOT}, possibly across different modalities. 
The energy functional consists of two components: the energy, which can be computed in 
$O(n\log n)$, and the data discrepancy, which is particularly crucial in practice. Recent advancements have demonstrated the effectiveness of incorporating higher-order information, such as through currents or varifolds \cite{Charon_2013}. An open question remains to develop higher-order similarity measures between point clouds that can also be computed in $O(n\log n)$.

From a theoretical perspective, our experiments suggest the further study of a Hamiltonian formulation, which appears well-suited for this framework.

\subsection{Introduction to Large Deformation by Diffeomorphisms}

This section provides a brief overview of the mathematical framework for large deformations by diffeomorphisms (see for instance \cite{glaun2005}). 

\subsubsection{Deformation group}
The idea is to model diffeomorphisms, i.e. smooth invertible maps, using displacement fields $v_t \in V$, where $V$ is a space of sufficiently smooth vector fields. In such a case, it is called admissible.

\begin{definition}[Admissible Hilbert spaces]
    A Hilbert space $V$ is said to be \textit{admissible} if
    \begin{equation}
        (V,\|\cdot \|_{V}) \hookrightarrow (\mathcal{C}_0^1(\R^d,\R^d),\| \cdot \|_{1,\infty}) \,,
    \end{equation}
    where $\mathcal{C}_0^1(\R^d,\R^d)$ is the space of $\mathcal{C}^1$ fields that tend to zero at infinity, along with their partial derivatives. 
\end{definition}

\begin{example}
    The Sobolev space $H^s(\R^d,\R^d)$, where $s > 0$, is defined as
    \begin{equation}
        H^s(\R^d,\R^d) = \left\{ v \in L^2(\R^d,\R^d) \, \bigg|\, \|v\|_{s}^2 \coloneqq \int (1+|\xi|^2)^s |\hat{v}(\xi)|^2 d\xi < \infty \right\} \,,
    \end{equation}
    A classical result is that if $s > \frac{d}{2} +1$, by the Sobolev embedding theorem: 
    \begin{equation}
        (H^s(\R^d,\R^d),\|\cdot \|_{\mathcal{H}^s}) \hookrightarrow (\mathcal{C}_0^1(\R^d,\R^d),\| \cdot \|_{1,\infty}) \,.
    \end{equation}
\end{example}
We will now denote $V$ an admissible space for the rest of the section, and define the time-dependent spaces
\begin{equation}
    L^2([0,1],V) = \left\{ v:(t,x) \in [0,1] \times \R^d \mapsto v_t(x) \in V \, \bigg|\, \int_0^1 \|v_t\|_V^2dt < \infty \right\}\,,
\end{equation}
and 
\begin{equation}
    L^1([0,1],V) = \left\{ v:(t,x) \in [0,1] \times \R^d \mapsto v_t(x) \in V \, \bigg|\, \int_0^1 \|v_t\|_Vdt < \infty \right\} \subset L^2([0,1],V)\,.
\end{equation}
The admissibility condition is enough to prove the following theorem:
\begin{theorem}[see \cite{younes2010shapes}]\label{th:v_admi_diffeo}
    Let $v \in L_V^1$. Then, for all $x \in \R^d$, there exists a unique continuous solution $t \mapsto \psi_t^v(x)$ that verifies the Lagrangian flow equation
    \begin{equation}
        \psi_t^v(x) = x + \int_0^t v_s \circ \psi_s^v(x)ds \,.
    \end{equation}
    Moreover, for all $t\geq 0$, the application $\psi_t^v$ is a $\mathcal{C}^1$ diffeomorphism of $\R^d$. Finally, the applications $(t,x) \mapsto \psi_t^v(x)$ are uniformly continuous, in a uniform way regarding $v$ on every bounded set in $L_V^2$.
\end{theorem}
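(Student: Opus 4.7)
The plan is to treat this as a time-dependent Cauchy-Lipschitz theorem, with the admissibility of $V$ supplying exactly the spatial regularity needed. The starting point is the embedding $V \hookrightarrow C_0^1(\R^d,\R^d)$, which yields a constant $C > 0$ such that $\|v_t\|_{1,\infty} \le C\|v_t\|_V$. In particular each $v_t$ is globally Lipschitz on $\R^d$ with Lipschitz constant $L(t) := C\|v_t\|_V$, and $L \in L^1([0,1])$ by hypothesis. This is the only regularity I will use.

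I would first prove existence and uniqueness of the trajectory $t \mapsto \psi_t^v(x)$ by a Banach fixed-point argument applied to $\Phi(\gamma)(t) := x + \int_0^t v_s(\gamma(s))\,ds$ on $C([0,1],\R^d)$, endowed with the weighted norm $\|\gamma\|_w := \sup_t e^{-2\int_0^t L(s)ds}|\gamma(t)|$; this weight turns $\Phi$ into a strict contraction regardless of the time horizon. From the integral equation, a direct Gronwall estimate gives the bi-Lipschitz bound
\begin{equation*}
    |x-y|\, e^{-\int_0^t L(s)ds} \;\le\; |\psi_t^v(x)-\psi_t^v(y)| \;\le\; |x-y|\, e^{\int_0^t L(s)ds},
\end{equation*}
so $\psi_t^v$ is injective and proper, hence a homeomorphism onto its image. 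Surjectivity together with an explicit inverse is obtained by running the backward flow of the reversed vector field $\tilde v_s := -v_{t-s}$, which by the same existence theorem produces a continuous map inverting $\psi_t^v$.

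For the $C^1$ diffeomorphism property I would introduce the variational equation $\partial_t J_t(x) = Dv_t(\psi_t^v(x))\, J_t(x)$ with $J_0(x) = \Id$. Since $Dv_t$ is bounded by $L(t)/C$, the same fixed-point argument solves this linear ODE. I would then show $J_t(x) = D_x \psi_t^v(x)$ by bounding the difference quotient $h^{-1}(\psi_t^v(x+he) - \psi_t^v(x)) - J_t(x)e$ via a Gronwall estimate on an auxiliary quantity, exploiting the continuity of $Dv_t$ at $\psi_t^v(x)$. Continuity of $x \mapsto J_t(x)$ then follows from continuous dependence of the linear ODE on its data, giving $\psi_t^v \in C^1$; applying the same argument to the inverse flow yields the $C^1$ diffeomorphism statement.

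Finally, for the uniform continuity claim on bounded sets of $L^2([0,1],V)$, the key observation is Cauchy-Schwarz: $\int_0^1 L(s)\,ds \le C\,\|v\|_{L^2_V}$, so balls in $L^2_V$ are mapped into balls in $L^1([0,1])$ by $v \mapsto L$. Joint uniform continuity then comes from combining the Lipschitz-in-$x$ estimate above with the time estimate $|\psi_t^v(x)-\psi_{t'}^v(x)| \le \int_{t'}^t \|v_s\|_\infty ds$, again controlled by $\|v\|_{L^2_V}$ via Cauchy-Schwarz. The main obstacle in my view is not any single step but the bookkeeping required for the $C^1$ property: because $L(t)$ is only $L^1$ in time rather than bounded, one cannot recycle the autonomous Cauchy-Lipschitz proof verbatim and must route every estimate through Gronwall-type inequalities with $L^1$ weights.
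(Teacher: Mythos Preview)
The paper does not actually prove this theorem: it is stated with the citation ``see \cite{younes2010shapes}'' and used as background, with no argument given. So there is no paper proof to compare against.

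Your outline is the standard one (essentially the argument in Younes' book) and each step is sound. The weighted-norm fixed point gives global existence in one shot; the Gr\"onwall bi-Lipschitz estimate plus the reversed flow gives the homeomorphism; the variational equation with $L^1$-in-time coefficient is exactly how the $C^1$ regularity is obtained in the reference. For the last claim, your Cauchy--Schwarz remark is the right idea, and you should make explicit that it upgrades the time estimate to $|\psi_t^v(x)-\psi_{t'}^v(x)| \le C\,|t-t'|^{1/2}\|v\|_{L^2_V}$, which together with the $x$-Lipschitz bound gives uniform continuity on $[0,1]\times\R^d$ with moduli depending only on $\|v\|_{L^2_V}$. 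The only place to be a bit careful is the differentiability step: since $Dv_t$ is merely continuous (not uniformly continuous on all of $\R^d$), the estimate on the difference quotient should be localized using that trajectories from nearby initial points stay in a common compact set, but this is routine.
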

The deformation space of interest is then defined as
\begin{equation}
    \mathcal{A}_V \coloneqq \{ \psi_1^v \,|\, v \in L_V^2 \} \,,
\end{equation}
which verifies the following property:
\begin{proposition}[(see \cite{tro98})]
    The function space $\mathcal{A}_V$ is a diffeomorphism group, which is complete for the metric defined by
    \begin{equation}
        d(\id,\psi) \coloneqq \inf \left\{ \|v\|_{L_V^2} \,|\, v \in L^2_V, \psi = \psi_1^v \right\}\,,
    \end{equation}
    and $d$ is extended to arbitrary diffeomorphisms $\phi,\psi \in \mathcal{A}_V$ by right-invariance $d(\phi,\psi) \coloneqq d(\id,\psi \circ \phi^{-1})$.
\end{proposition}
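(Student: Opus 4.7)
The plan is to prove in order: (i) $\mathcal{A}_V$ is closed under composition and inversion, hence a group; (ii) the formula for $d$ defines a right-invariant metric; (iii) the metric space $(\mathcal{A}_V, d)$ is complete. The first two parts reduce to two standard reparametrization tricks for time-dependent vector fields, and the third is the main technical content.

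For the group structure, given $\psi = \psi_1^v$ and $\phi = \psi_1^w$ with $v, w \in L_V^2$, I define the concatenation $u \in L_V^2$ by $u_t = 2 v_{2t}$ on $[0, 1/2]$ and $u_t = 2 w_{2t-1}$ on $[1/2, 1]$. Substituting $\tilde\phi_s = \phi_{s/c}$ in the flow ODE with speed $c$ shows that each rescaled piece produces the original flow over a shortened interval, so $\psi_1^u = \phi \circ \psi$. A direct computation gives $\|u\|_{L_V^2}^2 = 2(\|v\|_{L_V^2}^2 + \|w\|_{L_V^2}^2)$, so $\mathcal{A}_V$ is closed under composition. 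For the inverse, the time-reversed field $\tilde v_t = -v_{1-t}$ satisfies $\psi_1^{\tilde v} = \psi^{-1}$ with $\|\tilde v\|_{L_V^2} = \|v\|_{L_V^2}$, giving closure under inversion and the symmetry $d(\id, \psi) = d(\id, \psi^{-1})$.

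For the metric axioms, non-negativity is immediate, and the triangle inequality refines the composition trick: using variable speeds $a, b$ with $a + b = 1$ gives $\|u\|_{L_V^2}^2 = a^{-1}\|v\|_{L_V^2}^2 + b^{-1}\|w\|_{L_V^2}^2$, whose minimum over $a + b = 1$ equals $(\|v\|_{L_V^2} + \|w\|_{L_V^2})^2$, yielding $d(\id, \phi \circ \psi) \leq d(\id, \phi) + d(\id, \psi)$. Non-degeneracy $d(\id, \psi) = 0 \Rightarrow \psi = \id$ uses the uniform continuity in Theorem \ref{th:v_admi_diffeo}: a sequence $v_n$ with $\|v_n\|_{L_V^2} \to 0$ produces flows converging uniformly to the identity, so $\psi = \id$. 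Right-invariance is by definition.

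For completeness, I would extract from a Cauchy sequence $(\psi_n)$ a subsequence with $d(\psi_{n_k}, \psi_{n_{k+1}}) < 2^{-k}$ and pick $w^{(k)} \in L_V^2$ with $\psi_1^{w^{(k)}} = \psi_{n_{k+1}} \circ \psi_{n_k}^{-1}$ and $\|w^{(k)}\|_{L_V^2} \leq 2^{-k+1}$. Gluing rescaled copies of these on intervals of length $L_k = 2^{-k}$ partitioning $[0, 1]$ yields a field $u$ with $\|u\|_{L_V^2}^2 \leq \sum_k L_k^{-1}\|w^{(k)}\|_{L_V^2}^2 \leq \sum_k 2^{k} \cdot 4 \cdot 2^{-2k} < \infty$, so $u \in L_V^2$. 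By construction, the flow $\psi_t^u$ at the endpoint of the $k$-th interval equals $\psi_{n_{k+1}} \circ \psi_{n_0}^{-1}$, so $\psi_1^u \circ \psi_{n_0} \in \mathcal{A}_V$ is the limit of the subsequence, and then of the full sequence. The main obstacle I anticipate is arranging the reparametrization so that the intermediate compositions align exactly while keeping the total $L_V^2$-norm finite, together with invoking the continuity of the flow map on bounded subsets of $L_V^2$ from the last clause of Theorem \ref{th:v_admi_diffeo} to identify the limit rigorously.
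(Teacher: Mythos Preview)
The paper does not give its own proof of this proposition; it simply records the statement and cites \cite{tro98}. Your outline---concatenation and time-reversal for the group structure, the optimized two-speed concatenation for the triangle inequality, continuity of the flow map for non-degeneracy, and the geometric gluing of a rapidly Cauchy subsequence for completeness---is precisely the standard argument from that reference (and from \cite{younes2010shapes}), and the steps you sketch are correct.
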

The deformation space $\mathcal{A}_V$ is a geodesic space:
\begin{proposition}[Existence of constant speed geodesics]
    Let $\phi,\psi \in \mathcal{A}_V$. Then there exists $v \in L_V^2$ such that $\psi_1^v = \psi \circ \phi^{-1}$ and $d(\phi,\psi) = \|v\|_{L_V^2} = \|v\|_{L_V^1}$.
\end{proposition}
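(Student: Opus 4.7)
By the right-invariance of the distance $d$, it suffices to treat the case $\phi=\id$: given any $\psi\in\cA_V$, I want to find $v\in L_V^2$ with $\psi_1^v=\psi$ and $d(\id,\psi)=\|v\|_{L_V^2}=\|v\|_{L_V^1}$. Set $d\defeq d(\id,\psi)$. The plan is to combine a time-reparametrization trick, which converts any admissible velocity into a constant-speed one without increasing its $L_V^2$-norm, with the direct method of the calculus of variations applied to a minimizing sequence in the Hilbert space $L_V^2$.

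First I would establish the reparametrization principle: for $v\in L_V^2$ with $\|v\|_{L_V^1}>0$, the change of variable $\tau(t)\defeq\|v\|_{L_V^1}^{-1}\int_0^t\|v_s\|_V\,ds$ yields $\tilde v_s\defeq v_{\tau^{-1}(s)}/\tau'(\tau^{-1}(s))$ satisfying $\psi_1^{\tilde v}=\psi_1^v$ and $\|\tilde v_s\|_V\equiv\|v\|_{L_V^1}$; consequently $\|\tilde v\|_{L_V^2}=\|\tilde v\|_{L_V^1}=\|v\|_{L_V^1}\leq\|v\|_{L_V^2}$ by Cauchy--Schwarz, with equality in the last inequality precisely when $\|v_t\|_V$ is already constant in $t$. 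Thus reparametrization never increases the $L_V^2$-norm and produces constant-speed curves at no cost, so it is enough to minimize over constant-speed admissible velocities.

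Next, take a minimizing sequence $v^n\in L_V^2$ with $\psi_1^{v^n}=\psi$ and $\|v^n\|_{L_V^2}\to d$, and replace it by its constant-speed reparametrization $\tilde v^n$; this sequence is bounded in $L_V^2$, so I may extract $\tilde v^n\rightharpoonup v^*$ weakly in this Hilbert space, and weak lower semi-continuity of the norm gives $\|v^*\|_{L_V^2}\leq d$. The hard part will be to pass to the limit in the flow equation in order to conclude $\psi_1^{v^*}=\psi$: here I would use admissibility $V\hookrightarrow\cC_0^1$ and Gronwall's inequality to obtain uniform $\cC^1$-bounds on the flows $\psi_t^{\tilde v^n}$ on compact sets, apply Arzel\`a--Ascoli to extract a subsequence converging uniformly on compacts to some continuous $\Psi$, and then pass to the limit in the integral equation $\psi_t^{\tilde v^n}=\id+\int_0^t\tilde v^n_s\circ\psi_s^{\tilde v^n}\,ds$ by combining the uniform convergence of the inner flows (to handle the composition) with the weak convergence of $\tilde v^n$ tested against the point-evaluation functionals provided by the RKHS structure of $V$. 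This identifies $\Psi_t=\psi_t^{v^*}$, hence $\psi_1^{v^*}=\psi$, and is the analytic core of the argument, already implicit in the uniform continuity statement of Theorem~\ref{th:v_admi_diffeo}.

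Finally, once $v^*$ is a minimizer so that $\|v^*\|_{L_V^2}=d$, a last application of the reparametrization principle yields $\hat v\in L_V^2$ with $\psi_1^{\hat v}=\psi$ and $\|\hat v\|_{L_V^2}=\|\hat v\|_{L_V^1}=\|v^*\|_{L_V^1}\leq d$. Since any admissible velocity generating $\psi$ must have $L_V^2$-norm at least $d$, equality holds throughout, delivering the constant-speed geodesic claimed by the proposition; the degenerate case $d=0$ (in which $\psi=\id$ and $v=0$ works) is immediate.
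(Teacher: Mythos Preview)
The paper does not actually supply a proof of this proposition: it is stated as background material in the introductory overview of LDDMM, with implicit reference to the cited sources (Trouv\'e, Glaun\`es, Younes), so there is no in-paper argument to compare against.

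Your sketch is correct and is precisely the standard argument found in those references: reduce to $\phi=\id$ by right-invariance, use the arclength reparametrization to show that the infimum over $L_V^2$-norms may be taken among constant-speed paths (for which $L_V^1$ and $L_V^2$ norms coincide), extract a weak limit in the Hilbert space $L_V^2$, and pass to the limit in the integral flow equation via the weak-to-uniform continuity of the flow map guaranteed by admissibility. Two very minor points worth tightening if you flesh this out: (i) the reparametrization map $\tau$ need not be strictly increasing when $\|v_s\|_V$ vanishes on a set of positive measure, so one should either work with a generalized right inverse or first excise those times; (ii) you invoke ``the RKHS structure of $V$'' for the continuity of point evaluations, but all that is needed (and all that is assumed in the paper) is the admissibility embedding $V\hookrightarrow\mathcal{C}_0^1$, which already makes $v\mapsto v(x)$ a bounded linear functional. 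Neither of these affects the validity of the argument.
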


\subsubsection{Kernel methods}
In applications, the admissible space $V$ is a Reproducible Kernel Hilbert Space associated with a given positive definite kernel $K$, as provided by Aronzajn's Theorem \ref{th:Aronszajn}. Very often, kernels of the form $K(x,y) = h(|x-y|)\id$ are used, where $h$ is a scalar function.
\begin{example}
    \begin{itemize}
        \item \textbf{Gaussian kernel.} Choosing the Gaussian kernel defined, for $\sigma >0$, through: $$k_{\sigma}(x,y) = \exp\left( -\frac{|x-y|^2}{2\sigma^2} \right)\,,$$ we get the Hilbert space $\mathcal{G}_{\sigma}$, which is embedded in $\mathcal{C}^k(\R^d,\R^d)$ for all $k\geq 0$. Remark that $k_{\sigma'} \in \mathcal{G}_{\sigma}$ if and only if $\sigma ' > \sigma / \sqrt{2}$, so that the velocity fields spaces $\mathcal{G}_{\sigma}$ are all distinct.
        \item \textbf{Sobolev kernel.} In general, describing the generated deformation group $\mathcal{A}_{V}$ for a given admissible space $V$ is a tricky operation. However, in the case of the Sobolev kernel, whose RKHS is the Sobolev space $H^s$, the following result was proven in \cite{bruveris2016completenessgroupsdiffeomorphisms}:
        $$\mathcal{A}_{H^s} = (\mathcal{D}_s(\R ^ d))_0\,,$$
        where $(\mathcal{D}_s(\R ^ d))_0$ is the connected component of the identity in the group of Sobolev diffeomorphisms.
    \end{itemize}
\end{example}

\subsubsection{General matching problem}

In this paragraph, $V$ is an admissible space.
\begin{definition}[General matching problem]\label{def:match_gen}
    Let $A : \mathcal{A}_{V} \to \R^+$ be a discrepancy function and $\lambda >0$. The corresponding minimal matching problem is defined as
    \begin{equation}
        \underset{\psi \in \mathcal{A}_{V}}{\min}  A(\psi)+\lambda d_V(\id,\psi)  \,,
    \end{equation}
    or in an equivalent way
    \begin{equation}
        \underset{v \in L_V^2}{\min}  A(\psi_1^v) + \lambda\int_0^1\|v_t\|_V^2  \,.
    \end{equation}
\end{definition}
The following result can be written:
\begin{theorem}[Existence of a minimizer]\label{th:gen_match_exis_min}
    Let $A$ be defined as in \ref{def:match_gen}. Let us suppose that $v \mapsto A(\psi_1^v)$ is weakly continuous from $L_2^V$ to $\R$. Then there always exists a solution to the matching problem from Definition \ref{def:match_gen}.
\end{theorem}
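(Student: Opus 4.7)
The plan is to apply the direct method of the calculus of variations on the space $L^2_V$, exploiting the fact that the regularization term $\lambda \|v\|_{L^2_V}^2$ is coercive and weakly lower semi-continuous, while the data term $v \mapsto A(\psi_1^v)$ is weakly continuous by assumption.

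First, since $A \ge 0$, the functional $F(v) \defeq A(\psi_1^v) + \lambda \|v\|_{L^2_V}^2$ is bounded below by $0$, so $\inf_{v \in L^2_V} F(v) \in [0, +\infty)$ is finite (the value at $v \equiv 0$ is a finite upper bound). Pick a minimizing sequence $(v^n) \subset L^2_V$, i.e.\ $F(v^n) \to \inf F$. From the regularization term we immediately obtain the uniform bound
\begin{equation*}
    \lambda \|v^n\|_{L^2_V}^2 \le F(v^n) \le \inf F + 1
\end{equation*}
for $n$ large enough, so $(v^n)$ is bounded in the Hilbert space $L^2_V$. By the Banach--Alaoglu theorem there exists a subsequence (not relabeled) and $v^\star \in L^2_V$ such that $v^n \rightharpoonup v^\star$ weakly in $L^2_V$.

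Next I would pass to the limit in each term. The map $v \mapsto \|v\|_{L^2_V}^2$ is convex and strongly continuous, hence weakly lower semi-continuous, giving
\begin{equation*}
    \|v^\star\|_{L^2_V}^2 \le \liminf_{n \to \infty} \|v^n\|_{L^2_V}^2.
\end{equation*}
For the data term, the assumption that $v \mapsto A(\psi_1^v)$ is weakly continuous from $L^2_V$ to $\R$ yields $A(\psi_1^{v^n}) \to A(\psi_1^{v^\star})$. Adding the two bounds gives
\begin{equation*}
    F(v^\star) \le \liminf_{n\to\infty} F(v^n) = \inf_{v \in L^2_V} F(v),
\end{equation*}
so $v^\star$ is a minimizer, and $\psi_1^{v^\star} \in \mathcal{A}_V$ is a solution of the matching problem.

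The only genuinely delicate ingredient is the weak continuity of $v \mapsto A(\psi_1^v)$, which is taken as a hypothesis here. In practice it reduces to the weak continuity of the flow map $v \mapsto \psi_1^v$ (with values in a suitable topology on diffeomorphisms), which itself follows from the admissibility of $V$ and the uniform continuity statement in Theorem \ref{th:v_admi_diffeo}: a weakly convergent and hence bounded sequence in $L^2_V$ produces equicontinuous flows, from which one extracts locally uniform convergence by Arzelà--Ascoli and identifies the limit via the integral form of the flow equation. Since this is already encoded in the hypothesis, the argument above is complete.
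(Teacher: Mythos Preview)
Your argument is correct and is exactly the standard direct method of the calculus of variations: coercivity from the regularization term, weak sequential compactness of bounded sets in the Hilbert space $L^2_V$, weak lower semi-continuity of the squared norm, and weak continuity of the data term by hypothesis. The paper in fact states this theorem without proof, treating it as a classical background result from the LDDMM literature (cf.\ the references to \cite{tro98,glaun2005,younes2010shapes}); your write-up is precisely the expected proof.
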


\subsubsection{Measure matching using a positive definite kernel.}

The previous general matching problem can be instantiated in the context of measure matching. Let $\mu_0$ be a probability measure. If $\psi_t^v$ is the $\mathcal{C^1}$ diffeomorphism from Theorem \ref{th:v_admi_diffeo}, then the time-dependent measure $\mu_t \coloneqq \psi_t^v \# \mu_0$ is a solution in the sense of distributions to the continuity equation:
\begin{equation}
    \partial_t \mu_t +  \nabla \cdot (v_t \mu_t) = 0 \,,
\end{equation}
for $t \in (0,1)$ \cite[8.1]{ambrosio2005gradient}. By a direct application of Theorem \ref{th:gen_match_exis_min}, there exist solutions to the following matching problem:
\begin{proposition}
    Let $\mu_0, \nu \in \mathcal{P}(\Omega)$ and $\mathcal{L}: \mathcal{P}(\Omega) \times \mathcal{P}(\Omega) \mapsto \R^+$ be a lower semi-continuous loss, i.e. a function such that $\mathcal{L}(\alpha,\beta) = 0$ iif $\alpha = \beta$ in $\mathcal{P}(\Omega)$. Then the application $v \in L_2^V \mapsto \mathcal{L}(\psi_1^v \# \mu_0,\nu)$ is weakly continuous, and for $\lambda > 0$ the matching problem
    \begin{equation}
        \underset{v \in L_2^V}{\min}\, E(v) \coloneqq \mathcal{L}(\psi_1^v \# \mu_0,\mu_1) + \int_0^1 \|v_t\|_V^2 dt \,
    \end{equation}
    has a solution in $L_V^2$. 
\end{proposition}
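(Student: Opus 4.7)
The plan is to establish the claim by the direct method of the calculus of variations, mirroring the proof of Theorem \ref{th:gen_match_exis_min}. That theorem is stated under a weak continuity assumption on $v \mapsto A(\psi_1^v)$, but for a merely lower semi-continuous loss $\mathcal{L}$ one only obtains weak lower semi-continuity of the composition $v \mapsto \mathcal{L}(\psi_1^v\#\mu_0,\nu)$, which is already sufficient for the direct method. I would therefore first observe that the conclusion of Theorem \ref{th:gen_match_exis_min} remains valid under this slightly weaker hypothesis.

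The heart of the argument is to show that $v \mapsto \psi_1^v\#\mu_0$ is weakly continuous from $L^2_V$ (with its weak topology) into $\mathcal{P}(\Omega)$ endowed with the weak convergence of measures. Given $v^n \rightharpoonup v$ in $L^2_V$, Banach--Steinhaus yields a uniform bound, so Theorem \ref{th:v_admi_diffeo} applies uniformly to the family of flows $(\psi^{v^n})$. The key step is to pass to the limit in the Lagrangian fixed-point equation
\[
\psi_t^{v^n}(x) \;=\; x + \int_0^t v^n_s\bigl(\psi_s^{v^n}(x)\bigr)\, ds,
\]
using the admissibility embedding $V \hookrightarrow \mathcal{C}^1_0$ together with a Grönwall-type estimate to upgrade the weak convergence of $v^n$ in $L^2_V$ to locally uniform convergence of the endpoint maps $\psi_1^{v^n} \to \psi_1^v$. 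Once this is in hand, applying dominated convergence to $\int \phi\circ\psi_1^{v^n}\,d\mu_0$ for each $\phi \in C_b(\Omega)$ yields weak convergence of the pushforwards, and composing with the lower semi-continuous $\mathcal{L}(\cdot,\nu)$ delivers the required weak lower semi-continuity.

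I would then close the argument in the classical way: take a minimizing sequence $(v^n)$ for $E$; the regularization term controls $\|v^n\|_{L^2_V}$, so up to a subsequence $v^n \rightharpoonup v^\star$ in $L^2_V$; combining weak lower semi-continuity of the Hilbert norm with the continuity step above gives $E(v^\star)\leq \liminf_n E(v^n) = \inf E$, so $v^\star$ is a minimizer.

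The main obstacle is the flow continuity step: upgrading weak convergence of velocities in $L^2_V$ to locally uniform convergence of the associated flows at time $t=1$. This is essentially the content of the ``uniformly in $v$ on bounded sets'' clause of Theorem \ref{th:v_admi_diffeo} and relies crucially on the pointwise $\mathcal{C}^1$ control provided by admissibility; once available, the pushforward step and the composition with $\mathcal{L}$ are routine.
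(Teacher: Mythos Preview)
Your proposal is correct and matches the paper's approach: the paper simply invokes Theorem~\ref{th:gen_match_exis_min} (the general existence result), and your argument is precisely the direct-method proof underlying that theorem, specialized to the pushforward action on measures via the flow-continuity step of Theorem~\ref{th:v_admi_diffeo}. Your observation that a lower semi-continuous $\mathcal{L}$ yields only weak lower semi-continuity of the composition---rather than the weak continuity asserted in the statement---is a valid refinement, and as you note this already suffices for the existence of minimizers.
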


\begin{example}\label{ex:loss_examples}
    There are multiple possible choices for $\mathcal{L}$. Some of them, that are specific to this measure context, are:
    \begin{itemize}
    \item Maximum Mean Discrepancy losses: If $K$ is a kernel, define $\mathcal{L}_K(\mu,\nu) \coloneqq \langle \mu - \nu, K \star (\mu - \nu) \rangle$. One can choose $K$ to be a Gaussian kernel, Laplacian kernel, or the Energy-Distance kernel. For more information see \cite{sriperumbudur2010hilbertspaceembeddingsmetrics}. In general, these discrepancies have a quadratic computational cost in terms of the number of points used to represent the measure $\mu$. However, for the ED kernel, an approximation of it can be reduced to a $O(N\log(N))$ computational cost, which makes it attractive.
    
    \item A natural choice is to choose the optimal transport loss $\mathcal{L}(\mu,\nu) = W_2^2(\mu,\nu)$, where $W_2$ is $L^2$ Wasserstein distance, or its entropic regularization \cite{feydy2017optimaltransportdiffeomorphicregistration,delara2022diffeomorphicregistrationusingsinkhorn}. These two losses are at the very least quadratic if not cubic.
    An advantageous loss function derived from OT ideas is the sliced Wasserstein distance, whose cost can be approximated in $O(n\log(n))$.
\end{itemize}
\end{example}

\subsection{Reduction of the optimization set.}
The optimization set $L^2([0,1],V)$ can be reduced to $L^2([0,1],(\R^d)^n)$ in the case of empirical measure matching with $n$ Dirac masses (or cloud of $n$ points).
To fix the setting, we consider $\mu = \frac 1n \sum_{i = 1}^n \delta_{x_i}$ as the source measure. Given the trajectory of the points $x_i(t) = \varphi_t(x_i)$, the minimizing vector field of the energy functional $\int_0^1 \| v_t\|^2_{V} dt $ admits a sparse representation due to the Representer Theorem for RKHS. Specifically, $v_t$ is written as $v_t(x) = \sum_{i = 1}^n p_i(t) \delta_{x_i(t)}$, where $p_i(t) \in L^2([0,1],(\R^d)^n)$. In terms of these new variables, the energy functionnal reads:
\begin{equation}
    \frac{1}{2} \int_0^1 \langle p_i(t) , K(x_i(t),x_j(t)) p_j(t)\rangle dt + \mathcal{L}_K\left(\frac 1n \sum_{i = 1}^n \delta_{x_i(1)},\nu\right)\,,
\end{equation}
under the constraints $\dot{x}_i(t) = \sum_{j = 1}^n K(x_i(t),x_j(t))p_j(t)$.
Without any further additional structure, the computational cost of this functional is quadratic in $n$. 

The goal of the following sections is to adapt the ED kernel to the LDMM framework, so that efficient $O(n\log(n))$ algorithm can be performed.
\section{Interpolation using the Energy-Distance kernel, induced metrics and regularity.}
\label{SecInterpolation}
In this section, we present results on interpolation using a conditionally positive definite kernel. The motivation is as follows: Let $\mu$ be a probability measure on $\R^d$. To transport the measure, we consider the action of a vector field $v$ whose restriction to supp$(\mu)$ is in $L^2(\mu)$, defined via
\begin{equation}
    \mu \cdot v = (\Id + v)_\#\mu \,,
\end{equation}
where $_\#$ is the push-forward operation. Here, $v$ is chosen to belong to a Hilbert space (resp. semi-Hilbert space), associated with a PD (resp. CPD) kernel. This space is constructed as the tensorization of the RKHS (resp. $\mathbb{P}$-RKSHS) with values in $\R$ associated with the kernel. 
In the two following subsections, we present the theory of interpolation theory for the case of scalar values.

\subsection{Interpolation using a CPD kernel.}

In this paragraph, we present the theory of interpolation using a positive definite kernel, and how it extends to conditionally positive definite (CPD) kernels \cite{auffray_pdcpker_2009}.

\subsubsection{Case of a positive definite kernel.}

Let $K : \Omega \times \Omega \rightarrow \R$ be a symmetric positive definite kernel, meaning that for any $(x_1, \dots, x_n) \in \Omega^n$ and $(c_1, \dots, c_n) \in \R^n $, we have
\begin{equation}
    \sum_{i,j = 1}^n c_i c_j K(x_i,x_j) \geq 0 \,,
\end{equation}
with equality if and only if $c_1 = \dots = c_n = 0$. Define the partial function $K_x : y \in \Omega \mapsto K(x,y)$, and let $\mathcal{F}_K$ be the space of linear combinations of functions in the set $\{K_x ; x\in \Omega\}$.
The inner product 
\begin{equation}
    \langle \lambda_i K_{x_i} , \beta_j K_{y_i} \rangle_{\mathcal{F}_K} \coloneqq \sum_{i,j} \lambda_i \beta_j K(x_i,y_j)
\end{equation}
defines a symmetric positive bilinear form on $\mathcal{F}_K$.
Now, an implication of Aronszajn's famous theorem \cite{aronzajn1950} states:
\begin{theorem}[Aronszajn]\label{th:Aronszajn}
    \begin{enumerate}
        \item The bilinear form $\langle \cdot , \cdot \rangle_{\mathcal{F}_K}$ is positive definite.
        \item There exists a unique Hilbert space of functions $\mathcal{H}_K$ such that
        {\begin{enumerate}
            \item $(\mathcal{F}_K,\langle \cdot , \cdot \rangle_{\mathcal{F}_K})$ is a pre-Hilbert subspace of $\mathcal{H}_K$ \,.
            \item (Reproducing property) $\forall f \in \mathcal{H}_K, x \in \Omega, f(x) = \langle f, K_x \rangle_{\mathcal{H}_K} \,.$
        \end{enumerate}}
    \end{enumerate}
    The space $\mathcal{H}_K$ is called the Reproducible Kernel Hilbert Space associated with $K$.
\end{theorem}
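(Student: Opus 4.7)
The plan is to verify the two items of the theorem in order: first that the bilinear form is actually positive definite (not merely positive), then to construct the Hilbert space $\mathcal{H}_K$ as the completion of $\mathcal{F}_K$, realize it as a space of functions on $\Omega$ via pointwise evaluation, and finally argue uniqueness.

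Before starting, I would spend one short paragraph noting that the bilinear form is well-defined, i.e.\ independent of the chosen decomposition of elements of $\mathcal{F}_K$. This follows from the identity
\begin{equation}
  \Bigl\langle \sum_i \lambda_i K_{x_i}, \sum_j \beta_j K_{y_j}\Bigr\rangle_{\mathcal{F}_K}
  = \sum_i \lambda_i \Bigl(\sum_j \beta_j K_{y_j}\Bigr)(x_i)
  = \sum_j \beta_j \Bigl(\sum_i \lambda_i K_{x_i}\Bigr)(y_j),
\end{equation}
so $\langle f,g\rangle_{\mathcal{F}_K}$ only depends on the functions $f$ and $g$. This already gives a symmetric positive bilinear form by assumption on $K$; to upgrade positivity to definiteness I would apply Cauchy--Schwarz on $\mathcal{F}_K$ (valid for any positive bilinear form): for $f \in \mathcal{F}_K$,
\begin{equation}
  |f(x)|^2 = |\langle f, K_x\rangle_{\mathcal{F}_K}|^2 \leq \langle f,f\rangle_{\mathcal{F}_K} \cdot K(x,x).
\end{equation}
Hence $\langle f,f\rangle_{\mathcal{F}_K} = 0$ forces $f \equiv 0$ as a function on $\Omega$, giving (1).

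For (2), the strategy is to take the abstract completion $\widehat{\mathcal{H}}$ of the pre-Hilbert space $(\mathcal{F}_K,\langle\cdot,\cdot\rangle_{\mathcal{F}_K})$ and then identify each equivalence class of Cauchy sequences $(f_n)$ with a genuine function on $\Omega$. The key estimate is the same Cauchy--Schwarz bound used above:
\begin{equation}
  |f_n(x) - f_m(x)| = |\langle f_n - f_m, K_x\rangle_{\mathcal{F}_K}| \leq \|f_n - f_m\|_{\mathcal{F}_K}\sqrt{K(x,x)},
\end{equation}
so for every $x \in \Omega$ the sequence $(f_n(x))$ is Cauchy in $\R$, and its pointwise limit $f(x)$ depends only on the equivalence class of $(f_n)$ (again by the same estimate applied to two equivalent Cauchy sequences). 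This yields a linear embedding $\widehat{\mathcal{H}} \hookrightarrow \R^\Omega$, and I would check injectivity by showing that if $f(x) = 0$ for all $x$, then $\langle f, g\rangle_{\widehat{\mathcal{H}}} = 0$ for every $g \in \mathcal{F}_K$ by continuity of $\langle\cdot, K_x\rangle$ and density of $\mathcal{F}_K$, hence $f = 0$. Define $\mathcal{H}_K$ as the image. The reproducing property $f(x) = \langle f, K_x\rangle_{\mathcal{H}_K}$ then follows by passing to the limit in $f_n(x) = \langle f_n, K_x\rangle_{\mathcal{F}_K}$, and (a) is built in by construction.

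For uniqueness, suppose $\mathcal{H}'$ is another Hilbert space satisfying (a) and (b). Then the reproducing property forces $K_x \in \mathcal{H}'$ and the inner product on span$\{K_x\}= \mathcal{F}_K$ to coincide with $\langle\cdot,\cdot\rangle_{\mathcal{F}_K}$. Moreover $\mathcal{F}_K$ is dense in $\mathcal{H}'$: any $g \in \mathcal{H}'$ orthogonal to every $K_x$ satisfies $g(x) = \langle g, K_x\rangle_{\mathcal{H}'} = 0$ for every $x$, hence $g = 0$. By completeness, $\mathcal{H}'$ is isometric to the completion of $\mathcal{F}_K$ under $\langle\cdot,\cdot\rangle_{\mathcal{F}_K}$, i.e.\ to $\mathcal{H}_K$, via the canonical map, and this isometry is the identity on functions because both spaces embed into $\R^\Omega$ via pointwise evaluation.

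The main delicate step I expect is the realization of the abstract completion as an actual space of \emph{functions}: one must verify both that the pointwise limit exists independently of the Cauchy representative and that the resulting map from $\widehat{\mathcal{H}}$ to $\R^\Omega$ is injective (no ``ghost'' elements whose pointwise evaluation is identically zero). Everything else is either a direct application of Cauchy--Schwarz or a density argument using the reproducing property.
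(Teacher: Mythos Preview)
Your proof is correct and follows the standard construction of the RKHS. Note that the paper does not actually give its own proof of this theorem: it simply cites Aronszajn's original 1950 paper and states the result, so there is no in-paper argument to compare against. What you have written is essentially the classical proof one would find in Aronszajn's paper or any standard reference.
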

In spline interpolation, the goal is to approximate a function $f \in \mathcal{H}_K$ that is known on a finite design set $X = \{x_1, \dots, x_N\} \subset \Omega$ using a finite-dimensional approximation. More precisely, let $\mathcal{F}_K(X)$ the space of finite linear combinations of the functions $\{K_x \,|\, x\in X\}$. The previous theorem leads to the following result:
\begin{proposition}[Interpolators in $\mathcal{H}_K$]
    Let $X$ be a finite subset of $\Omega$ and $f \in \mathcal{H}_K$. Then, the problem
    \begin{equation}
        \underset{g \in \mathcal{F}_K(X)}{\min}\,\|f-g\|_{\mathcal{H}_K}
    \end{equation}
    has a unique solution, given by $g = S_{K,X}(f)$, where $S_{K,X}$ is the orthogonal projection on $\mathcal{F}_K(X)$. Moreover, $S_{K,X}(f)$ is the interpolator of $f$ on $X$ with minimal $\mathcal{H}_K$ norm.
\end{proposition}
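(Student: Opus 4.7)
The plan is to exploit that $\mathcal{F}_K(X)$ is finite-dimensional (spanned by $\{K_{x_1}, \dots, K_{x_N}\}$), hence a closed subspace of the Hilbert space $\mathcal{H}_K$. By the standard Hilbert projection theorem, the orthogonal projection $S_{K,X}: \mathcal{H}_K \to \mathcal{F}_K(X)$ is well-defined, and $g = S_{K,X}(f)$ is the unique element of $\mathcal{F}_K(X)$ minimizing $\|f - g\|_{\mathcal{H}_K}$. This handles the first claim directly from abstract Hilbert space theory.

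For the interpolation property, I will use the reproducing property crucially. Writing $r \coloneqq f - S_{K,X}(f)$, orthogonality to $\mathcal{F}_K(X)$ means $\langle r, K_{x_i}\rangle_{\mathcal{H}_K} = 0$ for every $x_i \in X$. By the reproducing property $r(x_i) = \langle r, K_{x_i}\rangle_{\mathcal{H}_K}$, so $r(x_i) = 0$, i.e. $S_{K,X}(f)(x_i) = f(x_i)$ for all $i$. Thus $S_{K,X}(f)$ interpolates $f$ on $X$.

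For the minimal-norm statement among \emph{all} interpolators in $\mathcal{H}_K$ (not just elements of $\mathcal{F}_K(X)$), let $h \in \mathcal{H}_K$ be any function with $h(x_i) = f(x_i)$ for every $i$. Then $h - S_{K,X}(f)$ vanishes on $X$; using the reproducing property a second time, this means $\langle h - S_{K,X}(f), K_{x_i}\rangle_{\mathcal{H}_K} = 0$ for each $i$, hence $h - S_{K,X}(f) \perp \mathcal{F}_K(X)$ since $\{K_{x_i}\}$ spans it. As $S_{K,X}(f) \in \mathcal{F}_K(X)$, Pythagoras' identity yields
\begin{equation*}
\|h\|_{\mathcal{H}_K}^2 = \|S_{K,X}(f)\|_{\mathcal{H}_K}^2 + \|h - S_{K,X}(f)\|_{\mathcal{H}_K}^2 \,\geq\, \|S_{K,X}(f)\|_{\mathcal{H}_K}^2 \,,
\end{equation*}
with equality only when $h = S_{K,X}(f)$, giving both the minimality and its uniqueness.

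No part of this argument is really delicate; the only thing to double-check is that $\mathcal{F}_K(X)$ is indeed closed, which follows from finite-dimensionality, and that Aronszajn's theorem guarantees both the reproducing property and the fact that $\mathcal{F}_K \subset \mathcal{H}_K$ isometrically so that inner products on the span of $\{K_{x_i}\}$ computed with the kernel matrix coincide with the $\mathcal{H}_K$ inner product. The mild subtlety worth flagging is that the ``interpolator of minimal norm'' assertion concerns competitors in the whole space $\mathcal{H}_K$, not just in $\mathcal{F}_K(X)$, which is why the second Pythagoras step (rather than the first projection step) is what delivers the result.
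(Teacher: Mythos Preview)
Your proof is correct and follows the standard route the paper implicitly invokes: the paper simply states that this proposition is a direct consequence of Aronszajn's theorem and does not spell out a proof, so your argument via the Hilbert projection theorem on the finite-dimensional subspace $\mathcal{F}_K(X)$, the reproducing property to obtain interpolation, and Pythagoras for minimality is exactly what is being alluded to.
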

An direct consequence of this result is the important Representer Theorem, which applies to regularized regression:
\begin{proposition}[Representer Theorem]\label{prop:rep_th_pd_case}
    Let $X = (x_1, \dots, x_n) \in \Omega^n$ and $Y = (y_1, \dots, y_n) \in \R^n$. Then, the solution to the regularized regression problem
    \begin{equation}
        \underset{f \in \mathcal{H}_K}{\min}\,\sum_{i=1}^n (f(x_i) - y_i)^2 + \lambda \|f\|^2_{\mathcal{H}_K}
    \end{equation}
    lies in $\mathcal{F}_K(X)$. Moreover, it is given by
    \begin{equation}
        f = \sum_{j=1}^n \gamma_j K_{x_j}\,,
    \end{equation}
    where $\gamma = (\gamma_1, \dots, \gamma_n)$ is obtained as 
    \begin{equation}
        \gamma = (K_X + \lambda \Id_n)^{-1}Y \,.
    \end{equation}
\end{proposition}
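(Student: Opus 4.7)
The plan is the classical two-step Representer-Theorem argument: first reduce from the infinite-dimensional space $\cH_K$ to the finite-dimensional subspace $\cF_K(X)$ by an orthogonality argument, then solve the resulting finite quadratic problem explicitly.

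\textbf{Step 1: Reduction to $\cF_K(X)$.} Since $X$ is finite, $\cF_K(X)$ is a finite-dimensional, hence closed, subspace of $\cH_K$. Any $f \in \cH_K$ therefore decomposes orthogonally as $f = f_\parallel + f_\perp$ with $f_\parallel \in \cF_K(X)$ and $\langle f_\perp , g \rangle_{\cH_K} = 0$ for every $g \in \cF_K(X)$. Applying the reproducing property of \Cref{th:Aronszajn} at each design point and using that $K_{x_i} \in \cF_K(X)$, I get
\begin{equation*}
    f(x_i) \;=\; \langle f , K_{x_i} \rangle_{\cH_K} \;=\; \langle f_\parallel , K_{x_i} \rangle_{\cH_K} + \langle f_\perp , K_{x_i} \rangle_{\cH_K} \;=\; f_\parallel(x_i)\,,
\end{equation*}
so the data term depends only on $f_\parallel$. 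Pythagoras gives $\|f\|_{\cH_K}^2 = \|f_\parallel\|_{\cH_K}^2 + \|f_\perp\|_{\cH_K}^2 \geq \|f_\parallel\|_{\cH_K}^2$, with equality iff $f_\perp = 0$. Since $\lambda > 0$, the objective evaluated at $f$ is strictly greater than that at $f_\parallel$ whenever $f_\perp \neq 0$, and every minimizer must therefore lie in $\cF_K(X)$.

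\textbf{Step 2: Explicit resolution.} I then parameterize $f = \sum_{j=1}^n \gamma_j K_{x_j}$ with $\gamma \in \R^n$. Writing $K_X \in \R^{n\times n}$ for the Gram matrix $(K(x_i,x_j))_{ij}$, the reproducing property yields $f(x_i) = (K_X \gamma)_i$ and $\|f\|_{\cH_K}^2 = \gamma^\top K_X \gamma$. The problem reduces to
\begin{equation*}
    \min_{\gamma \in \R^n}\; \|K_X \gamma - Y\|^2 + \lambda \, \gamma^\top K_X \gamma\,,
\end{equation*}
whose first-order optimality condition reads $K_X\bigl((K_X + \lambda \Id_n)\gamma - Y\bigr) = 0$. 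Because $K$ is positive definite and the $x_i$ are distinct, $K_X$ is invertible, so the condition simplifies to $(K_X + \lambda \Id_n)\gamma = Y$. The matrix $K_X + \lambda \Id_n$ is symmetric positive definite (even if $K_X$ were only positive semi-definite, $\lambda > 0$ would secure invertibility), hence $\gamma = (K_X + \lambda \Id_n)^{-1} Y$ is well-defined, and this $\gamma$ is the unique minimizer by strict convexity of the objective in $\gamma$.

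The proof is essentially routine once the orthogonal-decomposition trick of Step 1 is in hand, which is the only nontrivial ingredient; the finite-dimensional linear algebra of Step 2 is mechanical. The mild subtlety is justifying invertibility of $K_X + \lambda \Id_n$ without assuming strict definiteness of $K_X$, which I handle by relying on $\lambda > 0$.
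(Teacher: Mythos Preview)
Your proof is correct and follows exactly the standard route the paper has in mind: the paper states this Representer Theorem as a ``direct consequence'' of the preceding interpolation result (orthogonal projection onto $\cF_K(X)$), without spelling out the details, and your Step~1 is precisely that projection argument made explicit, followed by the routine finite-dimensional computation in Step~2.
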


\subsubsection{Case of a $\mathbb{P}$ conditionally positive definite kernel.}

Now, we extend the previous results to the case of conditionally positive definite kernels. This exposition is mainly based on \cite{auffray_pdcpker_2009,Wendland_2004}.
\begin{definition}[$\mathbb{P}$-CPD kernels, point-wise version]\label{defi:pcpd_ker}
    Let $K : \Omega  \times \Omega \rightarrow \R$ be a symmetric kernel and $\mathbb{P} \subset \R^\Omega$ be a finite-dimensional function space. The kernel $K$ is said to be $\mathbb{P}$-conditionally positive definite if, for any distinct points $x_1, \dots, x_N \in \Omega$, $c_1, \dots c_N \in \R$ satisfying
    \begin{equation}
        \forall p \in \mathbb{P},\, \sum_{i=1}^n c_i p(x_i) = 0 \,,
    \end{equation}
    the following inequality holds
    \begin{equation}
        \sum_{i,j = 1}^n c_i c_j K(x_i,x_j) \geq 0
    \end{equation}
    with an equality if and only if $c_1 = \dots = c_n = 0$.
\end{definition}
\begin{example}
    \begin{itemize}
        \item The TPS kernels \
    \end{itemize}
\end{example}
Let us define the measure space
\begin{equation}
    \mathcal{M}_{\mathbb{P}} \coloneqq \{\gamma \in \mathcal{M}(\Omega) \,|\, \forall p \in \mathbb{P},\gamma(p) = 0\}\,,
\end{equation}
as well as the function space, for a finite set $X$:
\begin{equation}
    \mathcal{F}_{K,\mathbb{P}}(X) \coloneqq \{K\star\gamma \,|\, \gamma \in \mathcal{M}_{\mathbb{P}}\cap \{\text{supp}(\gamma) = X\}\}\,,
\end{equation}
and 
\begin{equation}
    \mathcal{F}_{K,\mathbb{P}} \coloneqq \bigcap_{\underset{X \text{is finite}}{X \subset \Omega}} \mathcal{F}_{K,\mathbb{P}}(X) \,.
\end{equation}
By Definition \ref{defi:pcpd_ker}, the bilinear form 
\begin{equation}
    \langle \cdot, \cdot \rangle_{\mathcal{F}_{K,\mathbb{P}}} : \left(K \star \alpha = \sum_i \alpha_i K_{x_i},K \star \beta = \sum_j \beta_j K_{y_j}\right) \in \mathcal{F}_{K,\mathbb{P}}^2 \mapsto \sum_{i,j} \alpha_i \beta_j K(x_i,y_j) \in \R
\end{equation}
is positive definite on $\mathcal{F}_{K,\mathbb{P}}$.
To generalize Aronszajn's result in this setting \cite{auffray_pdcpker_2009}, we need the following definition:
\begin{definition}
    A vector space $\mathcal{N}$ equipped with a symmetric positive bilinear form $\langle \cdot, \cdot \rangle_\mathcal{N}$ is said to be a semi-Hilbert if $\mathcal{N}/\mathcal{K}$ is a Hilbert space, where $\mathcal{K}$ is the null-space of $\langle \cdot, \cdot \rangle_\mathcal{N}$.
\end{definition}
Now, the generalized Aronszajn's Theorem from \cite[Theorem 4.1]{auffray_pdcpker_2009} states:
\begin{theorem}
    Let $K$ be a $\mathbb{P}$-CPD kernel. Then, there exists a unique semi-Hilbert space $(\mathcal{H}_{K,\mathbb{P}},\langle \cdot, \cdot \rangle_{\mathcal{H}_{K,\mathbb{P}}})$ such that
    \begin{enumerate}
        \item The space $(\mathcal{F}_{K,\mathbb{P}},\langle \cdot, \cdot \rangle_{\mathcal{F}_{K,\mathbb{P}}} )$ is a pre-Hilbert subspace of $(\mathcal{H}_{K,\mathbb{P}},\langle \cdot, \cdot \rangle_{\mathcal{H}_{K,\mathbb{P}}})$.
        \item We have the direct sum decomposition $\mathcal{H}_{K,\mathbb{P}} = \mathbb{P} \oplus \mathcal{F}_{K,\mathbb{P}}$.
    \end{enumerate}
\end{theorem}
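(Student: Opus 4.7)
The plan is to adapt Aronszajn's classical construction, with the essential modification that the reproducing property holds only after pairing with test measures that annihilate $\mathbb{P}$. The starting observation is the \emph{weak reproducing identity}: for $f = K \star \alpha \in \mathcal{F}_{K,\mathbb{P}}$ and any finitely supported $\gamma \in \mathcal{M}_{\mathbb{P}}$,
\begin{equation}
\langle \gamma, f \rangle \;=\; \sum_{i,j} \alpha_i \gamma_j K(x_i, y_j) \;=\; \langle K \star \gamma, K \star \alpha \rangle_{\mathcal{F}_{K,\mathbb{P}}}.
\end{equation}
This identity shows that $\langle \cdot, \cdot \rangle_{\mathcal{F}_{K,\mathbb{P}}}$ is well-defined (independent of the chosen representation of $f$), and positive definiteness is then precisely the $\mathbb{P}$-CPD hypothesis. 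Hence $(\mathcal{F}_{K,\mathbb{P}}, \langle \cdot, \cdot \rangle_{\mathcal{F}_{K,\mathbb{P}}})$ is a genuine pre-Hilbert space.

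I would next complete $\mathcal{F}_{K,\mathbb{P}}$ abstractly and realize the completion as a concrete space of functions. Fix a $\mathbb{P}$-unisolvent set $Z = \{z_1, \ldots, z_m\} \subset \Omega$ with associated Lagrange basis $\ell_1, \ldots, \ell_m \in \mathbb{P}$ satisfying $\ell_k(z_l) = \delta_{kl}$. For each $x \in \Omega$, the signed measure $\gamma_x \coloneqq \delta_x - \sum_k \ell_k(x) \delta_{z_k}$ lies in $\mathcal{M}_{\mathbb{P}}$, and the weak reproducing identity combined with Cauchy--Schwarz yields, for any Cauchy sequence $(f_n) \subset \mathcal{F}_{K,\mathbb{P}}$,
\begin{equation}
\Bigl| (f_n - f_m)(x) - \sum_{k=1}^m \ell_k(x) (f_n - f_m)(z_k) \Bigr| \;\leq\; \|K \star \gamma_x\|_{\mathcal{F}_{K,\mathbb{P}}} \, \|f_n - f_m\|_{\mathcal{F}_{K,\mathbb{P}}}.
\end{equation}
Subtracting from each $f_n$ its unique polynomial interpolant on $Z$, the corrected sequence converges pointwise to a limit $g$ vanishing on $Z$. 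This produces a linear isometry from the abstract completion of $\mathcal{F}_{K,\mathbb{P}}$ onto a concrete Hilbert space $\mathcal{G}$ of functions vanishing on $Z$. The hard part is this realization step: one must verify that the pointwise limit depends only on the equivalence class of the Cauchy sequence and that the resulting map is injective, both of which follow by applying the weak reproducing identity to $\gamma_x$ and passing to the limit.

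With $\mathcal{G}$ in hand, I would set
\begin{equation}
\mathcal{H}_{K,\mathbb{P}} \;\coloneqq\; \mathbb{P} \oplus \mathcal{G}, \qquad \langle p_1 + g_1, p_2 + g_2 \rangle_{\mathcal{H}_{K,\mathbb{P}}} \coloneqq \langle g_1, g_2 \rangle_{\mathcal{G}}.
\end{equation}
The null space of this form is exactly $\mathbb{P}$ and $\mathcal{H}_{K,\mathbb{P}} / \mathbb{P} \simeq \mathcal{G}$ is Hilbert, so $\mathcal{H}_{K,\mathbb{P}}$ is semi-Hilbert. Property (2) is built in. For property (1), each $f \in \mathcal{F}_{K,\mathbb{P}}$ is identified with $\pi_{\mathbb{P}}(f) + (f - \pi_{\mathbb{P}}(f)) \in \mathbb{P} \oplus \mathcal{G}$, where $\pi_{\mathbb{P}}(f) \coloneqq \sum_k f(z_k) \ell_k$ is the Lagrange interpolant on $Z$; since the $\mathbb{P}$-summand is null, the inherited semi-inner product coincides with $\langle \cdot, \cdot \rangle_{\mathcal{F}_{K,\mathbb{P}}}$.

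Finally, for uniqueness, if $(\mathcal{H}', \langle \cdot, \cdot \rangle')$ satisfies (1) and (2), the weak reproducing identity forces $\langle \cdot, \cdot \rangle'$ to agree with $\langle \cdot, \cdot \rangle_{\mathcal{F}_{K,\mathbb{P}}}$ on $\mathcal{F}_{K,\mathbb{P}}$; the quotient $\mathcal{H}'/\mathbb{P}$ is then constrained to be the abstract completion of the latter, hence isometric to $\mathcal{G}$, and splitting via Lagrange interpolation on $Z$ identifies $\mathcal{H}'$ with $\mathcal{H}_{K,\mathbb{P}}$.
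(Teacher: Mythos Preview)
The paper does not supply its own proof of this statement; it is quoted from \cite[Theorem 4.1]{auffray_pdcpker_2009}. Your construction is the standard one from that literature: use a $\mathbb{P}$-unisolvent set $Z$ to build the anchored evaluation measures $\gamma_x = \delta_x - \sum_k \ell_k(x)\,\delta_{z_k} \in \mathcal{M}_{\mathbb{P}}$, apply the weak reproducing identity to control pointwise limits of Cauchy sequences, realize the abstract completion of $\mathcal{F}_{K,\mathbb{P}}$ as a concrete function space $\mathcal{G}$ of functions vanishing on $Z$, and adjoin $\mathbb{P}$ as the null summand. The argument is correct and matches the approach of the cited source.

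One remark worth making explicit: property (2) as literally written would force $\mathcal{F}_{K,\mathbb{P}}$ itself to be complete (since $\mathcal{H}_{K,\mathbb{P}}/\mathbb{P}$ must be Hilbert), which fails when $\mathcal{F}_{K,\mathbb{P}}$ denotes only the space of finite combinations $K\star\gamma$. Your construction rightly interprets the direct summand as the completion $\mathcal{G}$, and the Lagrange-interpolation splitting $f \mapsto \pi_{\mathbb{P}}(f) + (f - \pi_{\mathbb{P}}(f))$ that you describe then gives the isometric embedding of $\mathcal{F}_{K,\mathbb{P}}$ into $\mathbb{P}\oplus\mathcal{G}$ required for property (1). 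This is exactly how the native-space construction is carried out in the reference.
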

To study the interpolation problem through a $\mathbb{P}$-CPD kernel, we need to introduce the following definition:
\begin{definition}[$\mathbb{P}$-unisolvent sets]
    A finite set $E = \{\omega_1,\dots,\omega_l\} \subset \R^d$ of $l$ distinct points is said to be $\mathbb{P}$-unisolvent if the application
    \begin{equation}
        I_E: \begin{array}{ccl}
                \mathbb{P} &\to& \R^l \\
                p &\mapsto& (p(\omega_1), \dots, p(\omega_l))
            \end{array}
    \end{equation}
    is injective. If it is a bijection, $E$ is said to be a \textit{minimal} $\mathbb{P}$-unisolvent set. In that case remark that $\dim \mathbb{P} = l$.
\end{definition}
\begin{remark}
    If $X = \{ x_1, \dots, x_n \}$ contains a $\mathbb{P}$-unisolvent set and $(p_1,\dots,p_l)$ is a $\mathbb{P}$ basis, the application
    \begin{equation}
        P_X^T: \begin{array}{ccl}
                \R^n &\to& \R^l \\
                \gamma &\mapsto& \left( \sum_{k=1}^n \gamma_k p_i(x_k), i\in \llbracket 1,l\rrbracket \right)
            \end{array}
    \end{equation}
    is a surjection.
\end{remark}
\begin{example}
    Let us introduce $\mathcal{P}_m$ the space of polynomial functions with degree at most $m$.
    \begin{itemize}
        \item If $\mathbb{P} = \mathcal{P}_0$, then any non-empty set is $\mathcal{P}_0$-unisolvent, and the minimal ones are the singletons.
        \item If $\mathbb{P} = \mathcal{P}_1$, in $\R^d$, a set of points contains a $\mathcal{P}_1$-unisolvent set if and only if it is not contained in a hyperplane.
        \item If $\mathbb{P} = \mathcal{P}_m$ with $m \leq d-1$ , then a set $E$ fails to be $\mathcal{P}_m$-unisolvent if and only if there exists a nonzero polynomial $p \in \mathcal{P}_m$ such $p(\omega) = 0$, for all $\omega \in E$. The null set of $p$ is an algebraic hypersurface of Lebesgue measure zero, and the union of such sets as $p$ ranges over $\mathcal{P}_m$ still has zero Lebesgue measure.
    \end{itemize}
\end{example}
This allows to write the following result about interpolation in $\mathcal{H}_{K,\mathbb{P}}$:
\begin{proposition}\label{prop:exact_matching_cpd}
    Let $X$ be a finite subset of $\Omega$ containing a $\mathbb{P}$-unisolvent set. Then the problem
    \begin{equation}
        \underset{g \in \mathbb{P} \oplus \mathcal{F}_{K,\mathbb{P}}(X)}{\min}\,\|f-g\|_{\mathcal{H}_{K,\mathbb{P}}}
    \end{equation}
    has a unique solution that interpolates $f$ on $X$, denoted by $g = S_{K,\mathbb{P},X}(f)$. Moreover, $S_{K,\mathbb{P},X}(f)$ is the interpolator of $f$ on $X$ with minimal $\mathcal{H}_{K,\mathbb{P}}$ semi-norm.
\end{proposition}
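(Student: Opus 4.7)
The plan is to mirror the classical RKHS representer argument of Proposition \ref{prop:rep_th_pd_case} via a Pythagorean decomposition, but with the CPD-specific twist that orthogonality must be checked separately against the null space $\mathbb{P}$ of the semi-inner product and against $\mathcal{F}_{K,\mathbb{P}}(X)$. The first step is to build a concrete interpolator $g_0 \in V \coloneqq \mathbb{P} \oplus \mathcal{F}_{K,\mathbb{P}}(X)$. Writing $X = \{x_1,\dots,x_n\}$, choosing a basis $(p_1,\dots,p_l)$ of $\mathbb{P}$, and parametrizing $g = \sum_k \beta_k p_k + \sum_i \gamma_i K_{x_i}$ with $\gamma \in \mathcal{M}_{\mathbb{P}}$ supported on $X$, the interpolation equations $g(x_j) = f(x_j)$ become the saddle-point system
\[
\begin{pmatrix} K_X & P_X \\ P_X^T & 0 \end{pmatrix}\begin{pmatrix}\gamma\\ \beta\end{pmatrix}=\begin{pmatrix}(f(x_j))_j\\ 0\end{pmatrix},
\]
where $K_X = (K(x_i,x_j))_{i,j}$ and $P_X = (p_k(x_i))_{i,k}$. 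The $\mathbb{P}$-CPD hypothesis (Definition \ref{defi:pcpd_ker}) makes $K_X$ positive definite on $\ker P_X^T$, while the $\mathbb{P}$-unisolvence of $X$ forces $P_X$ to have full column rank. Projecting the first equation onto $\ker P_X^T$ annihilates the $P_X\beta$ contribution and produces an invertible system for $\gamma \in \ker P_X^T$; the residual $(f(x_j))_j - K_X\gamma$ then lies in $\mathrm{range}\,P_X$ and pins down $\beta$ uniquely.

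The second step is the key orthogonality: $f - g_0$ is perpendicular to every element of $V$ for the semi-inner product. The main tool is the reproducing-type identity
\[
\langle K\star\gamma,\, h\rangle_{\mathcal{H}_{K,\mathbb{P}}} = \sum_i \gamma_i\, h(x_i), \qquad \gamma \in \mathcal{M}_{\mathbb{P}},\ h \in \mathcal{H}_{K,\mathbb{P}},
\]
which holds on $\mathcal{F}_{K,\mathbb{P}}$ by the very definition of $\langle\cdot,\cdot\rangle_{\mathcal{F}_{K,\mathbb{P}}}$, holds on $\mathbb{P}$ because $\gamma$ annihilates $\mathbb{P}$, and extends to $\mathcal{H}_{K,\mathbb{P}}$ by density. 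Combined with $\langle p, h\rangle_{\mathcal{H}_{K,\mathbb{P}}} = 0$ for $p \in \mathbb{P}$ (semi-inner-product Cauchy--Schwarz), this yields, for any $v = q + K\star\delta \in V$,
\[
\langle f - g_0,\, v\rangle_{\mathcal{H}_{K,\mathbb{P}}} = \sum_i \delta_i\bigl(f(x_i)-g_0(x_i)\bigr) = 0.
\]

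With this orthogonality in hand, Pythagoras concludes. For any $g \in V$ one has $\|f - g\|^2 = \|f - g_0\|^2 + \|g - g_0\|^2$, so $g_0$ minimizes $g\mapsto \|f-g\|$ over $V$, and any other minimizer satisfies $g - g_0 \in \mathbb{P}$; adding the interpolation requirement gives $(g-g_0)|_X = 0$, and the $\mathbb{P}$-unisolvence of $X$ then forces $g = g_0$, proving uniqueness. The same orthogonality, applied to an arbitrary $f' \in \mathcal{H}_{K,\mathbb{P}}$ interpolating $f$ on $X$, yields $\|f'\|^2 = \|g_0\|^2 + \|f'-g_0\|^2 \geq \|g_0\|^2$, which is the minimum semi-norm statement. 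The main obstacle is the reproducing identity of the second step: once it is available the rest is finite-dimensional linear algebra and Pythagoras, but establishing it cleanly requires handling the two summands of the direct-sum decomposition $\mathcal{H}_{K,\mathbb{P}} = \mathbb{P} \oplus \mathcal{F}_{K,\mathbb{P}}$ separately before extending by density.
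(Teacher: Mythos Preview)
The paper does not supply its own proof of this proposition; it is stated as background material imported from \cite{auffray_pdcpker_2009}, so there is nothing to compare against. Your argument is the standard one and is correct: the saddle-point system is uniquely solvable because $K_X$ is definite on $\ker P_X^T$ (CPD) and $P_X$ has full column rank (unisolvence), the reproducing identity $\langle K\star\gamma,h\rangle=\sum_i\gamma_i h(x_i)$ for $\gamma\in\mathcal{M}_{\mathbb P}$ gives the needed orthogonality, and Pythagoras does the rest. One small clarification worth making explicit: the minimization problem over $\mathbb{P}\oplus\mathcal{F}_{K,\mathbb{P}}(X)$ does \emph{not} have a unique minimizer in the usual sense---the full minimizing set is $g_0+\mathbb{P}$---so the ``unique solution that interpolates $f$ on $X$'' must be read as the unique element of $\{\text{minimizers}\}\cap\{\text{interpolators}\}$, which is exactly how you argue it; stating this unambiguously would tighten the write-up.
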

A Representer Theorem, analogous to Proposition \ref{prop:rep_th_pd_case} can be proven:
\begin{proposition}\label{prop:reg_pcpd}
    Let $X = (x_1, \dots, x_n) \in \Omega^n$ be a collection of $n$ distinct points containing a $\mathbb{P}$-unisolvent set, and let $Y = (y_1, \dots, y_N) \in \R^n$. Consider the following regularized regression problem
    \begin{equation}
        \underset{f \in \mathcal{H}_{K,\mathbb{P}}}{\min}\sum_{i=1}^n (f(x_i) - y_i)^2 + \lambda \|f\|^2_{\mathcal{H}_{K,\mathbb{P}}}\,,
    \end{equation}
    where $\lambda >0$. Then, its solution lies in $\mathbb{P}\oplus\mathcal{F}_{K,\mathbb{P}}(X)$. 
    Let $(p_1, \dots, p_l)$ be a $\mathbb{P}$ basis, $K_X$ the $n \times n$ matrix with entries $K(x_i,x_j)$ and $P_X$ the $n \times l$ matrix with entries $p_j(x_i)$. The solution $f$ has the explicit form
    \begin{equation}
        f = \sum_{i=1}^n \gamma_i K_{x_i} + \sum_{j=1}^l \alpha_j p_j\,,
    \end{equation}
    where $\gamma = (\gamma_1, \dots, \gamma_n)$ and $\alpha = (\alpha_1, \dots, \alpha_l)$ are given by
    \begin{equation}
        \begin{cases}
            \gamma &=  (K_X + \lambda \Id_n)^{-1}(Y-P_X \alpha) \,,\\
            \alpha &= (P_X^T(K_X+\lambda \Id_n)^{-1}P_X)^{-1}P_X^T(K_X + \lambda \Id_n)^{-1}Y \,.
        \end{cases}
    \end{equation}
\end{proposition}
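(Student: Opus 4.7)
The proof naturally splits into a Representer reduction to the finite-dimensional subspace $\mathbb{P}\oplus\mathcal{F}_{K,\mathbb{P}}(X)$, followed by a KKT calculation on the resulting constrained quadratic program. Given a minimizer $f^\star$, I would let $f_X\in\mathbb{P}\oplus\mathcal{F}_{K,\mathbb{P}}(X)$ be its interpolator at $X$ (by Proposition~\ref{prop:exact_matching_cpd}) and set $r\coloneqq f^\star-f_X$, so that $r(x_i)=0$ for all $i$. The crux is to show $\langle f_X,r\rangle_{\mathcal{H}_{K,\mathbb{P}}}=0$. Writing $f_X=p+K\star\gamma$ with $p\in\mathbb{P}$ and $\gamma\in\mathcal{M}_{\mathbb{P}}$ supported on $X$: the $p$-contribution vanishes by Cauchy-Schwarz, since $\|p\|_{\mathcal{H}_{K,\mathbb{P}}}=0$; the $K\star\gamma$-contribution reduces to $\sum_i\gamma_i r(x_i)=0$ via a reproducing-like identity $\langle K\star\gamma,h\rangle_{\mathcal{H}_{K,\mathbb{P}}}=\sum_i\gamma_i h(x_i)$, obtained by decomposing $h=K\star\mu+q$ and using $\gamma(q)=0$. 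Then $\|f^\star\|^2=\|f_X\|^2+\|r\|^2\geq\|f_X\|^2$, while the data-fit is unchanged since $f^\star$ and $f_X$ coincide on $X$; hence $f_X$ is also a minimizer.

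For the explicit formula, I would parametrize any $f\in\mathbb{P}\oplus\mathcal{F}_{K,\mathbb{P}}(X)$ as $f=\sum_i\gamma_i K_{x_i}+\sum_j\alpha_j p_j$ under the constraint $P_X^T\gamma=0$ (which encodes $\gamma\in\mathcal{M}_{\mathbb{P}}$). The objective becomes $J(\gamma,\alpha)=\|Y-K_X\gamma-P_X\alpha\|^2+\lambda\gamma^T K_X\gamma$, and the Lagrangian $L=J+2\mu^T P_X^T\gamma$ has stationarity conditions
\begin{equation*}
    K_X\bigl[(K_X+\lambda\Id)\gamma-(Y-P_X\alpha)\bigr]=-P_X\mu,\qquad P_X^T(Y-K_X\gamma-P_X\alpha)=0.
\end{equation*}
The ansatz $(K_X+\lambda\Id)\gamma=Y-P_X\alpha$ satisfies the first with $\mu=0$ (since $P_X$ has full column rank by $\mathbb{P}$-unisolvence of $X$), and reduces the second to $\lambda P_X^T\gamma=0$, i.e.\ the constraint itself. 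Plugging the ansatz into $P_X^T\gamma=0$ and inverting $P_X^T(K_X+\lambda\Id)^{-1}P_X$ (full rank for the same unisolvence reason) yields the displayed formula for $\alpha$, and hence for $\gamma$; convexity of the quadratic objective ensures that this KKT point is the unique minimizer on the finite-dimensional parameter space.

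I expect the delicate point to be the reproducing-like identity used in the first step, because $\mathcal{H}_{K,\mathbb{P}}$ is only a semi-Hilbert space and its inner product must be carefully transported across the direct sum with $\mathbb{P}$ through the null space of the semi-norm. Once this identity is in hand, the remainder of the argument is routine convex quadratic programming, fully analogous to the positive-definite case of Proposition~\ref{prop:rep_th_pd_case}.
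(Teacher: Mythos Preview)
The paper does not give a proof of this proposition; it is stated as the CPD analogue of Proposition~\ref{prop:rep_th_pd_case}, with the background theory attributed to \cite{auffray_pdcpker_2009,Wendland_2004}. Your argument is the standard one and is essentially correct: the Representer reduction via the interpolating projection of Proposition~\ref{prop:exact_matching_cpd}, together with the reproducing identity $\langle K\star\gamma,h\rangle_{\mathcal{H}_{K,\mathbb{P}}}=\sum_i\gamma_i h(x_i)$ for $\gamma\in\mathcal{M}_{\mathbb{P}}$ (which you correctly obtain from the decomposition $h=K\star\mu+q$ and $\gamma(q)=0$), is exactly how this is handled in the CPD setting, and the KKT computation recovers the displayed linear system. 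One small phrasing issue: you begin with ``Given a minimizer $f^\star$'', which presupposes existence on the full semi-Hilbert space; the cleaner order is to first minimize on the finite-dimensional subspace $\mathbb{P}\oplus\mathcal{F}_{K,\mathbb{P}}(X)$ and then use your orthogonality argument to show that no $f$ outside can do better.

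One point deserves more care. Your claim that $P_X^T(K_X+\lambda\Id)^{-1}P_X$ is invertible ``for the same unisolvence reason'' is incomplete: unisolvence gives $P_X$ full column rank, but $P_X^T M P_X$ invertible requires $M$ to be definite on the range of $P_X$, and for a merely CPD kernel $K_X+\lambda\Id$ need not be positive definite (for the ED kernel on $\{0,1\}\subset\R$ one has $K_X=\bigl(\begin{smallmatrix}0&-1\\-1&0\end{smallmatrix}\bigr)$ with eigenvalue $-1$, so $K_X+\Id$ itself is singular). Since the proposition already writes both $(K_X+\lambda\Id)^{-1}$ and $\bigl(P_X^T(K_X+\lambda\Id)^{-1}P_X\bigr)^{-1}$ without comment, this is really a caveat about the statement as formulated rather than a flaw in your argument. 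A clean way to close the loop is to observe that the objective is strictly convex on $\mathbb{P}\oplus\mathcal{F}_{K,\mathbb{P}}(X)$ (if $\sum_i f(x_i)^2+\lambda\|f\|^2_{\mathcal{H}_{K,\mathbb{P}}}=0$ then $f\in\mathbb{P}$ and $f$ vanishes on a $\mathbb{P}$-unisolvent set, hence $f=0$), so the minimizer exists and is unique, and your KKT ansatz produces it whenever the displayed inverses make sense.
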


\subsection{Case of the Energy-Distance kernel, induced norm on $\mathcal{H}_{K,\mathbb{P}}$\,.}

The semi-norm defined on $\mathcal{H}_{K,\mathbb{P}}$ induces a semi-metric on $\R^n$ that depends on the interpolation points $X = (x_1, \cdots, x_n) \in \left(\R^d\right)^n$. More precisely, we can state the following proposition:
\begin{proposition}\label{prop:induced_norm}
    Let $K$ be the Energy-Distance kernel. Let $\mathbb{P}$ be a finite-dimensional space of polynomial functions from $\R^d$ to $\R$ that contains the set  $\mathcal{P}_0$ of constant polynomial functions. Let us denote $(p_1, \dots,p_l)$ a $\mathbb{P}$ basis. Let $X = (x_1, \dots, x_n) \in (\R^d)^n$ be a tuple of $n$ pairwise distinct points containing a $\mathbb{P}$-unisolvent set. Then the application
    \begin{equation}
        \begin{array}{c c}
            L_X: &\begin{array}{c l}
                \mathcal{H}_{K,\mathbb{P}}(X) \coloneqq \mathcal{F}_{K,\mathbb{P}}(X) \oplus \mathbb{P}  &\rightarrow \R^N \\
                v &\mapsto (v(x_1), \dots, v(x_n))
            \end{array}
        \end{array}
    \end{equation}
    is a linear isomorphism. Moreover, using the same matrices $K_X$ and $P_X$ as in \ref{prop:reg_pcpd} and given interpolation values 
    $Y = (y_1, \dots, y_n) \in \R^n$, there exists a unique couple $(\gamma_X(Y), \alpha_X(Y)) \in R^n \times \R^l$ such that 
    \begin{equation}\label{eq:param_RN_ed_ker}
        \begin{cases}
            Y = K_X \gamma_X(Y) + P_X \alpha_X(Y) \,,\\
            P_X^T \gamma_X(Y) = 0 \,.
        \end{cases}
    \end{equation}
    As a consequence, $L_X$ induces a semi-metric $g_{K,\mathbb{P},X}$ on $\R^n$ through
    \begin{equation}
        g_{K,\mathbb{P},X}(Y,Y) \coloneqq \gamma_X(Y)^T K_X \gamma_X(Y) \,.
    \end{equation}
    The null space of this semi-metric is exactly the set $\{P_X \alpha \,|\, \alpha \in \R^l\} \subset \R^n\,.$
\end{proposition}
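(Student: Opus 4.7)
The plan is to reduce the statement to a bijectivity statement for $L_X$ and then read off the matrix identities and the semi-metric from the resulting decomposition. First, I would fix a parameterization of $\mathcal{H}_{K,\mathbb{P}}(X)$. Any element decomposes uniquely as $v = K\star\gamma + \sum_{j=1}^l \alpha_j p_j$ with $\gamma = \sum_{i=1}^n \gamma_i \delta_{x_i} \in \mathcal{M}_\mathbb{P}$ (so $P_X^T \gamma = 0$) and $\alpha \in \R^l$, because of the direct sum decomposition $\mathcal{H}_{K,\mathbb{P}} = \mathbb{P} \oplus \mathcal{F}_{K,\mathbb{P}}$ already stated, together with the fact that $\gamma \mapsto K\star\gamma$ is injective on $\mathcal{M}_\mathbb{P} \cap \{\mathrm{supp}=X\}$ by conditional positive definiteness. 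Since $\mathbb{P}$-unisolvency makes $P_X^T:\R^n\to\R^l$ surjective, $\ker P_X^T$ has dimension $n-l$, so $\dim \mathcal{H}_{K,\mathbb{P}}(X) = (n-l)+l = n$, matching that of the target $\R^n$ of $L_X$.

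Second, I would prove that $L_X$ is injective. Suppose $L_X(v)=0$, i.e. $K_X\gamma + P_X\alpha = 0$ together with $P_X^T\gamma = 0$. Taking the inner product with $\gamma$ of the first equation and using the second yields $\gamma^T K_X \gamma = 0$. The conditional positive definiteness of $K$ on the subspace $\{\gamma : P_X^T\gamma = 0\}$ (Definition \ref{defi:pcpd_ker}) forces $\gamma = 0$, hence $P_X\alpha = 0$, i.e. $p := \sum_j \alpha_j p_j$ vanishes on $X$; $\mathbb{P}$-unisolvency of $X$ then forces $p=0$, hence $\alpha=0$. Equality of dimensions turns injectivity into bijectivity of $L_X$. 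Translating bijectivity back to the parameters, for every $Y\in\R^n$ there is a unique $(\gamma_X(Y),\alpha_X(Y))\in\R^n\times\R^l$ solving the saddle-point system \eqref{eq:param_RN_ed_ker}.

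Third, I would compute the induced semi-metric. By the definition of the semi-inner product on $\mathcal{H}_{K,\mathbb{P}}$ and the fact that $\mathbb{P}$ is its null space, for $v = K\star\gamma_X(Y) + \sum_j \alpha_X(Y)_j p_j$ one has $\|v\|^2_{\mathcal{H}_{K,\mathbb{P}}} = \|K\star\gamma_X(Y)\|^2_{\mathcal{F}_{K,\mathbb{P}}} = \gamma_X(Y)^T K_X \gamma_X(Y)$, which is the announced formula. Finally, $g_{K,\mathbb{P},X}(Y,Y)=0$ iff $\gamma_X(Y)^T K_X \gamma_X(Y)=0$, which by the same conditional positive definiteness argument as above is equivalent to $\gamma_X(Y)=0$; by \eqref{eq:param_RN_ed_ker} this in turn is equivalent to $Y = P_X\alpha_X(Y)$, identifying the null space with $\{P_X\alpha : \alpha\in\R^l\}$.

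The main subtlety is the interplay between the two hypotheses on $X$: conditional positive definiteness of $K$ alone gives injectivity of $K_X$ only on $\ker P_X^T$, and $\mathbb{P}$-unisolvency alone gives injectivity of $P_X$ only on $\mathbb{P}$ restricted to $X$; each step of the argument has to combine both, in particular when extracting $\gamma=0$ from $K_X\gamma+P_X\alpha=0$ and when later deducing $\alpha=0$. Once this is correctly handled, all other parts of the statement follow cleanly from the direct sum decomposition and a standard dimension count.
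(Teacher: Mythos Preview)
Your proof is correct and follows the same overall architecture as the paper: establish that the parameterization $(\gamma,\alpha)\mapsto K\star\gamma+\sum_j\alpha_j p_j$ is bijective onto $\mathcal{H}_{K,\mathbb{P}}(X)$, count dimensions via $\dim\ker P_X^T=n-l$, conclude that $L_X$ is an isomorphism, and then read off the semi-metric and its null space. The one noteworthy difference is in how you show $L_X$ is bijective. The paper proves the parameterization map $I_X$ is an isomorphism (using the direct sum $\mathbb{P}\oplus\mathcal{F}_{K,\mathbb{P}}$ and CPD to get $\gamma=0$, then the basis property to get $\alpha=0$), and then \emph{cites external lemmas} from \cite{auffray_pdcpker_2009} to conclude that $L_X$ is an isomorphism. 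You instead give a direct, self-contained saddle-point argument: from $K_X\gamma+P_X\alpha=0$ and $P_X^T\gamma=0$, pair with $\gamma$ to obtain $\gamma^T K_X\gamma=0$, apply CPD to get $\gamma=0$, then use unisolvency for $\alpha=0$. Your route is slightly more elementary and avoids the external reference; the paper's route leans on the existing CPD interpolation literature. Both are valid and the remaining steps (existence/uniqueness of $(\gamma_X(Y),\alpha_X(Y))$ and the null-space identification) are argued identically.
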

\begin{proof}
    First, we prove that $\mathcal{H}_{K,\mathbb{P}}(X)$ is a $n$-dimensional space. More precisely, by defining the constraint set $\Omega_{X,\mathbb{P}} \coloneqq \{ \gamma \in \R^N \,|\, P_X^T \gamma = 0 \}$, we show that the application
    \begin{equation}
        \begin{array}{c c}
            I_X: &\begin{array}{c l}
                \Omega_{X,\mathbb{P}} \times \R^m   &\rightarrow \mathcal{H}_{K,\mathbb{P}}(X) \coloneqq \mathcal{F}_{K,\mathbb{P}}(X) \oplus \mathbb{P} \\
                (\gamma,\alpha) &\mapsto v(X,\gamma,\alpha) \coloneqq \sum_{i=1}^N \gamma_i K_{x_i} + \sum_{j=1}^m \alpha_j p_j 
            \end{array}
        \end{array}
    \end{equation}
    is a linear isomorphism. Remark that the application $I_X$ is surjective by construction. Now, let $(\gamma,\alpha) \in \Omega_{X,\mathbb{P}} \times \R^l $ such that $I_X(\gamma,\alpha) = 0$. Since the sum is direct, this implies $\sum_{i=1}^n \gamma_i K_{x_i} = \sum_{j=1}^l \alpha_j p_j = 0$. As $(p_1,\dots,p_l)$ is a $\mathbb{P}$ basis, we get $\alpha = 0$. Next, the equation $\sum_{i=1}^n \gamma_i K_{x_i} = 0$ implies:
    \begin{equation}
        \left\| \sum_{i=1}^n \gamma_i K_{x_i} \right\|_{\mathcal{H}_{K,\mathbb{P}}}^2 = \sum_{i,j = 1}^n \gamma_i \gamma_j K(x_i,x_j) = 0 \,.
    \end{equation}
    However, since $\mathcal{P}_0 \subset \mathbb{P}$, we have $\sum_i \gamma_i = 0$, so that the equality above holds if and only if $\gamma = 0$, proving that $I_X$ is a linear isomorphism. 
    Finally, since $P_X^T$ has rank $l$ as $X$ contains a unisolvent set, we obtain $\dim \Omega_{X,\mathbb{P}} = n - l$. Consequently, $\dim \mathcal{H}_{K,\mathbb{P}}(X) = n$. From \cite[Lemma 5.1,5.2]{auffray_pdcpker_2009}, this proves that $L_X$ is a linear isomorphism.
    As an additional corollary of the isomorphism $I_X$, there exists a unique couple $(\gamma_X(Y),\alpha_X(Y)) \in \Omega_{X,\mathbb{P}} \times \R^l$ such that Equation \eqref{eq:param_RN_ed_ker} is satisfied , given values $Y = (y_1, \dots, y_n) \in \R^N$. Consequently, the semi-metric $g_{K,\mathbb{P},X}$ is well-defined, and we get the equivalence:
    \begin{equation}
        g_{K,\mathbb{P},X}(Y,Y) = 0 \Leftrightarrow \gamma_X(Y) = 0 \Leftrightarrow \exists \alpha_X(Y) \in \R^l \text{ such that } Y = P_X \alpha_X(Y) \,.
    \end{equation}
\end{proof}
This provides a way to induce a non-degenerate metric on $\R^n$, although other choices are possible:
\begin{corollary}
    With the previous notations, if $\lambda >0$ the formula:
    \begin{equation}
        \Tilde{g}_{K,\mathbb{P},X,\lambda}(Y,Y) \coloneqq \gamma_X(Y)^T K_X \gamma_X(Y) + \lambda\alpha_X(Y)^T P_X^T P_X \alpha_X(Y)  \,
    \end{equation}
    defines a metric on $\R^n$.
\end{corollary}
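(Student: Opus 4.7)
The plan is to show that $\Tilde{g}_{K,\mathbb{P},X,\lambda}$ is a positive-definite symmetric bilinear form on $\R^n$, in which case $Y \mapsto \sqrt{\Tilde{g}_{K,\mathbb{P},X,\lambda}(Y,Y)}$ is a norm on $\R^n$ and hence induces a genuine metric via $(Y,Y') \mapsto \sqrt{\Tilde{g}_{K,\mathbb{P},X,\lambda}(Y-Y',Y-Y')}$. I would first argue bilinearity and symmetry: by Proposition \ref{prop:induced_norm}, the map $Y \in \R^n \mapsto (\gamma_X(Y),\alpha_X(Y)) \in \Omega_{X,\mathbb{P}} \times \R^l$ is linear, being the inverse of the linear isomorphism induced by $I_X$ and $L_X$. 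Since both $K_X$ and $P_X^T P_X$ are symmetric matrices, each of the two summands is a symmetric quadratic form in $Y$, and so is their sum.

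For positive semi-definiteness, I would treat each summand separately. The first term $\gamma_X(Y)^T K_X \gamma_X(Y)$ is non-negative on the constraint set $\Omega_{X,\mathbb{P}} = \{\gamma \in \R^n \mid P_X^T \gamma = 0\}$ by the $\mathbb{P}$-CPD property of the Energy-Distance kernel, which is exactly what Proposition \ref{prop:induced_norm} exploits. The second term can be rewritten as $\alpha_X(Y)^T P_X^T P_X \alpha_X(Y) = \|P_X \alpha_X(Y)\|^2 \geq 0$, and multiplying by $\lambda > 0$ preserves the sign.

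The only substantive step, and the one I would identify as the main (albeit mild) obstacle, is non-degeneracy. Assume $\Tilde{g}_{K,\mathbb{P},X,\lambda}(Y,Y) = 0$. Since both summands are non-negative and $\lambda > 0$, each one vanishes individually. The vanishing of the first summand together with Proposition \ref{prop:induced_norm} forces $\gamma_X(Y) = 0$. The vanishing of the second summand gives $\|P_X \alpha_X(Y)\|^2 = 0$, hence $P_X \alpha_X(Y) = 0$. Here the hypothesis that $X$ contains a $\mathbb{P}$-unisolvent set is essential: it guarantees that $P_X$ has full column rank $l$, so that $\alpha_X(Y) = 0$. Substituting these two equalities into the decomposition \eqref{eq:param_RN_ed_ker} yields $Y = K_X \gamma_X(Y) + P_X \alpha_X(Y) = 0$, proving non-degeneracy and completing the proof.
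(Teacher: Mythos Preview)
Your proof is correct and is precisely the argument the paper leaves implicit: the corollary is stated without proof, as an immediate consequence of Proposition~\ref{prop:induced_norm}, and your three steps (linearity of $Y\mapsto(\gamma_X(Y),\alpha_X(Y))$, positivity of each summand on the relevant constraint set, and non-degeneracy via the full column rank of $P_X$) spell out exactly that deduction.
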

When interpolating velocity fields, extending this choice by tensorization to a metric on $\R^{N \times d}$, using the same $\lambda$ for each dimension, ensures rotation invariance. Additionally, it naturally identifies the space of translations with Euclidean space, yielding an isotropic metric on $\mathbb{P}$ that better preserves geometric structure.

\subsection{Regularity and embeddings.}

In order to characterize the regularity of these spaces, let us introduce the Sobolev spaces, which are defined, for any $s \in \R$, as: 
\begin{equation}
    H^s \coloneqq \left\{ v \in \mathcal{S}'\, \bigg|\,\|v\|_{s} < \infty \right\}\,,
\end{equation}
where $\mathcal{S}'$ is the space of tempered distributions, $\|\cdot\|_{s}$ is the classical Sobolev norm, defined via
\begin{equation}
    \|v\|_{s,2}^2 \coloneqq \int_{\R^d} \left(1+|\xi|^2\right)^{s} |\hat{v}(\xi)|^2d\xi \,.
\end{equation}
Now, the classical Sobolev embedding theorem states \cite{adams_sob}:
\begin{theorem}
    Let $s > d/2$. Then 
    \begin{equation}
        H^s \hookrightarrow \mathcal{C}^0\,,
    \end{equation}
    where $\mathcal{C}^0$ is the space of continuous functions endowed with the uniform norm $\|\cdot\|_\infty$. Moreover, there exists $\lambda >0$ such that
    \begin{equation}
        H^s \hookrightarrow \mathcal{C}^{0,\lambda}\,.
    \end{equation}
    where $\mathcal{C}^{0,\lambda}$ is the set of Hölder continuous functions equipped with the norm $\|\cdot\|_{\lambda,\infty} = \|\cdot\|_\infty + |\cdot|_\lambda$, and $|\cdot|_\lambda$ is the Hölder semi-norm.
\end{theorem}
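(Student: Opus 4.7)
The plan is to use the Fourier characterization $H^s = \{v \in \mathcal{S}' : (1+|\xi|^2)^{s/2}\hat v \in L^2\}$ and Fourier inversion. First, to establish $H^s \hookrightarrow \mathcal{C}^0$, I would show that $\hat v \in L^1(\R^d)$ for every $v \in H^s$ when $s > d/2$. Writing $\hat v(\xi) = (1+|\xi|^2)^{-s/2}\cdot(1+|\xi|^2)^{s/2}\hat v(\xi)$ and applying Cauchy--Schwarz yields
\begin{equation}
\|\hat v\|_{L^1} \leq \|v\|_{s,2}\left( \int_{\R^d}(1+|\xi|^2)^{-s}\,d\xi\right)^{1/2}.
\end{equation}
Passing to polar coordinates, the remaining integral reduces to $\int_0^\infty r^{d-1}(1+r^2)^{-s}\,dr$, which is finite precisely when $2s > d$. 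Fourier inversion then identifies $v$ (up to a null set) with a bounded continuous function satisfying $\|v\|_\infty \leq C\|v\|_{s,2}$, which is the first embedding.

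For the Hölder upgrade, I would use Fourier inversion to write
\begin{equation}
v(x) - v(y) = (2\pi)^{-d}\int_{\R^d} \hat v(\xi)\bigl(e^{i\xi\cdot x}-e^{i\xi\cdot y}\bigr)\,d\xi,
\end{equation}
and exploit the elementary bound $|e^{i\xi\cdot x}-e^{i\xi\cdot y}| \leq \min(2,|\xi|\,|x-y|)$. I would split the domain of integration at the natural scale $|\xi| = 1/|x-y|$, using the linear bound for low frequencies and the constant bound $2$ for high frequencies. Applying Cauchy--Schwarz on each piece produces two polar integrals of the form $|x-y|^2\int_0^{1/|x-y|} r^{d+1}(1+r^2)^{-s}\,dr$ and $\int_{1/|x-y|}^\infty r^{d-1}(1+r^2)^{-s}\,dr$; both can be evaluated (or bounded) explicitly in $|x-y|$ and combined to yield $|v(x)-v(y)| \leq C\|v\|_{s,2}|x-y|^\lambda$ with $\lambda = \min(s-d/2,\,1-\eps)$ for any small $\eps>0$.

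The main obstacle is simply calibrating the split so that both halves converge and produce the same power of $|x-y|$; this is what pins down $\lambda$. No step is delicate, and the threshold case $s - d/2 \geq 1$ is handled by choosing any $\lambda < 1$, which is all the statement requires. Combining the Hölder estimate with the $\mathcal{C}^0$ bound obtained above then gives $H^s \hookrightarrow \mathcal{C}^{0,\lambda}$ as claimed.
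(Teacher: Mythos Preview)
The paper does not supply its own proof of this theorem; it is stated as the classical Sobolev embedding and referenced to Adams' book \cite{adams_sob}. Your Fourier-analytic argument (showing $\hat v\in L^1$ via Cauchy--Schwarz for the $\mathcal{C}^0$ embedding, then splitting the inversion integral at scale $|\xi|=1/|x-y|$ for the H\"older estimate) is the standard proof and is correct; the exponent you obtain, $\lambda=\min(s-d/2,\,1-\eps)$, is exactly what the method yields, and since the statement only asserts existence of some $\lambda>0$ nothing further is needed.
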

Now, following \cite{arcang_mult_min_spl_2004}, let us define the space
\begin{equation}
    \dot{H}^s \coloneqq \left\{ v \in \mathcal{S}'\, \bigg|\,|v|_{s} < \infty \right\}\,,
\end{equation}
where $\mathcal{S}'$ is the set of tempered distributions on $\R^d$, and $|\cdot|_{s}$ is the Sobolev semi-norm, defined as
\begin{equation}
    |v|_{s}^2 \coloneqq \int_{\R^d} |\xi|^{2s} |\hat{v}(\xi)|^2 d\xi \,.
\end{equation}
The $L^2$ Beppo-Levi spaces are defined as
\begin{equation}
    X^{m,s} \coloneqq \left\{ v \in \mathcal{D}' \, \bigg|\, \forall \alpha \in \mathbb{N}^d, |\alpha| = m, \partial^\alpha v \in \dot{H}^s\right\}\,,
\end{equation}
endowed with the semi-norm:
\begin{equation}
    |v|_{m,s} \coloneqq \left( \sum_{|\alpha| = m} |\partial^\alpha v|_s^2 \right)^{1/2}\,.
\end{equation}
Now, let us suppose the following condition on $m \in \mathbb{N}^*$ and $s \geq 0$:
\begin{equation}\label{eq:cond_s_m}
    -m + \frac{d}{2}<s<\frac{d}{2}\,.
\end{equation}
Then the following embedding theorem is valid \cite[Sec. I-2]{arcang_mult_min_spl_2004}:
\begin{theorem}\label{th:norm_pts_unisolvent}
Let us suppose that \eqref{eq:cond_s_m} holds.
    \begin{itemize}
        \item Let $E \subset \R^d$ be a finite subset, and define
        \begin{equation}
            \|v\|_{E,m,s} \coloneqq \left( \sum_{\omega\in E} |v(\omega)|^2 + |v|_{m,s}^2\right)^{1/2}\,.
        \end{equation}
        Then, if $E$ is a $\mathcal{P}_{m-1}$-unisolvent set, $\|\cdot\|_{E,m,s}$ defines a Hilbertian norm on $X^{m,s}$, whose topology is independent of $E$. Moreover, we have the continuous embedding:
        \begin{equation}
            X^{m,s} \hookrightarrow \mathcal{C}^0 \,.
        \end{equation}
        \item Let $\Omega$ be an open subset of $\R^d$ with Lipschitz boundary. Then the operator $R_{\Omega}$ of restriction to $\Omega$ is linear and continuous from $X^{m,s}$ to $H^{m+s}(\Omega)$.
    \end{itemize}
\end{theorem}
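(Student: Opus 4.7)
The plan is to follow the classical strategy for Beppo--Levi spaces: use the Fourier side of the semi-norm $|\cdot|_{m,s}$ to understand $v$ modulo polynomials of degree $<m$, then exploit $\mathcal{P}_{m-1}$-unisolvence of $E$ to fix the polynomial ambiguity. By Plancherel, $|v|_{m,s}^2$ is comparable to $\int_{\R^d}|\xi|^{2(m+s)}|\hat v(\xi)|^2 d\xi$, so the kernel of $|\cdot|_{m,s}$ is exactly $\mathcal{P}_{m-1}$.

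The technical heart is the continuous embedding $X^{m,s}\hookrightarrow \mathcal{C}^0$. I would split $\hat v = \chi\hat v + (1-\chi)\hat v$ with $\chi$ a smooth cutoff equal to $1$ near the origin and compactly supported. For the high-frequency part, Cauchy--Schwarz gives $\|(1-\chi)\hat v\|_{L^1}\leq \|(1-\chi)|\xi|^{-(m+s)}\|_{L^2}\cdot|v|_{m,s}$; the first factor is finite precisely when $2(m+s)>d$, i.e. $s>d/2-m$, so $\mathcal{F}^{-1}((1-\chi)\hat v)$ is continuous, vanishes at infinity, and has sup norm $\lesssim |v|_{m,s}$. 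For the low-frequency part, the condition $s<d/2$ ensures that $|\xi|^{-s}$ is locally square integrable, so $\hat v$ has an integrable local singularity at $0$; one defines $\mathcal{F}^{-1}(\chi\hat v)$ modulo $\mathcal{P}_{m-1}$ by subtracting the order-$m$ Taylor polynomial of $e^{ix\cdot\xi}$ at $\xi=0$, producing a continuous representative with growth $\lesssim(1+|x|)^m|v|_{m,s}$. Combining the two pieces, every element of $X^{m,s}$ admits a continuous representative determined up to a polynomial of degree $<m$.

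With this embedding, $\mathcal{P}_{m-1}$-unisolvence makes $\|\cdot\|_{E,m,s}$ definite: if it vanishes on $v$ then $v\in\mathcal{P}_{m-1}$ and $v|_E=0$, hence $v=0$. The associated bilinear form $\langle u,v\rangle_{E,m,s}=\sum_{\omega\in E}u(\omega)v(\omega)+\sum_{|\alpha|=m}\langle\partial^\alpha u,\partial^\alpha v\rangle_{\dot H^s}$ is Hilbertian, and completeness follows because a Cauchy sequence has Cauchy $m$-th derivatives in $\dot H^s$ and Cauchy evaluations on $E$, which the embedding and unisolvence combine into a limit in $X^{m,s}$. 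Topology independence in $E$ is automatic via the closed graph theorem once the space is Banach for two choices. For the restriction statement, \eqref{eq:cond_s_m} gives $-d/2<s<d/2$, so $\dot H^s(\R^d)$ restricts continuously to $H^s(\Omega)$ on a Lipschitz domain $\Omega$; thus $\partial^\alpha v|_\Omega\in H^s(\Omega)$ for $|\alpha|=m$. Combining this with the equivalence $\|u\|_{H^{m+s}(\Omega)}\sim\sum_{|\alpha|\leq m}\|\partial^\alpha u\|_{H^{m+s-|\alpha|}(\Omega)}$ (valid on Lipschitz domains via Stein extension) and a Poincar\'e-type control of the lower-order derivatives by the evaluation at $E$ plus $|v|_{m,s}$, one obtains continuity of $R_\Omega:X^{m,s}\to H^{m+s}(\Omega)$.

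The main obstacle is the low-frequency analysis in the embedding step: rigorously subtracting the first $m$ Taylor terms of $e^{ix\cdot\xi}$ so that $\mathcal{F}^{-1}(\chi\hat v)$ is well-defined as a continuous function modulo $\mathcal{P}_{m-1}$, and checking that this construction is linear and continuous with respect to $\|\cdot\|_{E,m,s}$. The hypothesis \eqref{eq:cond_s_m} is essentially optimal here, with $s>d/2-m$ guaranteeing high-frequency integrability of $\hat v$ and $s<d/2$ controlling the low-frequency singularity; the unisolvence of $E$ then plays the purely algebraic role of resolving the polynomial ambiguity inherited from the Taylor truncation.
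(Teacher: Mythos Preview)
The paper does not prove this theorem at all: it is stated with an explicit citation to \cite[Sec.~I-2]{arcang_mult_min_spl_2004} and is treated as a known background result on Beppo--Levi spaces. So there is no ``paper's own proof'' to compare against; your proposal is a self-contained sketch of what the cited reference establishes.

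That said, your outline follows the classical Deny--Lions/Arcang\'eli route and is essentially sound. A couple of points where the sketch is looser than it should be: (i) the sentence ``$|\xi|^{-s}$ is locally square integrable, so $\hat v$ has an integrable local singularity at $0$'' is not quite right as written --- $\hat v$ itself need not be locally integrable near the origin; what your Taylor-subtraction trick actually buys is integrability of $|\xi|^m\hat v(\xi)$ near $0$, via Cauchy--Schwarz against $|\xi|^{-s}\in L^2_{\mathrm{loc}}$, and that is exactly where $s<d/2$ enters. (ii) For the restriction statement, saying ``$\dot H^s(\R^d)$ restricts continuously to $H^s(\Omega)$'' glosses over the fact that elements of $\dot H^s$ are only distributions modulo polynomials; one should first localize (multiply by a cutoff supported near $\overline\Omega$), observe that on a bounded Lipschitz domain the homogeneous and inhomogeneous Sobolev norms are equivalent, and then invoke Stein extension to get the full $H^{m+s}(\Omega)$ control from the top-order derivatives plus the point evaluations on $E$. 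These are routine fixes, and with them your argument matches what one finds in the cited source.
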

The following corollary allows for a different parametrization of a Hilbertian norm on $X^{m,s}$ when $m=1$:
\begin{corollary}[Parametrization through the mean on a design set]\label{coro:param_sum_translation}
    Suppose that \eqref{eq:cond_s_m} holds, with $m=1$. Let $X$ be a non-empty finite subset of $n$ points in $\R^d$, and define
    \begin{equation}
        \|v\|_{\Sigma X, m, s} \coloneqq \left(\left| \frac{1}{n} \sum_{x \in X}v(x) \ \right|^2 + |v|_{m,s}^2\right)^{1/2}\,.
    \end{equation}
    Then $\|\cdot\|_{\Sigma X, m, s}$ defines a Hilbertian norm on $X^{m,s}$, whose topology is independent of $X$. Moreover, it is equivalent to $\|\cdot\|_{\left\{\omega\right\}, m, s}$ from Theorem \ref{th:norm_pts_unisolvent}, for any $\omega \in \R^d$, so that the continuous embedding $X^{m,s} \hookrightarrow \mathcal{C}^0$ still holds with the norm $\|\cdot\|_{\Sigma X, m, s}$.
\end{corollary}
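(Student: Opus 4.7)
The core observation is that for $m=1$ the null space of the semi-norm $|\cdot|_{1,s}$ on $X^{1,s}$ equals $\mathcal{P}_0 = \mathbb{R}$ (the constants), any non-empty finite set is automatically $\mathcal{P}_0$-unisolvent, and the linear functional $\ell(v) := \tfrac{1}{n}\sum_{x\in X} v(x)$ plays exactly the role of an ``augmentation'' separating constants, in place of the point evaluation $v \mapsto v(\omega)$. The strategy is therefore to check that $\|\cdot\|_{\Sigma X, 1, s}$ is a norm, verify it is Hilbertian, and then bootstrap equivalence with $\|\cdot\|_{\{\omega\}, 1, s}$ via the open mapping theorem.

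First, $\ell$ is a continuous linear functional on $(X^{1,s}, \|\cdot\|_{\{\omega\}, 1, s})$: by Theorem \ref{th:norm_pts_unisolvent} there is a continuous embedding into $\mathcal{C}^0$, so $|\ell(v)| \leq \|v\|_\infty \leq C_1 \|v\|_{\{\omega\}, 1, s}$. The bilinear form $\langle v, w\rangle := \ell(v)\ell(w) + \langle v, w\rangle_{1,s}$, where $\langle\cdot,\cdot\rangle_{1,s}$ is the semi-inner product associated with $|\cdot|_{1,s}$, is symmetric and positive; if $\|v\|_{\Sigma X, 1, s} = 0$ then $|v|_{1,s} = 0$ forces $v \equiv c$, and then $\ell(v) = c = 0$. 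This gives the Hilbertian norm property. The inequality
\[
\|v\|_{\Sigma X, 1, s}^2 = |\ell(v)|^2 + |v|_{1,s}^2 \leq (1+C_1^2)\,\|v\|_{\{\omega\}, 1, s}^2
\]
is then immediate, establishing one half of the equivalence.

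For the reverse inequality, I would appeal to the open mapping theorem: the identity $(X^{1,s}, \|\cdot\|_{\{\omega\}, 1, s}) \to (X^{1,s}, \|\cdot\|_{\Sigma X, 1, s})$ is a continuous bijection, so it is an isomorphism provided both spaces are Banach. Completeness of the left-hand side is given by Theorem \ref{th:norm_pts_unisolvent}; for the right-hand side I would use the topological decomposition $X^{1,s} = \mathcal{P}_0 \oplus \ker \ell$ (legitimate since $\mathcal{P}_0$ is a one-dimensional, hence closed, subspace of the Hilbert space $(X^{1,s}, \|\cdot\|_{\{\omega\}, 1, s})$). A Cauchy sequence $(v_n)$ for $\|\cdot\|_{\Sigma X, 1, s}$ splits as $v_n = c_n + u_n$ with $c_n := \ell(v_n) \in \mathbb{R}$ and $u_n \in \ker \ell$; the scalars $c_n$ converge in $\mathbb{R}$, while $(u_n)$ is Cauchy for $|\cdot|_{1,s}$. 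Convergence of $(u_n)$ then follows once we know $(\ker\ell, |\cdot|_{1,s})$ is Banach.

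The main technical step is precisely this last point, which I would deduce by identifying $(\ker\ell, |\cdot|_{1,s})$ with the quotient $X^{1,s}/\mathcal{P}_0$. For $v \in X^{1,s}$, the quotient norm $\inf_{c \in \mathbb{R}} \|v + c\|_{\{\omega\},1,s}$ is attained at $c = -v(\omega)$, giving exactly $|v|_{1,s}$; hence the quotient norm on $X^{1,s}/\mathcal{P}_0$ coincides with the one induced by $|\cdot|_{1,s}$, and completeness is inherited from the quotient of a Hilbert space by a closed subspace. This closes the circle: $\|\cdot\|_{\Sigma X, 1, s}$ is complete, open mapping yields equivalence with $\|\cdot\|_{\{\omega\}, 1, s}$, independence of the topology on $X$ follows immediately, and the continuous embedding $X^{1,s} \hookrightarrow \mathcal{C}^0$ transfers verbatim.
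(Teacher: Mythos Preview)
Your proof is correct and follows essentially the same route as the paper: one inequality via the continuous embedding $X^{1,s}\hookrightarrow\mathcal C^0$, then completeness of $(X^{1,s},\|\cdot\|_{\Sigma X,1,s})$ plus the open mapping (Banach isomorphism) theorem for the reverse direction. The only difference is presentational: where the paper simply asserts that a Cauchy sequence for $|\cdot|_{1,s}$ admits a limit $\tilde u\in X^{1,s}$ and then adjusts by a constant, you make this step explicit by splitting $v_n=c_n+u_n$ along $\mathcal P_0\oplus\ker\ell$ and identifying $(\ker\ell,|\cdot|_{1,s})$ isometrically with the Hilbert quotient $X^{1,s}/\mathcal P_0$.
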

\begin{proof}
    Let $\omega \in \R^d$. Then the set $\{\omega\}$ is $\mathcal{P}_0$-unisolvent, so that we can apply Theorem \ref{th:norm_pts_unisolvent}. This means there exists $C_\omega > 0$ so that, for all $x \in X$:
    \begin{equation}
        |v(x)| \leq C_\omega \|v\|_{\left\{\omega\right\}, m, s}\,,
    \end{equation}
    so that 
    \begin{equation}
        \|\cdot\|_{\Sigma X, m, s} \leq (1+C_\omega^2)^{1/2} \|\cdot\|_{\left\{\omega\right\}, m, s}\,.
    \end{equation}
    Now let us prove that $(X^{m,s},\|\cdot\|_{\Sigma X, m, s})$ is a Banach space. Let $u_j$ be a Cauchy sequence in this space. Then:
    \begin{equation}
        \exists \Tilde{u} \in X^{m,s}, \underset{j}{\lim}\,|u_j - \Tilde{u}|_{m,s} = 0\,,
    \end{equation}
    \begin{equation}
        \exists \eta \in \R, \underset{j}{\lim}\,\frac{1}{n}  \sum_{x \in X}u_j(x) = \eta \,.
    \end{equation}
    Moreover, there exists $\lambda \in \R$ so that 
    \begin{equation}
        \frac{1}{n}  \sum_{x \in X}\Tilde{u}(x) + \lambda = \eta\,.
    \end{equation}
    This proves that $u_j$ converges to $\Tilde{u} + \lambda$ with respect to the $\|\cdot\|_{\Sigma X, m, s}$ norm, proving $(X^{m,s},\|\cdot\|_{\Sigma X, m, s})$ is a Banach space. We can apply the Banach Isomorphism Theorem \cite[II.6]{brezis} to obtain the equivalence of the two norms. The rest of the statement is a simple application of Theorem \ref{th:norm_pts_unisolvent}.
\end{proof}
Now, let us define:
\begin{definition}[$(m,s)$ splines kernels]\label{def:ms_splines}
    Let $m,s$ satisfying \eqref{eq:cond_s_m}, and denote $\nu \coloneqq 2m+2s-d$. The associated $(m,s)$ spline kernel is defined as
    \begin{equation}
        G_{m,s}(x) = \begin{cases}
            (-1)^{\lceil \nu \rceil} |x|^{2\nu}, &\nu \notin \mathbb{N}^*\,, \\
            (-1)^{\nu+1}|x|^{2\nu}\log|x|, &\nu \in \mathbb{N}^*\,.
        \end{cases}
    \end{equation}
\end{definition}
From \cite[Th 10.43]{Wendland_2004} and \cite[Sec. II]{arcang_mult_min_spl_2004}, we obtain the following characterization :
\begin{theorem}\label{th:carac_prkhs_bepo}
    Let $m,s$ satisfying \eqref{eq:cond_s_m}, and define $\mathcal{P}_{m-1}$ the space of $d$-variate polynomials of degree at most $m-1$. Then the kernel $G_{m,s}$ is $\mathcal{P}_{m-1}$-CPD. Moreover, the associated semi-Hilbert space is a Beppo-Levi space, given by:
    \begin{equation}
        \mathcal{H}_{G_{m,s},\mathcal{P}_{m-1}} = \mathcal{P}_{m-1} \oplus \mathcal{F}_{G_{m,s},\mathcal{P}_{m-1}} = X^{m,s}\,.
    \end{equation}
    Furthermore, $X^{m,s}$ being endowed with the semi-Hilbert norm $|\cdot|_{m,s}$, the semi-inner products of these spaces are the same, in the sense that, for all $f \in X^{m,s}$:
    \begin{equation}
        |v|_{m,s} = |v|_{\mathcal{H}_{G_{m,s},\mathcal{P}_{m-1}}}\,.
    \end{equation}
\end{theorem}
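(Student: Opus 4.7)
The plan is to prove all three claims simultaneously via Fourier analysis of tempered distributions. The key fact, extracted from the classical theory of homogeneous distributions (Gelfand--Shilov), is that the distributional Fourier transform of $G_{m,s}$ equals $c_{m,s}\,|\xi|^{-2(m+s)}$ up to a polynomial correction of degree at most $2m-2$; the sign conventions in Definition \ref{def:ms_splines} are tuned precisely so that $c_{m,s}>0$, and the logarithmic factor in the integer case of $\nu$ is the natural regularization arising from analytic continuation in $\nu$. Under condition \eqref{eq:cond_s_m}, the weight $|\xi|^{-2(m+s)}$ is integrable at infinity (since $m+s>d/2$) but singular at the origin (since $m+s<d$), which is exactly what forces the moment conditions on admissible distributions.

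For the CPD property, take $c_1,\dots,c_n\in\R$ with $\sum_i c_i q(x_i) = 0$ for every $q \in \mathcal{P}_{m-1}$, and set $\gamma = \sum_i c_i \delta_{x_i}$. The moment conditions translate on the Fourier side to $\partial^\alpha \hat\gamma(0)=0$ for $|\alpha|\le m-1$, so $\hat\gamma(\xi) = O(|\xi|^m)$ near the origin; this both annihilates the polynomial correction in $\widehat{G_{m,s}}$ when paired with $|\hat\gamma|^2$ and ensures integrability at zero. Parseval then yields
\begin{equation}
    \sum_{i,j} c_i c_j\, G_{m,s}(x_i-x_j) \;=\; c_{m,s} \int_{\R^d} |\hat\gamma(\xi)|^2\,|\xi|^{-2(m+s)}\,d\xi \;\ge\; 0,
\end{equation}
with equality forcing $\hat\gamma \equiv 0$ and hence $c_1=\cdots=c_n=0$ since the $x_i$ are distinct.

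To identify $\mathcal{H}_{G_{m,s},\mathcal{P}_{m-1}}$ with $X^{m,s}$, I would first rewrite the Beppo--Levi semi-norm on the Fourier side: the multinomial identity $\sum_{|\alpha|=m} \binom{m}{\alpha}|\xi^\alpha|^2 = |\xi|^{2m}$ (the missing constant is absorbed into the semi-norm) gives $|v|_{m,s}^2 = C \int |\xi|^{2(m+s)} |\hat v(\xi)|^2\, d\xi$. For $v = G_{m,s}\star\gamma \in \mathcal{F}_{G_{m,s},\mathcal{P}_{m-1}}$, the vanishing of $\hat\gamma$ to order $m$ at the origin again erases the polynomial corrections, so $|v|_{m,s}^2 = c_{m,s}\int |\hat\gamma(\xi)|^2 |\xi|^{-2(m+s)} d\xi = |v|^2_{\mathcal{H}_{G_{m,s},\mathcal{P}_{m-1}}}$. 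This yields an isometric inclusion $\mathcal{F}_{G_{m,s},\mathcal{P}_{m-1}} \hookrightarrow X^{m,s}/\mathcal{P}_{m-1}$. Equality then follows from a density argument: any $v \in X^{m,s}$ in the semi-orthogonal complement of $\mathcal{F}_{G_{m,s},\mathcal{P}_{m-1}}$ corresponds via the weighted $L^2$ duality to a tempered distribution whose Fourier transform is supported at the origin, hence is a polynomial; the finiteness of $|v|_{m,s}$ forces this polynomial to lie in $\mathcal{P}_{m-1}$.

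The main technical obstacle is the careful treatment of the logarithmic case $\nu \in \mathbb{N}^*$: the distributional Fourier transform of $|x|^{2\nu}\log|x|$ combines the homogeneous piece $|\xi|^{-2(m+s)}$ with a $|\xi|^{-d}\log|\xi|$ term and an explicit polynomial of degree $2\nu$, and one must verify that all of the extra contributions annihilate against $|\hat\gamma|^2$ under the moment conditions. Once this bookkeeping of constants and signs is in place, the three claims follow, and as a sanity check they can also be imported directly from \cite{Wendland_2004,arcang_mult_min_spl_2004}.
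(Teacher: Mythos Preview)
The paper does not actually prove this theorem: it is stated as a direct consequence of \cite[Th.~10.43]{Wendland_2004} and \cite[Sec.~II]{arcang_mult_min_spl_2004}, with no argument given beyond the citation. Your Fourier-analytic sketch is precisely the standard route taken in those references (Wendland in particular proceeds exactly via the generalized Fourier transform of the radial kernel and the moment conditions on $\hat\gamma$), so in substance you are reconstructing the cited proof rather than proposing an alternative.

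Two small cautions on your sketch. First, the exact equality $|v|_{m,s} = |v|_{\mathcal{H}_{G_{m,s},\mathcal{P}_{m-1}}}$ asserted in the theorem (not merely equivalence) depends on the normalization of both the Beppo--Levi semi-norm and the kernel; your computation produces a factor $c_{m,s}^2$ on one side and $c_{m,s}$ on the other unless the multinomial constant in $|v|_{m,s}$ is chosen compatibly, so ``absorbed into the semi-norm'' needs to be made precise. Second, your density step (anything semi-orthogonal to $\mathcal{F}_{G_{m,s},\mathcal{P}_{m-1}}$ must be a polynomial of degree $\le m-1$) is correct in spirit but requires the duality to be set up carefully, since elements of $X^{m,s}$ are only tempered distributions modulo polynomials; the cleanest way is to work in the Lizorkin test space as Wendland does. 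These are genuine technical points but not gaps in the strategy, and your closing remark that the result can be imported from the references is exactly what the paper does.
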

\subsection{A few possible choices for the Energy-Distance kernel.}\label{subsec:choices_of_P}
The Energy-Distance kernel $x \mapsto -|x|$ corresponds to an $(m,s)$ spline kernel, when $2m + 2s - d = 1$. This leads to several possible choices.
\subsubsection{The Energy-Distance as a pseudo-polynomial spline.}\label{subsubsec:ed_ker_m_is_1}
Choosing $m = 1$ and $s = \frac{d-1}{2}$, the kernel $G_{1,\frac{d-1}{2}}$ is referred to as a \textit{pseudo-polynomial} \cite[Sec II-3]{arcang_mult_min_spl_2004} or \textit{multiquadric} \cite{HARDY1990163} spline.
We can apply Proposition \ref{prop:induced_norm}, with $\mathbb{P} = \mathcal{P}_{0}$. The null space of the semi-metric $g_{K,\mathbb{P},X}$ is a one-dimensional set, equal to $\R \mathbbm{1}_N$, where $\mathbbm{1}_N = (1,\dots,1) \in \R^N$. In the context of velocity field interpolation, this corresponds to imposing no penalty on translations. Specifically, translations are the null-space of the tensorized semi-metric.

\subsubsection{Affine transformation invariance}
Choosing $m = 2$ and $s = \frac{d-3}{2}$ leads to $\mathbb{P} = \mathcal{P}_{1}$ and applying Proposition \ref{prop:induced_norm}, this time the null space of the associated metric $g_{K,\mathbb{P},X}$ is exactly the set of vectors $Y_0 = (f(x_i)+b)_{i\in[1,N]}$, where $f:\R^d\mapsto\R$ is a linear form and $b \in \R$. In the context of velocity field interpolation, this corresponds to imposing no penalty on affine deformations, which are the null-space of the tensorized semi-metric.
\subsubsection{The Energy-Distance as a Thin-Plate Spline kernel (TPS)}
The TPS kernels \cite{bookstein89tps,camionyounes2001} are defined as $(m,s)$ spline kernels for $s = 0$. Choosing $m = \frac{d+1}{2}$ if $d$ is odd, condition \eqref{eq:cond_s_m} is verified, and all the embedding theorems above are true.
Applying Proposition \ref{prop:induced_norm} with $\mathbb{P} = \mathcal{P}_{m-1}$, the set $\{P_X \alpha, \alpha \in \R^m\}$ is the null space of the induced semi-metric $g_{K,\mathbb{P},X}$. As in this paragraph $\mathbb{P} = \mathcal{P}_{\frac{d-1}{2}}$, for $d \geq 2$, this space contains, for example, all the vectors that can be written as $Y_0 = (f(x_i)+b)_{i\in[1,N]}$, where $f:\R^d\mapsto\R$ is a linear form and $b \in \R$. However, it contains many more if $d\geq 5$.

\section{Bi-Lipschitz homeomorphic matching using the Energy-Distance Kernel.}\label{SecTheNeedForRegularization}
We now show that regularization is necessary to obtain invertible and regular maps by proving that the so-called Landmark space endowed with the Energy distance kernel as the co-metric is not a complete Riemannian space. In other words, by solving the standard matching problem with the ED kernel, one can obtain non-invertible solutions of the variational problem.
\subsection{The variational problem on points}\label{sec:match_pb_without_tv}
In this section, we study the variational problem on the space of landmarks (i.e. $n$ distinct points), \textbf{without} any regularization on the vector field other than the norm induced by the kernel.
Specifically, we aim to match $n$ distinct points $(x_i)_{i = 1,\ldots,n} \in \mathbb R^d$ with $n$ distinct target points $(y_i)_{i = 1,\ldots,n} \in \mathbb R^d$. The displacement is modeled through velocity fields $v_t$, which are regularized through a norm $\|\cdot \|_{V_{m,s}}$. Here, $\|\cdot \|_{V_{m,s}}$ is any of the equivalent norms defined in Theorem \ref{th:norm_pts_unisolvent}, or in Corollary \ref{coro:param_sum_translation} if $m=1$. We assume that condition \eqref{eq:cond_s_m} is satisfied, and denote by $V_{m,s}$ the corresponding Beppo-Levi space endowed with the norm $\|\cdot \|_{V_{m,s}}$.
\begin{lemma}[Existence of trajectories]
    Let $v \in L^2([0,1],V_{m,s})$. Then, there exists a solution to the equation
    \begin{equation}\label{eq:lagr_traj_without_tv}
        \frac{d}{dt}x_i(t) = v_t (x_i(t))\,,
    \end{equation}
    with the initial condition $x_i(t) = x_i$.
\end{lemma}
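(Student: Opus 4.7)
The plan is to view the coupled system \eqref{eq:lagr_traj_without_tv} as a single non-autonomous ODE on the finite-dimensional space $(\mathbb R^d)^n$, with right-hand side $F(t,X)\coloneqq(v_t(x_1),\ldots,v_t(x_n))$, and invoke a Carathéodory–Peano existence theorem. Classical Cauchy–Lipschitz is \emph{not} applicable here because the Beppo–Levi space $V_{m,s}$ only embeds into $\mathcal{C}^0$ (not $\mathcal{C}^1$) under the condition \eqref{eq:cond_s_m} when $m+s \le 1+d/2$; this lack of Lipschitz regularity is exactly the issue that motivates the regularization theory developed in the rest of the section, and is the main obstacle to a stronger statement (such as uniqueness or a flow).

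First, I would use the continuous embedding $V_{m,s}\hookrightarrow \mathcal C^0$ provided by Theorem \ref{th:norm_pts_unisolvent} (or by Corollary \ref{coro:param_sum_translation} in the case $m=1$) to obtain a constant $C>0$ such that $\|w\|_\infty \le C\|w\|_{V_{m,s}}$ for every $w\in V_{m,s}$. Since $v \in L^2([0,1],V_{m,s})$, this yields the $X$-uniform majoration
\begin{equation}
    \|F(t,X)\|\;\le\;\sqrt{n}\,C\,\|v_t\|_{V_{m,s}}\qquad\text{for a.e. }t\in[0,1],
\end{equation}
and the right-hand side belongs to $L^2([0,1])\subset L^1([0,1])$.

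Next, I would verify the Carathéodory conditions. For almost every $t$, the map $X\mapsto F(t,X)$ is continuous, since each $v_t$ is a continuous function on $\R^d$ by the embedding above. For every fixed $X$, the map $t\mapsto F(t,X)$ is measurable: indeed $v:[0,1]\to V_{m,s}$ is Bochner measurable, and the evaluation at $x_i$ is a continuous linear functional on $V_{m,s}$ (another consequence of Theorem \ref{th:norm_pts_unisolvent}), so each coordinate $t\mapsto v_t(x_i)$ is measurable.

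Having a Carathéodory vector field with an $L^1$-in-$t$ dominating function independent of $X$, the classical Peano–Carathéodory existence theorem produces an absolutely continuous solution $X(t)=(x_1(t),\ldots,x_n(t))$ defined on a maximal subinterval of $[0,1]$ containing $0$. The same global estimate $\|\dot X(t)\|\le \sqrt n\,C\,\|v_t\|_{V_{m,s}}$ precludes blow-up in finite time, so the solution extends to all of $[0,1]$, yielding the claimed trajectory. Uniqueness is deliberately not asserted, consistent with the subsequent discussion showing that without additional regularization the problem can admit non-injective limit configurations.
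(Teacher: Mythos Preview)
Your proposal is correct and follows exactly the paper's approach: invoke the Carath\'eodory existence theorem, using the continuous embedding $V_{m,s}\hookrightarrow(\mathcal C^0,\|\cdot\|_\infty)$ from Theorem~\ref{th:norm_pts_unisolvent} (or Corollary~\ref{coro:param_sum_translation} when $m=1$) to supply the required continuity in $X$ and the $L^1$-in-$t$ dominating function. The paper's own proof is a single sentence to this effect; your version simply spells out the Carath\'eodory hypotheses and the no-blow-up argument explicitly.
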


\begin{proof}
    This is a simple application of the Caratheodory existence theorem. Indeed, the space $V_{m,s}$ is continuously embedded in $(\mathcal{C}^{0},\|\cdot\|_\infty)$ by Theorem \ref{th:norm_pts_unisolvent} or Corollary \ref{coro:param_sum_translation}. 
\end{proof}
\begin{remark}
    The trajectories are not necessarily unique for general $v_t$, as illustrated in \cite{ambrosio_crippa_2014}.
\end{remark}
However, they are uniformly continuous in time:
\begin{proposition}\label{prop:holder_cont_traj}
    Let $x_i$ be a solution to Equation \eqref{eq:lagr_traj_without_tv}. Then $t \in [0,1] \mapsto x_i(t)$ is $1/2$-Hölder continuous. 
\end{proposition}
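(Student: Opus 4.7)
The plan is to exploit the continuous embedding $V_{m,s} \hookrightarrow (\mathcal{C}^0, \|\cdot\|_\infty)$ given by Theorem \ref{th:norm_pts_unisolvent} (or Corollary \ref{coro:param_sum_translation} when $m=1$) together with the Cauchy--Schwarz inequality in time. This is the canonical way to obtain $1/2$-Hölder regularity of flows driven by $L^2$-in-time vector fields with a uniformly bounded spatial norm.

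First, I would fix a constant $C>0$ provided by the Sobolev-type embedding so that
\begin{equation*}
    \|w\|_\infty \,\leq\, C\,\|w\|_{V_{m,s}} \qquad \text{for every } w \in V_{m,s}.
\end{equation*}
In particular, for almost every $\tau \in [0,1]$ one has $\|v_\tau\|_\infty \leq C\|v_\tau\|_{V_{m,s}}$, and the scalar map $\tau \mapsto \|v_\tau\|_{V_{m,s}}$ is in $L^2([0,1])$ by assumption on $v$.

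Next, for $0\leq s \leq t \leq 1$, I would integrate the ODE \eqref{eq:lagr_traj_without_tv} and bound
\begin{equation*}
    |x_i(t) - x_i(s)| \,=\, \left| \int_s^t v_\tau(x_i(\tau))\, d\tau \right| \,\leq\, \int_s^t \|v_\tau\|_\infty\, d\tau \,\leq\, C \int_s^t \|v_\tau\|_{V_{m,s}}\, d\tau.
\end{equation*}
Applying Cauchy--Schwarz to the last integral yields
\begin{equation*}
    \int_s^t \|v_\tau\|_{V_{m,s}}\, d\tau \,\leq\, \sqrt{t-s}\,\left( \int_0^1 \|v_\tau\|_{V_{m,s}}^2\, d\tau \right)^{1/2} \,=\, \sqrt{t-s}\,\|v\|_{L^2([0,1],V_{m,s})},
\end{equation*}
so that $|x_i(t)-x_i(s)| \leq C\|v\|_{L^2([0,1],V_{m,s})}\sqrt{t-s}$, which is exactly the desired $1/2$-Hölder estimate.

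There is no real obstacle here: the argument is entirely classical once the continuous embedding $V_{m,s} \hookrightarrow \mathcal{C}^0$ has been established in the previous section. The only subtlety worth flagging is that when $m=1$ one must invoke Corollary \ref{coro:param_sum_translation} rather than Theorem \ref{th:norm_pts_unisolvent} so that the embedding constant $C$ is finite for the chosen Hilbertian norm on $V_{m,s}$; since both norms are equivalent by that corollary, the bound above holds with a constant depending only on the normalization chosen for $V_{m,s}$.
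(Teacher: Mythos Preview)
Your proof is correct and follows essentially the same argument as the paper: integrate the ODE, bound the integrand pointwise via the continuous embedding $V_{m,s}\hookrightarrow\mathcal{C}^0$, then apply Cauchy--Schwarz in time. The paper's proof is terser but identical in substance, and your remark about invoking Corollary~\ref{coro:param_sum_translation} when $m=1$ is a welcome clarification.
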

\begin{proof}
    First, let us remark that any solution to Equation \eqref{eq:lagr_traj_without_tv} verifies, for all $t \geq 0$:
    \begin{equation}\label{eq:lag_traj_without_tv_integral}
        x_i(t) = x_i(0) + \int_0^t v_s(x_i(s))ds \,.
    \end{equation}
    This proves, using the Cauchy-Schwartz inequality and the continuous embedding $V_{m,s} \hookrightarrow \mathcal{C}^0$:
    \begin{equation}
        |x_i(t) - x_i(s)| \leq C_ {m,s}  \sqrt{|t-s|}\|v\|_{L_2([0,1],V_{m,s})}\,.
    \end{equation}
    for a constant $C_ {m,s} >0$.
\end{proof}
Now, we study the following problem:
\begin{equation}\label{eq:min_prob_without_tv}
    \min_{v \in L^2([0,1],V_{m,s})} \int_0^1 \| v_t\|^2_{V_{m,s}} dt \,,
\end{equation}
under the constraint that $\psi(t=1,x_i) = y_i$ for $(x_i)_{i = 1,\ldots,n} \in \mathbb R^d$, $n$ distinct points as well as $(y_i)_{i = 1,\ldots,n} \in \mathbb R^d$.
\begin{proposition}
    There exist solutions to the variational problem \eqref{eq:min_prob_without_tv}.
\end{proposition}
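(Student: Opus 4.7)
The proof is a direct method argument adapted to the Beppo-Levi setting, and splits naturally into four steps.

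First, I would verify that the admissible set is non-empty. Take linear trajectories $z_i(t) \coloneqq (1-t)x_i + t y_i$, and define $v_t$ as the minimal $V_{m,s}$-norm interpolator satisfying $v_t(z_i(t)) = y_i - x_i$, whose existence and explicit form are given by \Cref{prop:exact_matching_cpd,prop:reg_pcpd} (extended componentwise to vector fields). Since $(z_i(t))_{i}$ depends continuously on $t$ and can be chosen to contain a $\mathbb P$-unisolvent set for a.e.\ $t$ (by a small perturbation of the endpoints, if needed, using that unisolvence fails only on a zero Lebesgue measure set), $t \mapsto \|v_t\|_{V_{m,s}}$ is bounded, so $v \in L^2([0,1],V_{m,s})$, and by construction $\psi(1,x_i) = y_i$.

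Next, pick a minimizing sequence $(v^k)_k$, which is bounded in $L^2([0,1],V_{m,s})$. Since the latter is a Hilbert space, a subsequence (still denoted $v^k$) converges weakly to some $v\opt$, and weak lower semi-continuity of the norm gives
\begin{equation}
    \int_0^1 \|v\opt_t\|^2_{V_{m,s}}\,dt \;\leq\; \liminf_{k} \int_0^1 \|v^k_t\|^2_{V_{m,s}}\,dt.
\end{equation}
By \Cref{prop:holder_cont_traj}, the associated trajectories $(x_i^k)_k$ are uniformly $1/2$-Hölder in $t$, hence equicontinuous and bounded. Arzelà-Ascoli yields a further subsequence with $x_i^k \to x_i\opt$ uniformly on $[0,1]$, with $x_i\opt(0)=x_i$ and $x_i\opt(1)=y_i$.

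The main obstacle is to pass to the limit in the integral flow equation $x_i^k(t) = x_i + \int_0^t v^k_s(x_i^k(s))\,ds$, since $v^k$ only converges weakly while being composed with $x_i^k$. I would split
\begin{equation}
    v^k_s(x_i^k(s)) - v\opt_s(x_i\opt(s)) \;=\; \bigl[v^k_s(x_i^k(s)) - v^k_s(x_i\opt(s))\bigr] \;+\; \bigl[v^k_s(x_i\opt(s)) - v\opt_s(x_i\opt(s))\bigr].
\end{equation}
For the second bracket, fixing the trajectory $x_i\opt$ makes $v \mapsto \int_0^t v_s(x_i\opt(s))\,ds$ a bounded linear functional on $L^2([0,1],V_{m,s})$ (by the continuous embedding $V_{m,s} \hookrightarrow \mathcal C^0$ from \Cref{th:norm_pts_unisolvent}), so weak convergence sends it to zero. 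For the first bracket, I would exploit local Hölder regularity: condition \eqref{eq:cond_s_m} gives $m+s>d/2$, so by the restriction part of \Cref{th:norm_pts_unisolvent} combined with the Sobolev embedding $H^{m+s}(\Omega)\hookrightarrow \mathcal C^{0,\lambda}(\Omega)$ on a bounded Lipschitz $\Omega$ containing all trajectories, each $v^k_s$ is $\lambda$-Hölder on $\Omega$ with seminorm controlled by $\|v^k_s\|_{V_{m,s}}$. Cauchy-Schwarz then bounds the first term by $C\,\|v^k\|_{L^2V}\,\bigl(\int_0^t |x_i^k(s)-x_i\opt(s)|^{2\lambda}ds\bigr)^{1/2}$, which vanishes thanks to the energy bound and uniform convergence of $x_i^k$.

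Combining these limits shows $x_i\opt(t) = x_i + \int_0^t v\opt_s(x_i\opt(s))\,ds$ and $x_i\opt(1) = y_i$, so $v\opt$ is admissible. Together with the lower semi-continuity bound, $v\opt$ realizes the infimum, which proves existence.
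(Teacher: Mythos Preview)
Your approach mirrors the paper's --- weak compactness in the Hilbert space $L^2([0,1],V_{m,s})$, Arzel\`a--Ascoli on the trajectories via \Cref{prop:holder_cont_traj}, then passage to the limit in the integral flow equation --- but you spell out the limiting step (the two-term splitting and the local H\"older control via the $H^{m+s}(\Omega)$ restriction of \Cref{th:norm_pts_unisolvent}) in far more detail than the paper, which compresses all of this into two sentences. One small caveat on your non-emptiness construction: if two linear segments $z_i(t),z_j(t)$ meet at some $t_0$ with $y_i-x_i\neq y_j-x_j$, the minimal-norm interpolator has squared norm behaving like $|t-t_0|^{-1}$ near $t_0$ (cf.\ the two-particle computation in \Cref{ex:two_particules}) and is not in $L^2$ in time; for $d\geq 2$ this is repaired by choosing non-crossing paths, and in any case the paper does not address non-emptiness at all.
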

\begin{proof}
Let $v^k$ be a minimizing sequence for Problem \eqref{eq:min_prob_without_tv}. We can assume that $\|v^k\|_{L^2([0,1],V_{m,s})}$ is uniformly bounded in $k$, and by applying Proposition \eqref{prop:holder_cont_traj}, they are uniformly continuous. We can apply the Arzela-Ascoli Theorem, proving the existence of a limit velocity $v^{\infty}$ that is a solution to the matching problem \eqref{eq:min_prob_without_tv}. Now, passing to the limit in \eqref{eq:lag_traj_without_tv_integral}, the limiting curves $x_i^\infty$ are solutions to the ODE equation associated with $v^\infty$. Moreover, by uniform convergence, $x_i^{\infty} = y_i$. Finally, $v^\infty$ has minimal $\|\cdot\|_{L^2([0,1],V_{m,s})}$ norm by lower semi-continuity.
\end{proof}
\begin{remark}
Due to the non-uniqueness of solutions to the ODE, it may be possible to find non-diffeomorphic solutions. In other words, from $n$ distinct points, fewer points can be obtained during the evolution after a local collpase. Indeed, the non-uniqueness of solutions enables branching trajectories. 
\end{remark}
While the space $V_{m,s}$ is infinite dimensional, the solutions to Problem \eqref{eq:min_prob_without_tv} are sparse, in the following sense:
\begin{proposition}[Moment representation]
    Let $v \in L^2([0,1],V_{m,s})$ be a solution to Problem \eqref{eq:min_prob_without_tv}, and denote $X_t = (x_i(t), 1 \leq i \leq n)$. Let us denote $G_{m,s}$ as the corresponding $(m,s)$-spline kernel from Definition \ref{def:ms_splines}. Then, $v$ has the following form:
    \begin{equation}\label{eq:finite_spline_rep_without_tv}
        v_t(x) = \sum_{i=1}^n \gamma_i(t) G_{m,s}(x-x_i(t)) + p_t(x)\,,
    \end{equation}
    where $p_t \in \mathcal{P}_{m-1}$ at all times and $\gamma_i(t) \in \R^d$ for all $i$ verifies the constraint:
    \begin{equation}\label{eq:constraint_moments}
        \forall p \in \mathcal{P}_{m-1}, \sum_{i=1}^n \gamma_i(t) p(x_i(t)) = 0 \,.
    \end{equation}
\end{proposition}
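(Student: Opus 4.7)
The plan is to argue by a pointwise-in-time representer theorem. First I observe that the constraint $\psi(1,x_i)=y_i$, together with the trajectory equation $\dot{x}_i(t) = v_t(x_i(t))$, only depends on $v_t$ through its values $v_t(x_1(t)),\dots,v_t(x_n(t))$ at the current positions. Therefore, given any candidate minimizer $v$, I may freeze the trajectories $X_t = (x_1(t),\dots,x_n(t))$ produced by $v$ and replace, for almost every $t$, the field $v_t$ by the vector field $\widetilde v_t \in V_{m,s}$ of minimal $V_{m,s}$-seminorm that interpolates the same values at $X_t$. The modified field $\widetilde v$ still satisfies $\dot{x}_i(t) = \widetilde v_t(x_i(t))$, and hence still drives $x_i$ to $y_i$, while $\|\widetilde v_t\|_{V_{m,s}} \le \|v_t\|_{V_{m,s}}$ pointwise a.e. By minimality of $v$, equality must hold, so $v_t = \widetilde v_t$ for a.e.\ $t$.

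Next I invoke the minimal-norm interpolation result for the $\mathbb{P}$-CPD kernel $G_{m,s}$. By Theorem \ref{th:carac_prkhs_bepo}, the semi-Hilbert space $V_{m,s}$ coincides with $\mathcal{H}_{G_{m,s},\mathcal{P}_{m-1}}$ with the same seminorm. Provided that $X_t$ contains a $\mathcal{P}_{m-1}$-unisolvent set, Proposition \ref{prop:exact_matching_cpd} (applied componentwise in $\mathbb{R}^d$) yields that the minimal-seminorm interpolator $\widetilde v_t$ lies in $\mathcal{P}_{m-1} \oplus \mathcal{F}_{G_{m,s},\mathcal{P}_{m-1}}(X_t)$, which is exactly the form
\[
  \widetilde v_t(x) = \sum_{i=1}^n \gamma_i(t)\, G_{m,s}(x - x_i(t)) + p_t(x),\qquad p_t \in \mathcal{P}_{m-1},
\]
with the moment constraint $\sum_i \gamma_i(t)\, p(x_i(t)) = 0$ for all $p \in \mathcal{P}_{m-1}$ coming from the definition of $\mathcal{F}_{G_{m,s},\mathcal{P}_{m-1}}$ (i.e.\ from $\mathcal{M}_{\mathbb{P}}$). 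This gives the claimed representation at each time $t$ where the unisolvency assumption holds.

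The main obstacle is the unisolvency hypothesis on $X_t$: although $X_0$ consists of $n$ distinct points (and can be assumed to contain a $\mathcal{P}_{m-1}$-unisolvent subset, e.g.\ by the genericity discussion after Definition of unisolvent sets), the remark immediately preceding the proposition warns that uniqueness of trajectories may fail, so distinct landmarks could in principle collapse. I would handle this by restricting the statement to the set of times $t$ at which $X_t$ remains $\mathcal{P}_{m-1}$-unisolvent, noting this set has full measure (the complement being a closed set where collapse occurs, negligible in the relevant optimal regime), and then defining $\gamma_i(t)$ and $p_t$ arbitrarily on the remaining null set. A secondary but routine point is measurability of $t \mapsto (\gamma(t), p_t)$: this follows from continuity of the linear isomorphism $L_{X_t}$ from Proposition \ref{prop:induced_norm} in the parameters $X_t$, combined with the $1/2$-Hölder continuity of $t \mapsto X_t$ established in Proposition \ref{prop:holder_cont_traj}.
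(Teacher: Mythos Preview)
Your approach is essentially the same as the paper's: apply Proposition \ref{prop:exact_matching_cpd} pointwise in $t$ to replace $v_t$ by the minimal-seminorm interpolator $S_{G_{m,s},\mathcal{P}_{m-1},X_t}(v_t)$, note that this preserves the trajectories and does not increase the energy, and conclude by minimality that $v_t$ already has the stated form. The paper's proof is the terse two-line version of exactly this argument (omitting your unisolvency and measurability caveats); one small slip on your side is that Proposition \ref{prop:exact_matching_cpd} yields the \emph{seminorm} inequality $|S(v_t)|_{m,s}\le |v_t|_{m,s}$, as the paper writes, rather than the full-norm inequality $\|\widetilde v_t\|_{V_{m,s}}\le \|v_t\|_{V_{m,s}}$ you assert.
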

\begin{proof}
    From Proposition \ref{prop:exact_matching_cpd}, for all times $t \geq 0$, the projection $S_{G_{m,s},\mathcal{P}_{m-1},X_t}(v_t)$ is written as in Equation \eqref{eq:finite_spline_rep_without_tv} with $\gamma_i(t)$ verifying the constraints \eqref{eq:constraint_moments}, interpolates $v_t$ on $X_t$, and verifies $$|S_{G_{m,s},\mathcal{P}_{m-1},X_t}(v_t)|_{m,s} \leq |v_t|_{m,s}\,,$$with equality if and only if $v_t = S_{G_{m,s},\mathcal{P}_{m-1},X_t}(v_t)$. This completes the proof. 
\end{proof}

\begin{example}[Two particles case]\label{ex:two_particules}
    Let us illustrate through a simple but instructive example the non-completeness of the landmark manifold endowed with this metric. In dimension $2$, we consider the Energy-Distance kernel as a $(1,\frac{1}{2})$-spline (see \ref{subsubsec:ed_ker_m_is_1}). We denote $V$ the associated Beppo-Levi space. Let $x_+ = (\frac{1}{2}r_0,0)$ and $x_- = (-\frac{1}{2}r_0,0)$ be two points in $\R^2$. We consider the target points $x_+^\varepsilon = (\frac{1}{2}\varepsilon,0)$ and $x_-^\varepsilon = (-\frac{1}{2}\varepsilon,0)$, where $r_0 > \varepsilon > 0$. Let us suppose that there exists $v \in L^2([0,1],V)$ solution to the corresponding minimal energy matching problem \eqref{eq:min_prob_without_tv}. Thanks to the symmetries of the problem, the constant translation part of the optimal velocity field is zero, and the optimal moments belong in $\R(1,0)$. This means that the flow only depends on a time-continuous moment $t \mapsto \gamma_t \in \R$, in the sense that the optimal velocity field $v_t$ has the following form on $\R^2$:
    \begin{equation}
        \forall t \in [0,1],x \in \R^2, \exists \gamma_t \in \R \, |\, v_t(x) =  \begin{pmatrix} \gamma_t \\ 0   \end{pmatrix}  \cdot \left( |x-x_+(t)| - |x-x_-(t)| \right)\,.
    \end{equation}
    Denoting $r_t \coloneqq |x_+(t)-x_-(t)|$, the energy at time $t$ is given by:
    \begin{equation}
        \|v_t\|_{V}^2 = \gamma_t^2 r_t\,.
    \end{equation}
    Through a simple reparametrization argument \cite[Lemme 6]{glaun2005}, the optimal velocity fields are such that $t \mapsto \|v_t\|^2_{V}$ is constant in time, equal to $\lambda \coloneqq\gamma_0^2 r_0$.
    Let us suppose for now that $t \mapsto \gamma_t$ and $t \mapsto r_t$ are smooth functions on $(0,1)$, strictly positive with non-zero derivatives. Then, as
    \begin{equation}
        \begin{cases}
            x_+(t) = x_+ - \int_0^t \gamma_s r_sds \,, \\
            x_-(t) = x_- + \int_0^t \gamma_s r_sds \,,
        \end{cases}
    \end{equation}
    we get:
    \begin{equation}
        r_t = r_0 - 2\int_0^t \gamma_s r_sds \,,
    \end{equation}
    so that:
    \begin{equation}
        \dot{r_t} = - 2\gamma_t r_t \,.
    \end{equation}
    Moreover, the following conservation equation is verified:
    \begin{equation}\label{eq:conservation_equation}
        \gamma_t^2 \dot{ r_t} + 2r_t\gamma_t \dot{ \gamma_t} = 0\,,
    \end{equation}
    so that the final differential equations read:
    \begin{equation}
        \begin{cases}
            \dot{\gamma_t} = \gamma_t^2 \,, \\
            \dot{r_t} = -2 \sqrt{\lambda}\sqrt{r_t} \,.
        \end{cases}
    \end{equation}
    The first equation is a well-known diverging equation, whose solution with initial conditions $\gamma_0$ exists on the open time interval $[0,\frac{1}{\gamma_0})$, and is written as:
    \begin{equation}
        \gamma_t = \frac{\gamma_0}{1-\gamma_0 t}\,.
    \end{equation}
    The corresponding $r_t$, thanks to the constant value of $\|v_t\|_V^2$, is given by:
    \begin{equation}
        r_t = r_0 (1-\gamma_0 t)^2 \,.
    \end{equation}
    Choosing $\gamma_0 = 1-\sqrt{\frac{\varepsilon}{r_0}}$ solves the matching problem, and justifies all the computations above. The final matching energy to match $(x_+,x_-)$ with $(x_+^\varepsilon,x_-^\varepsilon)$ is then equal to $\lambda_{r_0,\varepsilon} = \left( 1 - \sqrt{\frac{\varepsilon}{r_0}} \right)^2 r_0 $.
    Let us make a few remarks. First, $\lambda_{r_0,\varepsilon}$ has a limit as $\varepsilon \rightarrow 0+$, equal to $r_0$. This proves that the landmark manifold is not complete for this metric: there exists a sequence $v^n \in L^2([0,1],V)$ with uniformly bounded energy such that $\underset{n \rightarrow \infty}{\lim } \, x_+^{v_n}(1) = \underset{n \rightarrow \infty}{\lim } x_-^{v_n}(1) = 0$. 
   Due to the non-uniqueness of solutions to the flow, there exists a solution to the matching Problem \eqref{eq:min_prob_without_tv} under the constraints $x_+(1) = x_-$ and $x_-(1) = x_+$, even in dimension one. Indeed, the two points can collapse at $0$ in finite time. Then, by reversing the solution in time, one can exchange the position of the two particles.
    
    However, this situation is specific to this configuration, as a small perturbation in the positions induces a smooth path. 
    Below is an illustration of the deformation generated on $\R^2$, and of a matching between the perturbed landmarks $\left(x_+ + \left( \begin{smallmatrix} 0 \\ \varepsilon \end{smallmatrix} \right), x_- - \left( \begin{smallmatrix} 0 \\ \varepsilon \end{smallmatrix} \right)\right)$ and $\left(x_- + \left( \begin{smallmatrix} 0 \\ \varepsilon \end{smallmatrix} \right), x_+ - \left( \begin{smallmatrix} 0 \\ \varepsilon \end{smallmatrix} \right)\right)$, where $\varepsilon = 10^{-3}$.
    \begin{figure}[h!]
        \centering
        \includegraphics[width=0.5\linewidth]{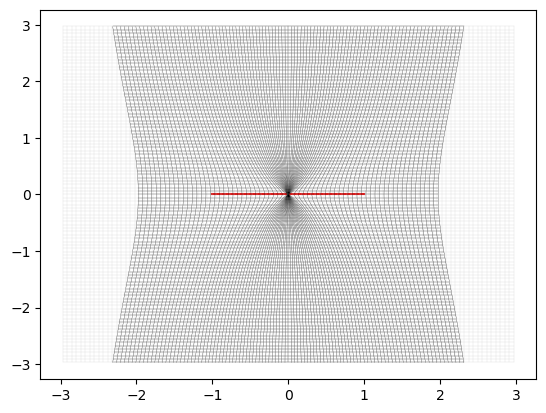}\includegraphics[width=0.45\linewidth]{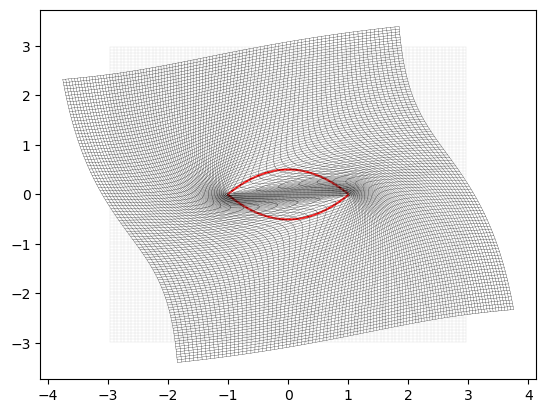}
        \caption{Obtained deformation map while matching two particles to $0$ (left), and between the landmarks $\left(x_+ + \left( \begin{smallmatrix} 0 \\ \varepsilon \end{smallmatrix} \right), x_- - \left( \begin{smallmatrix} 0 \\ \varepsilon \end{smallmatrix} \right)\right)$ and $\left(x_- + \left( \begin{smallmatrix} 0 \\ \varepsilon \end{smallmatrix} \right), x_+ - \left( \begin{smallmatrix} 0 \\ \varepsilon \end{smallmatrix} \right)\right)$ (right).}
        \label{fig:2_part_matching}
    \end{figure}
\end{example}

In summary, achieving homeomorphisms for the landmark problem requires regularization. We will examine a potential regularization approach in the following section. 

\subsection{Total variation and regularity}
In this section, we show that a bound on the total variation of the dual of the velocity field is sufficient to ensure Lipschitz regularity in the case of the ED kernel. 
Hereafter, $K$ denotes the Energy-Distance kernel, and $V_{m,s}$ is the corresponding Beppo-Levi space (see subsection \ref{subsec:choices_of_P}). In Example \ref{ex:two_particules}, $v_t$ is given by $v_t = K \star \Gamma_t$, where $\Gamma_t$ is the zero sum vector-valued measure defined by: 
\begin{equation}
    \Gamma_t \coloneqq \begin{pmatrix} \gamma_t \\ 0   \end{pmatrix} \left(\delta_{x_-(t)} - \delta_{x_+(t)}\right)\,.
\end{equation}
When matching $(x_+,x_-)$ to $(0_2,0_2)$, the total variation map $t \mapsto |\Gamma_t|_{TV}$ is not in $L^1([0,1])$. However, a regularity result can be obtained if we have such a bound on the total variation:
\begin{proposition}[Lipschitz regularity under $TV$ control]\label{prop:tv_control_existence_homeo_bilip}
    Let $t \mapsto v_t$ be a time-dependent velocity field written as:
    \begin{equation}
        v_t = K \star \Gamma_t\,,
    \end{equation}
    where $K$ is the Energy-Distance kernel and $\Gamma_t$ is a measure in $\mathcal{M}_{\mathcal{P}_{m-1}}$ such that:
    \begin{equation}
        \int_0^1 |\Gamma_t|_{TV} dt < +\infty \,.
    \end{equation}
    Then $v_t$ is Lipschitz for almost every $t \in [0,1]$ and there exists a bi-Lipschitz homeomorphism curve $t \mapsto \psi_t$ such that $\psi_t$ is the unique solution to the flow equation
    \begin{equation}
        \begin{cases}
            \partial_t \psi_t = v_t \circ \psi_t \, \\
            \psi_0 = \Id \,.
        \end{cases}
    \end{equation}
\end{proposition}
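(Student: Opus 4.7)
The plan is to use the fact that the Energy-Distance kernel $K(x) = -|x|$ is $1$-Lipschitz, combined with the classical Carathéodory-Grönwall theory for non-autonomous ODEs with integrable-in-time Lipschitz constant.

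First, I would establish a spatial Lipschitz bound on $v_t$. Writing $v_t(x) = -\int_{\R^d}|x-z|\,d\Gamma_t(z)$ and using the reverse triangle inequality $\bigl||x-z|-|y-z|\bigr|\leq|x-y|$ pointwise under the integral, I obtain
\begin{equation*}
|v_t(x) - v_t(y)|\leq|\Gamma_t|_{TV}\,|x-y|,
\end{equation*}
so $v_t$ is globally Lipschitz for a.e.\ $t$ with $\mathrm{Lip}(v_t)\leq|\Gamma_t|_{TV}$. The zero-mass property $\Gamma_t\in\mathcal{M}_{\mathcal{P}_{m-1}}$ (which contains $\mathcal{P}_0$) is crucial here: it allows me to subtract constants under the integral, rewriting $v_t(x)=-\int(|x-z|-|z|)\,d\Gamma_t(z)$, which is well-defined even when $\Gamma_t$ has no finite first moment, and yields the sub-linear growth bound $|v_t(x)|\leq|\Gamma_t|_{TV}\,|x|$ needed to prevent blow-up at infinity.

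Next, since $t\mapsto\mathrm{Lip}(v_t)$ is $L^1([0,1])$ by hypothesis, $v_t$ is a Carathéodory vector field with globally Lipschitz spatial dependence and integrable Lipschitz constant. The classical non-autonomous Cauchy-Lipschitz theorem then gives a unique absolutely continuous solution $t\mapsto\psi_t(x)$ of $\dot\psi_t=v_t\circ\psi_t$, $\psi_0=\Id$, defined on all of $[0,1]$ thanks to the linear growth bound. Applying Grönwall to $t\mapsto|\psi_t(x)-\psi_t(y)|$, together with the reverse inequality $\frac{d}{dt}|\psi_t(x)-\psi_t(y)|\geq-|\Gamma_t|_{TV}\,|\psi_t(x)-\psi_t(y)|$, yields the bi-Lipschitz estimates
\begin{equation*}
|x-y|\,e^{-\int_0^t|\Gamma_s|_{TV}ds}\;\leq\;|\psi_t(x)-\psi_t(y)|\;\leq\;|x-y|\,e^{\int_0^t|\Gamma_s|_{TV}ds}.
\end{equation*}
Injectivity is immediate from the lower bound, and surjectivity follows from solving the flow backward in time with $-v_{1-t}$, which satisfies the same TV hypothesis; continuity of the inverse is then a direct consequence of the reverse bi-Lipschitz estimate.

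The main obstacle I expect is handling the unboundedness of the kernel: in classical LDDMM one exploits $V\hookrightarrow\mathcal{C}_0^1$ via Theorem \ref{th:v_admi_diffeo}, whereas here $v_t$ is only Lipschitz and $K$ itself grows linearly at infinity, so the Lipschitz bound is the sharpest a priori regularity available. The zero-mass condition $\Gamma_t\in\mathcal{M}_{\mathcal{P}_{m-1}}$ is precisely what allows me both to define $v_t$ unambiguously and to control its growth; once past that point, the argument reduces to standard Carathéodory ODE theory, which bypasses the $C^1$ framework used for diffeomorphic flows and hence delivers only bi-Lipschitz homeomorphisms rather than true diffeomorphisms, in line with the broader expressivity claim of the paper.
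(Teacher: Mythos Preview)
Your proposal is correct and follows essentially the same approach as the paper: both establish the Lipschitz bound $|v_t(x)-v_t(y)|\leq|\Gamma_t|_{TV}\,|x-y|$ via the reverse triangle inequality, invoke Carathéodory/Cauchy--Lipschitz for the flow, and obtain the bi-Lipschitz property via Gr\"onwall together with the reversed flow $-v_{1-t}$. Your treatment is in fact slightly more careful than the paper's, in that you explicitly use the zero-mass constraint $\Gamma_t\in\mathcal{M}_{\mathcal{P}_{m-1}}$ to ensure $v_t$ is well-defined and of sub-linear growth, a point the paper leaves implicit.
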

\begin{proof}
    Since $\Gamma_t \in L^1([0,1],|\cdot|_{TV})$, the quantity $|\Gamma_t|_{TV}$ is bounded for almost every $t \in [0,1]$. Thus, for any $x,y \in \R^d$ and almost every $t$, we have:
    \begin{align}
        | v_t(x) - v_t(y) | &= \left| \int (|x-z| - |y-z|) d\Gamma_t(z) \right|  \\
        &\leq \int |x-y| d|\Gamma_t|_{TV}(z) \\
        | v_t(x) - v_t(y) | & \leq |\Gamma_t|_{TV} |x-y| \,.
    \end{align}
    This shows that $v_t$ is Lipschitz for almost every $t \in [0,1]$, with a Lipschitz constant such that: 
    \begin{align}
        \int_0^1 \text{Lip}(v_t)dt < +\infty\,.
    \end{align}
    Under these conditions, the existence of the flow $\psi_t$ for all $t \in [0,1]$ is well known, applying the Cauchy-Lipschitz theorem \cite{ambrosio_crippa_2014}. We can prove that this map is also Lipschitz. Indeed, if $x,y \in \R^d$, then:
    \begin{align}
        |\psi_t(x) - \psi_t(y)| &= \left| x - y + \int_0^t v_s (\psi_s(x)) - v_s(\psi_s(y))ds \right| \\
        & \leq |x-y| + \int_0^t |v_s (\psi_s(x)) - v_s (\psi_s(y)) |ds  \\
        |\psi_t(x) - \psi_t(y)| &\leq |x-y| + \int_0^t |\Gamma_s|_{TV} |\psi_s(x) - \psi_s(y)| ds \,,
    \end{align}
    so that a simple application of the Gr\"onwall Lemma yields:
    \begin{equation}\label{eq:estim_homeo_lip_tv}
        |\psi_t(x) - \psi_t(y)| \leq |x-y| \exp\left( \int_0^t |\Gamma_s|_{TV} ds \right) \,.
    \end{equation}
    Finally, defining the inverse flow $\psi^{-1}$ by:
    \begin{equation}
        \partial_t (\psi_t^{-1}) = -v_{1-t} \circ \psi_t^{-1} \,, 
    \end{equation}
    we can apply the same analysis, proving that $\psi_t$ is a bi-Lipschitz homeomorphism.
\end{proof}
Example \ref{ex:two_particules} above shows that the MMD functional $\Gamma \in \mathcal{M}_{\mathcal{P}_0} \mapsto \langle \Gamma , K \star \Gamma \rangle \in \R$ is not coercive (in general) with respect to the total variation norm. However, in the case of a sum of distinct Dirac masses, a bound on the TV norm can be obtained that depends on the minimal distance between points.
\begin{proposition}\label{prop:tv_maj_if_no_collpase}
    Let $\sigma,r>0$, and let $B_{\sigma r}$ be the closed ball of radius $\sigma r$ centered at $0$ in $\R^d$. Consider the subspace of $\mathcal{M}_{\mathcal{P}_{m-1}}(B_{\sigma r})$ defined as:
    \begin{equation}
        \mathcal{M}_{\mathcal{P}_{m-1},n,r}(B_{\sigma r}) = \left\{ \Gamma = \sum_{i=1}^n \gamma_i \delta_{x_i} \in \mathcal{M}_{\mathcal{P}_{m-1}}(B_{\sigma r})\, \bigg|\, \underset{i\neq j}{\min }|x_i - x_j| \geq r \right\}\,.
    \end{equation}
    Then, there exists $\lambda_\sigma > 0$ such that, for all $\Gamma \in \mathcal{M}_{\mathcal{P}_{m-1},n,r}(B_{\sigma r})$ we have:
    \begin{equation}\label{eq:control_tv_rkhs_npart}
        |\Gamma|_{TV} \leq \lambda_\sigma \sqrt{\frac{n}{r}} \|\Gamma\|_K \,,
    \end{equation}
    where $\|\Gamma\|_K = \langle \Gamma, K \star \Gamma \rangle$.
\end{proposition}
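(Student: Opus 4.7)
The plan is to reduce the claim to a uniform spectral lower bound for the kernel matrix restricted to the zero-moment subspace, and then obtain that lower bound from the strict $\mathcal{P}_{m-1}$-conditional positivity of $K$ through a scaling plus compactness argument.

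\textbf{Reducing the problem.} Using the $1$-homogeneity of $K(x,y) = -|x-y|$, the change of variables $\tilde x_i \coloneqq x_i/r$ maps an $r$-separated configuration in $B_{\sigma r}$ to a $1$-separated configuration in $B_\sigma$, preserves the $\mathcal{P}_{m-1}$-moment condition on $\gamma$, preserves $|\Gamma|_{TV}$, and rescales the kernel form as $\langle\Gamma,K\star\Gamma\rangle = r\,\langle\tilde\Gamma,K\star\tilde\Gamma\rangle$, so that \eqref{eq:control_tv_rkhs_npart} is equivalent to its $r=1$ version on $\mathcal{M}_{\mathcal{P}_{m-1},n,1}(B_\sigma)$. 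A packing argument (disjoint open balls of radius $1/2$ around the $\tilde x_i$, contained in $B_{\sigma+1/2}$) bounds $n$ by a finite integer $N_\sigma$, so $n$ ranges over a finite set. The Cauchy--Schwarz inequality $|\tilde\Gamma|_{TV} = \|\gamma\|_1 \leq \sqrt{n}\,\|\gamma\|_2$ combined with $\|\tilde\Gamma\|_K^2 = \gamma^T K_{\tilde X}\gamma$ then reduces the claim to finding $c_\sigma > 0$ such that
\begin{equation*}
    \gamma^T K_{\tilde X}\gamma \;\geq\; c_\sigma\, \|\gamma\|_2^2
\end{equation*}
for every $1$-separated configuration $\tilde X$ of $n \leq N_\sigma$ points in $B_\sigma$ and every $\gamma \in \ker P_{\tilde X}^T$; one then takes $\lambda_\sigma = 1/\sqrt{c_\sigma}$ and reverts the scaling.

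\textbf{Compactness and CPD contradiction.} I would prove this lower bound by contradiction: if it fails, then after restricting to a value of $n$ that occurs infinitely often, one obtains $1$-separated configurations $\tilde X^k = (\tilde x_1^k,\dots,\tilde x_n^k)$ in $B_\sigma$ and vectors $\gamma^k \in \ker P_{\tilde X^k}^T$ with $\|\gamma^k\|_2 = 1$ and $(\gamma^k)^T K_{\tilde X^k}\gamma^k \to 0$. Compactness of $B_\sigma^n$ and of the unit sphere in $\R^n$ yields, along a subsequence, limits $\tilde x_i^\infty$ and $\gamma^\infty$ with $\|\gamma^\infty\|_2 = 1$. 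The key observation is that $1$-separation is closed under limits, so the $\tilde x_i^\infty$ remain distinct; the moment conditions $\sum_i \gamma_i^k p(\tilde x_i^k) = 0$ pass to the limit by continuity of polynomial evaluation; and the quadratic form passes to the limit by continuity of $K$ off the diagonal. This produces a nonzero $\gamma^\infty$ with $\sum_i \gamma_i^\infty p(\tilde x_i^\infty) = 0$ for every $p \in \mathcal{P}_{m-1}$ and $\sum_{i,j}\gamma_i^\infty\gamma_j^\infty K(\tilde x_i^\infty,\tilde x_j^\infty) = 0$, contradicting the strict $\mathcal{P}_{m-1}$-conditional positivity of $K$ at distinct points (Definition \ref{defi:pcpd_ker}). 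Reverting the scaling delivers \eqref{eq:control_tv_rkhs_npart} with $\lambda_\sigma$ depending only on $\sigma$, $d$, and $m$.

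\textbf{Main obstacle.} The substantive step is the compactness argument, whose correctness hinges entirely on the fact that the separation assumption $|x_i - x_j| \geq r$ prevents any collision of points in the limit. This is precisely what guarantees that the hypotheses of the CPD definition remain valid on $(\tilde X^\infty, \gamma^\infty)$, and no quantitative estimate on the spectrum of $K_{\tilde X}$ (in the spirit of Narcowich--Ward type bounds) is required: the constant $c_\sigma$ is extracted purely from compactness, at the modest cost of a non-explicit dependence on $\sigma$.
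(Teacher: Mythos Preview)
Your proof is correct and follows essentially the same route as the paper: reduce to $r=1$ via the $1$-homogeneity of $K$, then use compactness of the admissible configurations together with the strict $\mathcal{P}_{m-1}$-CPD of $K$ to extract a uniform spectral lower bound, and finish with $|\Gamma|_{TV}\leq\sqrt{n}\,|\gamma|_2$. The only minor differences are organizational: the paper argues via continuity of $X\mapsto\rho_X$ on the compact set $\mathcal{S}(1,\sigma)$ rather than by contradiction with sequences, and it treats $n$ as fixed, whereas your packing observation making $n\leq N_\sigma$ explicit is a welcome clarification that $\lambda_\sigma$ is genuinely independent of $n$.
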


\begin{proof}
    Let us denote $L_{n,r} = \left\{ X = (x_1, \dots, x_n) \in \left(\R^d\right)^n \, \bigg|\, \underset{i\neq j}{\min }\,|x_i - x_j| \geq r\right\}$.
    First, remark that if $\Gamma \in \mathcal{M}_{\mathcal{P}_{m-1},n,r}(B_{\sigma r})$, then the scaled measure $d_{\lambda \#} \Gamma$, where $d_\lambda : x \mapsto \lambda x$, belongs to $\mathcal{M}_{\mathcal{P}_{m-1},n,\lambda r}(B_{\lambda \sigma r})$.
    
    Next, define $K_X$ the matrix with entries $K(x_i-x_j)$ and consider the linear space:
    \begin{equation}
        C_X = \left\{ \gamma = (\gamma_1, \dots, \gamma_N) \in \R^n \, \bigg|\, \forall p \in \mathcal{P}_{m-1}, \sum_{i=1}^n \gamma_i p(x_i) = 0 \right\}\,.
    \end{equation}
    Since the application $ \gamma \in C_X \cap \mathcal{S}_{n-1} \mapsto  \langle \gamma,K_X \gamma \rangle \in \R$ is continuous and takes strictly positive values, since $C_X \cap \mathcal{S}_{n-1}$ is compact, there exists $\rho_X > 0$ such that:
    \begin{equation}\label{eq:vp_maj_X}
        \forall \gamma \in C_X, \rho_X |\gamma|_2^2 \leq \langle \gamma,K_X \gamma \rangle \,,
    \end{equation}
    where $|\cdot|_2$ denotes the Euclidean distance in $\R^n$. This proves:
    \begin{equation}
        \forall \gamma \in C_X,  |\gamma|_2 \leq \sqrt{\frac{1}{\rho_X}} \sqrt{\langle \gamma,K_X u \rangle} \,.
    \end{equation}
    Now, for $\sigma,r>0$, define the set:
    \begin{equation}
        \mathcal{S}(r,\sigma) \coloneqq L_{n,r} \cap (B_{\sigma r })^n \,.
    \end{equation} 
    The mapping $\rho : X \mapsto \rho_X$ is continuous on $\mathcal{S}(r,\sigma)$, which is a compact set. Therefore, there exists $\rho_{r}(\lambda) > 0$ such that for all $X \in \mathcal{S}(r,\sigma)$, $\rho_{r}(\sigma) \leq \rho_X$. Now, let us prove, given $\sigma>0$, that $r \mapsto \rho_{r}(\sigma)$ is 1-homogeneous. From the equalities, for $r >0$: $\mathcal{S}(r,\sigma) = r\mathcal{S}(1,\sigma)$ and $K_{r X} = r K_{X}$ for any collection of distinct points $X$, we get the equivalences:
    \begin{align}
        &\forall X \in \mathcal{S}(1,\sigma),\forall \gamma \in C_X, \rho_{1}(\sigma) |\gamma|_2^2 \leq \langle \gamma, K_X \gamma \rangle \\
        &\Leftrightarrow \forall X \in \mathcal{S}(1,\sigma),\forall \gamma \in C_{r X}, \rho_{1}(\sigma) |\gamma|_2^2 \leq \frac{1}{r} \langle \gamma, K_{r X} \gamma \rangle \\
        &\Leftrightarrow \forall Y \in \mathcal{S}( r,\sigma),\forall \gamma \in C_Y, r \rho_{1}(R) |\gamma|_2^2 \leq \langle \gamma, K_Y \gamma \rangle\,.
    \end{align}
    Thus, for any $X \in L_{n,r} \cap (B_{\sigma r})^n =\mathcal{S}(r,\sigma)$ and $\gamma \in C_X$, we have:
    \begin{equation}
        r\rho_1(\sigma)|\gamma|_2^2 \leq \langle \gamma, K_X \gamma \rangle \,.
    \end{equation}
    Now, denoting $\lambda_\sigma \coloneqq \rho_1(\sigma)^{-1/2}$, we get:
    \begin{equation}\label{eq:control_L2_norm_rkhs}
        \forall X \in  L_{n,r} \cap (B_{\sigma r})^n, \forall \gamma \in C_X, |\gamma|_2 \leq \lambda_\sigma \sqrt{\frac{1}{r}}\sqrt{\langle \gamma,K_X \gamma \rangle} \,.
    \end{equation}
    The result follows from the fact that for any $\Gamma \in \mathcal{M}_{\mathcal{P}_{m-1},n,r}(B_{\sigma r})$, there exists $X = (x_1, \dots,x_n) \in L_{n,r} \cap (B_{\sigma r})^n$ and $\gamma \in C_X$ such that:
    $
        \Gamma = \sum_{i=1}^n \gamma_i \delta_{x_i}\,,
    $
    and by remarking that $|\Gamma|_{TV} = \sum_{i=1}^n |\gamma_i| \leq \sqrt{n}|\gamma|_2$ and $\|\Gamma\|_K = \sqrt{\langle \gamma , K_X \gamma \rangle}$.
\end{proof}
\begin{remark}
    Note that this result immediately generalizes to vector-valued measures. If $\Gamma$ is written as:
    $
        \Gamma = \sum_{i=1}^n \gamma_i \delta_{x_i} \,,$
    where $\gamma_i = (\gamma_{i,1},\dots,\gamma_{i,d}) \in \R^d$ for all $1 \leq i \leq n$, then:
    \begin{equation}
        |\Gamma|_{TV} = \sum_{i=1}^n \sqrt{\sum_{k=1}^d |\gamma_{i,k}|^2} \leq \lambda_{\sigma}\sqrt{\frac{n}{r}} \sqrt{ \sum_{k=1}^d \langle \gamma_{\cdot,k} , K_X \gamma_{\cdot,k} \rangle}\,.
    \end{equation} 
\end{remark}

\begin{example}
    In the limit $\varepsilon \rightarrow 0$ in Example \ref{ex:two_particules},  the solution moment measure $\Gamma_t$ satisfies $|\Gamma_t|_{TV} = \frac{1}{1-t}$ and $\|\Gamma_t\|_K = r_0$ for all times $t \in [0,1)$. Additionally, the distance between the two particles at time $0\leq t < 1$ is given by $r_t = r_0(1-t)^2$, so that $\Gamma_t$ is in the measure set $\mathcal{M}_{\mathcal{P}_{0},2,r_t}(B_{r_t}) = \mathcal{M}_{\mathcal{P}_{0},2, r_0(1-t)^2}(B_{r_0(1-t)^2})$. Applying Proposition \ref{prop:tv_maj_if_no_collpase} to $\Gamma_t$ provides 
    \begin{equation}
        |\Gamma_t|_{TV} \leq  \lambda_{1}\sqrt{\frac{2}{r_0 (1-t)^2}r_0} = \frac{\sqrt{2}\lambda_{1} }{1-t} \,,
    \end{equation}
    corresponding with the computations, so that the estimate \eqref{eq:control_tv_rkhs_npart} seems sharp.
\end{example}

\subsection{Two different regularizations ensuring bi-Lipschitz homeomorphisms}

To define models that guarantee bi-Lipschitz homeomorphisms, we propose two different approaches, which both lead to a control on the TV norm of $\Gamma$. The first model consists in directly adding the total variation in the formulation. This model can be applied to any type of data, such as clouds of points, measures, etc... The second model is tailored to probability measures and is easier to solve numerically.

\subsubsection{TV regularization on the dual space}
Let us define the measure space:
\begin{equation}\label{EqNormsTV_ED}
    \mathcal{B}_{0,TV}\coloneqq \left\{ \Gamma \in \mathcal{M}_0^d(\Omega) \,|\, |\Gamma|_{TV} < +\infty \right\} \,.
\end{equation}
Similarly, we define $\mathcal{B}_{TV}$ without the total mass constraint.
 Here, the total variation norm $|\cdot|_{TV}$ is defined through the duality with $(\mathcal{C}^0(\Omega),\|\cdot\|_{\infty})$ the space of bounded continuous function on $\Omega \subset \R^d$, a compact bounded domain.
In this section, we consider the energy: 
\begin{equation}\label{eq:mixed_model}
    E^{\beta} : \Gamma \in L^2([0,1],\mathcal{B}_{TV}) \mapsto  \mathcal{L}(\psi_\Gamma(1) _\# \mu_0 , \nu) +  \beta\int_0^1 |\Gamma_t|_{TV}^2dt \,,
\end{equation}
where $\psi_\Gamma(1)$ is the flow at time $1$ defined by $\psi_\Gamma(0) = \Id$ and $\partial_t \psi_{\gamma_t} = v_t \circ \psi_{\gamma_t}$ with $v_t = K \star \gamma_t$.
The goal of this subsection is to prove, provided that the loss $\mathcal{L}$ is lower semi-continuous for the weak-* topology, the following existence result:
\begin{theorem}\label{th:matching_tv_regu}
    The matching problem
    \begin{equation}
        \underset{\Gamma \in L^2([0,1],\mathcal{B}_{TV})}{\inf}  E^{\beta}(v) = \mathcal{L}(\psi_\Gamma(1) _\# \mu_0 , \nu) +  \beta \int_0^1 |\Gamma_t|_{TV}^2dt
    \end{equation}
    admits a solution in $L^2([0,1],\mathcal{B}_{TV})$. Moreover, the matching solution $\psi_{\Gamma}(1)$ is bi-Lipschitz homeomorphism.
\end{theorem}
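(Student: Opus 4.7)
The plan is to apply the direct method of the calculus of variations combined with Proposition \ref{prop:tv_control_existence_homeo_bilip} for the regularity of the flow. Let $\Gamma^n \in L^2([0,1], \mathcal{B}_{TV})$ be a minimizing sequence, so $M := \sup_n \int_0^1 |\Gamma^n_t|_{TV}^2\, dt$ is finite. Cauchy--Schwarz gives the uniform $L^1$ bound $\int_0^1 |\Gamma^n_t|_{TV}\, dt \leq \sqrt{M}$, so Proposition \ref{prop:tv_control_existence_homeo_bilip} applies uniformly in $n$: each flow $\psi^n_t$ is a bi-Lipschitz homeomorphism with Lipschitz constant at most $e^{\sqrt{M}}$, and by Gr\"onwall the orbits of $\mathrm{supp}(\mu_0)$ stay in a fixed compact set. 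Using the crude bound $|v^n_t(x)| \leq C |\Gamma^n_t|_{TV}$ on this compact set (where $C$ depends only on its diameter, since $|K|$ is bounded there), Cauchy--Schwarz yields the uniform $1/2$-H\"older estimate $|\psi^n_t(x) - \psi^n_s(x)| \leq C \sqrt{M}\,\sqrt{|t-s|}$.

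Combining the uniform Lipschitz bound in $x$ with the uniform H\"older bound in $t$, Arzel\`a--Ascoli produces (up to a subsequence not relabelled) a uniform limit $\psi^n \to \psi^\infty$ on $[0,1] \times K$ for every compact $K$, with $\psi^\infty_t$ still uniformly Lipschitz. In parallel, the $L^2$ bound on $|\Gamma^n_\cdot|_{TV}$ gives weak-* compactness in $L^2([0,1], \mathcal{M}(\Omega))$, viewed as the dual of $L^2([0,1], C(\Omega))$; a further extraction yields $\Gamma^n \rightharpoonup^* \Gamma^\infty$. Convex weak-* lower semicontinuity of $\Gamma \mapsto \int_0^1 |\Gamma_t|_{TV}^2\, dt$ then gives $\int_0^1 |\Gamma^\infty_t|_{TV}^2\, dt \leq \liminf_n \int_0^1 |\Gamma^n_t|_{TV}^2\, dt$, so in particular $\Gamma^\infty \in L^2([0,1], \mathcal{B}_{TV})$.

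The main obstacle is identifying $\psi^\infty$ as the flow generated by $K \star \Gamma^\infty$, by passing to the limit in
\begin{equation*}
\psi^n_t(x) = x + \int_0^t \int K(\psi^n_s(x) - y)\, d\Gamma^n_s(y)\, ds,
\end{equation*}
since the test function $(s,y) \mapsto K(\psi^n_s(x) - y)$ depends on $n$. I would split it as $K(\psi^\infty_s(x) - y) + [K(\psi^n_s(x)-y) - K(\psi^\infty_s(x)-y)]$. The first piece is a fixed test function in $L^2([0,1], C(\Omega))$, so the weak-* convergence of $\Gamma^n$ handles it directly. The second piece, using that $z \mapsto -|z|$ is $1$-Lipschitz, is dominated in absolute value by $\|\psi^n - \psi^\infty\|_{L^\infty([0,1]\times \Omega)} \int_0^1 |\Gamma^n_s|_{TV}\, ds \leq \sqrt{M}\,\|\psi^n - \psi^\infty\|_{\infty}$, which vanishes by uniform convergence. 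This produces the identity $\psi^\infty_t(x) = x + \int_0^t (K \star \Gamma^\infty_s)(\psi^\infty_s(x))\, ds$ for every $x$.

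To conclude, uniform convergence $\psi^n_1 \to \psi^\infty_1$ on $\mathrm{supp}(\mu_0)$ gives $(\psi^n_1)_\#\mu_0 \rightharpoonup (\psi^\infty_1)_\#\mu_0$ weakly. Combining the assumed weak-* lower semicontinuity of $\mathcal{L}$ with the TV lower semicontinuity established above yields $E^\beta(\Gamma^\infty) \leq \liminf_n E^\beta(\Gamma^n) = \inf E^\beta$, so $\Gamma^\infty$ is a minimizer. Finally, the bi-Lipschitz property of $\psi^\infty_1$ follows from a direct application of Proposition \ref{prop:tv_control_existence_homeo_bilip} to $\Gamma^\infty$, whose $L^1$-TV norm is finite by Cauchy--Schwarz.
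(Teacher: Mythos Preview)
Your proof is correct and rests on the same core ingredients as the paper (weak-* compactness of the momenta, the Lipschitz control from Proposition~\ref{prop:tv_control_existence_homeo_bilip}, and passage to the limit in the flow equation), but the organization is different. The paper first isolates, as a standalone Proposition~\ref{ThContinuityOfTheFlow}, the continuity of the map $\Gamma \mapsto \psi_\Gamma(1)$: given $\Gamma^k \to \Gamma$, it compares $\psi^k$ directly to the flow $\psi$ of the \emph{already known} limit $\Gamma$ via a Gr\"onwall argument, splitting $v^k_s(\psi^k_s(x)) - v_s(\psi_s(x))$ and using equicontinuity in $x$ of $m_x(\Gamma) = \int_0^t K\star\Gamma_s(\psi_s(x))\,ds$; existence of a minimizer is then an immediate corollary of compactness plus continuity. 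You instead extract the limiting flow $\psi^\infty$ first via Arzel\`a--Ascoli (from your $1/2$-H\"older-in-$t$, Lipschitz-in-$x$ estimates) and only afterwards identify it with $\psi_{\Gamma^\infty}$ by passing to the limit in the integral equation, splitting at the level of the test function $K(\psi^n_s(x)-y)$ rather than the velocity. Your route avoids the Gr\"onwall loop and is arguably more direct for the existence statement; the paper's route yields the slightly stronger abstract continuity of the flow map as a by-product.
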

First, let us state a few facts:
\begin{proposition}
    Bounded balls in $\mathcal{B}_{TV}$ are compact for the weak-* topology.
\end{proposition}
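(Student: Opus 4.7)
The plan is to recognize this proposition as a direct application of the Banach--Alaoglu theorem, since $\mathcal{B}_{TV}$ coincides (by the Riesz--Markov representation theorem) with the topological dual of the Banach space $(C^0(\Omega,\mathbb{R}^d),\|\cdot\|_\infty)$, the TV norm being precisely the dual norm. Since $\Omega$ is a compact bounded domain, $C^0(\Omega,\mathbb{R}^d)$ is a separable Banach space, so the weak-* topology on bounded subsets of the dual is metrizable and sequential compactness is equivalent to topological compactness.

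Concretely, I would proceed as follows. First, I would identify any element $\Gamma \in \mathcal{B}_{TV}$ with a continuous linear functional on $C^0(\Omega,\mathbb{R}^d)$ via $f \mapsto \int_\Omega f \cdot d\Gamma$, and recall that $|\Gamma|_{TV}$ is exactly the operator norm of this functional. Then I would invoke Banach--Alaoglu: the closed ball $\{\Gamma : |\Gamma|_{TV} \le R\}$ is weak-* compact in the dual of $C^0(\Omega,\mathbb{R}^d)$. This already yields the stated result for $\mathcal{B}_{TV}$.

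For the zero-mean subspace $\mathcal{B}_{0,TV}$ (which is what is actually used later in the paper), I would additionally argue that the constraints defining the moment conditions pass to the weak-* limit. Indeed, the conditions $\int p \, d\Gamma = 0$ for $p \in \mathcal{P}_{m-1}$ (or simply for constants, in the zero-total-mass case) are of the form $\langle \Gamma, p\rangle = 0$, and since polynomials are continuous on the compact set $\Omega$, they lie in the predual $C^0(\Omega,\mathbb{R}^d)$. Hence $\mathcal{B}_{0,TV}$ is weak-* closed in $\mathcal{B}_{TV}$, and its bounded balls are therefore weak-* compact as closed subsets of weak-* compact balls.

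The argument is essentially a textbook invocation of Banach--Alaoglu, so there is no serious obstacle. The only point worth care is making sure the predual identification is the right one for the setting: since $\Omega$ is a compact bounded domain and we deal with finite vector measures, $C^0(\Omega,\mathbb{R}^d)$ (continuous on the compact set) is the correct predual and the Riesz--Markov theorem applies without any decay-at-infinity subtlety.
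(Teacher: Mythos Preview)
Your proposal is correct and follows essentially the same route as the paper: both invoke the Banach--Alaoglu theorem, using that $\mathcal{B}_{TV}$ sits inside the dual of $C^0(\Omega,\mathbb{R}^d)$ with the TV norm as the dual norm. Your version is in fact more detailed than the paper's one-line argument---you spell out the Riesz--Markov identification, the metrizability via separability of the predual, and the weak-* closedness of the zero-mean constraint defining $\mathcal{B}_{0,TV}$, a point the paper uses later without explicitly stating.
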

\begin{proof}
    Since $|\cdot|_{TV} \leq \frac{1}{\beta}\|\cdot\|_{\mathcal{B}_{TV}} $, bounded balls in $\mathcal{B}_{TV}$ are bounded for the total variation norm, so that they are compact for the weak-* topology by the Banach-Alaoglu theorem. 
\end{proof}
Next, the space $L^2([0,1],(\mathcal{M}_0^d(\Omega),|\cdot|_{TV}))$ is defined through the duality with $L^2([0,1],(\mathcal{C}^0(\Omega),\|\cdot\|_{\infty}))$. We still refer to the induced topology as the weak-* topology, and we get the result:
\begin{proposition}\label{prop:L_2_comp_weak_topo}
    Bounded balls in $L^2([0,1],(\mathcal{M}^d(\Omega),\|\cdot\|_{\mathcal{B}_{TV}}))$ are compact for the weak-* topology.
\end{proposition}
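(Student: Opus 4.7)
The plan is to apply the Banach--Alaoglu theorem in its standard form: any bounded ball in the topological dual of a Banach space is compact for the weak-$*$ topology. By the definition given immediately before the statement, the space $L^2([0,1],(\mathcal{M}^d(\Omega),|\cdot|_{TV}))$ is introduced as the topological dual of the Bochner space $L^2([0,1],(\mathcal{C}^0(\Omega),\|\cdot\|_{\infty}))$ via the pairing
\begin{equation*}
\langle \Gamma, \varphi \rangle \;=\; \int_0^1 \langle \Gamma_t, \varphi_t \rangle \, dt,
\end{equation*}
and the weak-$*$ topology used in the statement is precisely the one induced by this pairing. Once the predual is identified as a bona fide Banach space, the conclusion follows directly.

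The only point to verify is therefore that $L^2([0,1],\mathcal{C}^0(\Omega))$ is indeed a Banach space to which Banach--Alaoglu applies cleanly. Since $\Omega$ is a compact domain in $\R^d$, the space $(\mathcal{C}^0(\Omega),\|\cdot\|_\infty)$ is a separable Banach space, as a countable dense family is provided by polynomials with rational coefficients via Stone--Weierstrass. The Bochner space of square-integrable functions from the $\sigma$-finite separable measure space $([0,1], dt)$ with values in a separable Banach space is itself a separable Banach space, so $L^2([0,1],\mathcal{C}^0(\Omega))$ has the structure required to invoke Banach--Alaoglu on its dual.

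A further useful consequence of the separability of the predual is that the weak-$*$ topology on bounded balls of the dual is metrizable. Hence the weak-$*$ compact balls obtained above are in fact sequentially compact, which is the formulation that will be exploited in the next step of \Cref{th:matching_tv_regu}: from a minimizing sequence $\Gamma^k$ bounded in $L^2([0,1],\mathcal{B}_{TV})$, one will extract a weak-$*$ convergent subsequence and pass to the limit in the energy and in the flow equation. The main (and really the only) subtlety is that passage to the limit in the non-linear map $\Gamma \mapsto \psi_\Gamma(1)_\#\mu_0$ is not handled by this proposition in itself, but the weak-$*$ compactness furnished here is the essential compactness input for that argument.
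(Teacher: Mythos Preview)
Your proposal is correct and follows the same approach as the paper, which simply writes ``Same proof as above'' in reference to the Banach--Alaoglu argument used for the preceding proposition. Your additional remarks on separability of the predual and the resulting metrizability/sequential compactness are not in the paper but are helpful, since the subsequent proof of Theorem~\ref{th:matching_tv_regu} does extract convergent subsequences.
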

\begin{proof}
    Same proof as above.
\end{proof}
\begin{proposition}
    The map $\mu \in \mathcal{M}(\Omega) \mapsto \langle \mu , K \star \mu \rangle$ is continuous for the weak-* topology.
\end{proposition}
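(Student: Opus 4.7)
The plan is to reduce the claim to weak-* convergence of the product measures $\mu_n \otimes \mu_n$ and then test against the bounded continuous kernel. Since $\Omega$ is compact, $K(x,y) = -|x-y|$ belongs to $C(\Omega \times \Omega)$ and $\|K\|_\infty \leq \diam(\Omega)$. Fubini's theorem gives
\[
\langle \mu, K \star \mu \rangle = \iint_{\Omega \times \Omega} K(x,y) \, d\mu(x) d\mu(y) = \int_{\Omega \times \Omega} K \, d(\mu \otimes \mu),
\]
so if $\mu_n \rightharpoonup \mu$ in the weak-* topology of $\mathcal{M}(\Omega)$ (with $|\mu_n|_{TV}$ uniformly bounded, as holds on any weak-* compact ball by Banach--Steinhaus), the task becomes showing $\int K \, d(\mu_n \otimes \mu_n) \to \int K \, d(\mu \otimes \mu)$.

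The first step is to handle simple tensor test functions. For $f,g \in C(\Omega)$ and $F(x,y) = f(x) g(y)$, applying weak-* convergence twice yields
\[
\int F \, d(\mu_n \otimes \mu_n) = \Bigl(\int f\, d\mu_n\Bigr)\Bigl(\int g\, d\mu_n\Bigr) \longrightarrow \Bigl(\int f\, d\mu\Bigr)\Bigl(\int g\, d\mu\Bigr) = \int F \, d(\mu \otimes \mu).
\]
By linearity this extends to finite sums of such tensors, forming an algebra $\mathcal{A} \subset C(\Omega \times \Omega)$ that separates points and contains constants, hence is dense in $(C(\Omega \times \Omega), \|\cdot\|_\infty)$ by the Stone--Weierstrass theorem.

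For the final step I would run a standard $3\varepsilon$ argument: given $\varepsilon > 0$, choose $G \in \mathcal{A}$ with $\|K - G\|_\infty < \varepsilon$, and write
\[
\Bigl|\int K\, d(\mu_n \otimes \mu_n) - \int K\, d(\mu \otimes \mu)\Bigr| \leq \bigl(|\mu_n|_{TV}^2 + |\mu|_{TV}^2\bigr)\|K - G\|_\infty + \Bigl|\int G\, d(\mu_n \otimes \mu_n) - \int G\, d(\mu \otimes \mu)\Bigr|.
\]
The second term tends to zero by the tensor case, and the first is controlled by $\varepsilon$ times the uniform TV-bound on the ball, yielding the desired convergence.

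The only (mild) subtlety is that weak-* convergence of signed measures does not by itself imply boundedness of the total variations, so the statement must be read as continuity on norm-bounded (equivalently, weak-* compact) balls. This is exactly the regime in which the map is invoked in the proof of Theorem~\ref{th:matching_tv_regu}, so this reading is both the intended and the useful one, and the Stone--Weierstrass argument above suffices.
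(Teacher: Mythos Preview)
Your proof is correct and takes a different route from the paper. The paper argues via the polarization identity
\[
\langle \mu, K\star\mu \rangle - \langle \nu, K\star\nu \rangle = \langle \mu-\nu, K\star(\mu-\nu) \rangle + 2\langle \nu, K\star(\mu-\nu) \rangle,
\]
together with the bound $\|K\star\mu\|_\infty \leq \diam(\Omega)\,|\mu|_{TV}$ and a Cauchy--Schwarz estimate; this is a purely algebraic two-term splitting. You instead rewrite the functional as $\int K\,d(\mu\otimes\mu)$, establish $\mu_n\otimes\mu_n \rightharpoonup \mu\otimes\mu$ on tensor test functions, and close via Stone--Weierstrass and a $3\varepsilon$ bound. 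Your approach is slightly longer but more transparent, and it immediately generalises to any continuous kernel on a compact domain without using the bilinear/quadratic structure; the paper's approach is shorter but leans on the symmetry of $K$ and leaves the control of the quadratic term $\langle \mu_n-\mu, K\star(\mu_n-\mu)\rangle$ implicit.

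One small internal inconsistency: your final paragraph says weak-* convergence ``does not by itself imply boundedness of the total variations'', yet earlier you (correctly) invoked Banach--Steinhaus to obtain exactly this bound. For \emph{sequences} in the dual of a Banach space, weak-* convergence always forces $\sup_n|\mu_n|_{TV}<\infty$; the obstruction only arises for nets. Since the application in Theorem~\ref{th:matching_tv_regu} concerns minimising sequences, sequential continuity is all that is needed, and your argument delivers it without the caveat. You can simply drop the last paragraph.
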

\begin{proof}
    If $\Omega$ is compact, then $K \star \mu$ is bounded by $|\mu|_{TV}\operatorname{diam}(\Omega)$ for any $\mu$. The identity $\langle \mu, K \star \mu \rangle - \langle \nu, K \star \nu \rangle = \langle \mu - \nu, K \star (\mu - \nu) \rangle + 2\langle \nu, K \star (\mu - \nu)\rangle$, combined with the Cauchy-Schwarz inequality,  proves the result. 
\end{proof}
Let us consider the flow application
\begin{equation}
        \psi(1): \begin{array}{ccl}
                L^2([0,1],\mathcal{B}_{TV}) &\to& \mathcal{C}^0(\Omega,\R^d) \\
                \Gamma &\mapsto& \psi_\Gamma(1)\,.
            \end{array}
    \end{equation}
\begin{proposition}\label{ThContinuityOfTheFlow}
    The application $\psi(1)$ is continuous from $L^2([0,1],\mathcal{B}_{TV})$ to $(\mathcal{C}^0(\Omega,\R^d),\|\cdot\|_\infty)$.
\end{proposition}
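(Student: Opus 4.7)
The plan is to establish continuity via a Grönwall argument built on two elementary bounds that follow directly from the structure of the ED kernel. For any $\Gamma_t \in \mathcal{B}_{TV}$, the induced velocity $v_t = K \star \Gamma_t$ is globally Lipschitz with constant $|\Gamma_t|_{TV}$ (as already used in Proposition \ref{prop:tv_control_existence_homeo_bilip}), and for two momenta $\Gamma_t, \tilde\Gamma_t$ both supported in the compact domain $\Omega$, a direct computation gives the error bound
$$|v_t(y) - \tilde v_t(y)| \;\leq\; \int |y-z|\, d|\Gamma_t-\tilde\Gamma_t|(z) \;\leq\; (|y| + \diam(\Omega))\,|\Gamma_t - \tilde\Gamma_t|_{TV}.$$

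Now let $\Gamma^n \to \Gamma$ in $L^2([0,1],\mathcal{B}_{TV})$. The first step is to show that the trajectories $\psi_{\Gamma^n}(t,x)$ stay in a common ball $B_R$ for all $x \in \Omega$, $t \in [0,1]$, and $n$ large. This follows from the linear-growth bound $|v^n_t(y)| \leq |\Gamma^n_t|_{TV}(|y|+\diam(\Omega))$ (obtained by bounding $|v^n_t(x_0)|$ for an $x_0 \in \Omega$ using the support of $\Gamma^n_t$), together with Grönwall applied to the integral equation for $\psi_{\Gamma^n}$. The total integral $\int_0^1 |\Gamma^n_s|_{TV}\,ds$ is uniformly bounded by $\|\Gamma^n\|_{L^2}$ via Cauchy--Schwarz, which is itself bounded by $L^2$-convergence.

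The second step is the core decomposition. For $x \in \Omega$, write
\begin{align*}
\psi_{\Gamma^n}(t,x) - \psi_\Gamma(t,x)
&= \int_0^t \bigl[v^n_s(\psi_{\Gamma^n}(s,x)) - v^n_s(\psi_\Gamma(s,x))\bigr]\,ds \\
&\quad + \int_0^t \bigl[v^n_s(\psi_\Gamma(s,x)) - v_s(\psi_\Gamma(s,x))\bigr]\,ds.
\end{align*}
The first integral is bounded by $\int_0^t |\Gamma^n_s|_{TV}\,|\psi_{\Gamma^n}(s,x) - \psi_\Gamma(s,x)|\,ds$ using the Lipschitz bound on $v^n_s$. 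The second integral, using the uniform trajectory bound $R$ from step one, is bounded by $(R + \diam(\Omega))\int_0^1 |\Gamma^n_s - \Gamma_s|_{TV}\,ds$ uniformly in $x$, and this tends to $0$ by Cauchy--Schwarz and the $L^2$-convergence of $\Gamma^n$ to $\Gamma$.

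The third and final step is Grönwall's inequality, yielding
$$\sup_{x \in \Omega}\,|\psi_{\Gamma^n}(1,x) - \psi_\Gamma(1,x)| \;\leq\; (R+\diam(\Omega))\,\|\Gamma^n-\Gamma\|_{L^2}\,\exp\!\Bigl(\textstyle\int_0^1 |\Gamma^n_s|_{TV}\,ds\Bigr),$$
whose right-hand side vanishes as $n \to \infty$ since the exponential factor is uniformly bounded. The main obstacle worth being careful about is precisely the uniform a priori bound on the trajectories in step one: because the ED kernel $K(y,z) = -|y-z|$ is unbounded on $\R^d$, without confining both $\psi_{\Gamma^n}(s,x)$ and $\psi_\Gamma(s,x)$ to a fixed compact set the error term from the second integral cannot be controlled in terms of the TV norm alone.
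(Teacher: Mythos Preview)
Your argument is correct and follows the same overall architecture as the paper's proof: the same decomposition of $\psi_{\Gamma^n}(t,x)-\psi_\Gamma(t,x)$ into a Lipschitz term (controlled by $|\Gamma^n_s|_{TV}$) and a velocity-error term, followed by Gr\"onwall. The difference lies in how the second integral is handled. The paper treats $m_x(\Gamma)=\int_0^t K\star\gamma_s(\psi_s(x))\,ds$ as a family of continuous linear functionals indexed by $x\in\Omega$, argues equicontinuity from the flow's Lipschitz estimate, and invokes an Arzel\`a--Ascoli-type argument to upgrade pointwise convergence to uniform convergence in $x$. You instead exploit the explicit form of the ED kernel to get the direct bound $|v^n_s(y)-v_s(y)|\leq (|y|+C_\Omega)|\Gamma^n_s-\Gamma_s|_{TV}$, which immediately gives a uniform-in-$x$ estimate once the trajectories are confined to $B_R$. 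Your route is more elementary and yields an explicit quantitative bound (essentially local Lipschitz continuity of $\psi(1)$), whereas the paper's equicontinuity argument is more abstract and would generalize to other continuous kernels. One small point: in step two you evaluate the error term along $\psi_\Gamma(s,x)$, not $\psi_{\Gamma^n}(s,x)$, so the constant $R$ must also bound the \emph{limit} flow; this follows by the same linear-growth/Gr\"onwall argument applied to $\Gamma$ itself, but you should state it.
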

\begin{proof}
    Let $(\Gamma_k) \in L^2([0,1],\mathcal{B}_{TV})^\mathbb{N}$ such that $\Gamma_k \underset{k \rightarrow \infty}{\longrightarrow} \Gamma$ for the $\mathcal{B}_{TV}$ topology.
    Denote $\psi^k_s$ and $\psi_s$ the associated flows, corresponding to velocity fields $v^k_s$ and $v_s$. These flows exist by Proposition \ref{prop:tv_control_existence_homeo_bilip}, as $|\cdot|_{TV} \leq \frac{1}{\beta}\|\cdot\|_{\mathcal{B}_{TV}} $. Let $x \in \Omega$. Then:
    \begin{align}
        |\psi^k_t(x) - \psi_t(x)| &= \left| \int_0^t v^k_s(\psi^k_s(x)) -  v_s(\psi_s(x)) ds  \right| \\
        & \leq \int_0^t |v^k_s(\psi^k_s(x)) -  v^k_s(\psi_s(x))| ds + \left| \int_0^t v^k_s(\psi_s(x)) -  v_s(\psi_s(x)) ds  \right| \,.
    \end{align}
For the first term:
\begin{align}
    \int_0^t |v^k_s(\psi^k_s(x)) -  v^k_s(\psi_s(x))| ds & \leq \int_0^t \text{Lip}(v^k_s) |\psi^k_s(x) - \psi_s(x)|ds \\
    & \leq \frac{1}{\beta}\|\psi^k_s - \psi_s\|_{\infty} \int_0^t \|\Gamma^k_s\|_{\mathcal{B}_{TV}} ds \,.
\end{align}
For the second term, we define
\begin{equation}
    m_x(\Gamma) \coloneqq \int_0^t K\star \gamma_s (\psi_s(x))ds \,.
\end{equation}
which is a continuous linear application from $\mathcal{B}_{TV}$ to $\mathbb{R}^d$. From estimates \eqref{eq:estim_homeo_lip_tv}, the family $\{m_x ; x \in \Omega\}$ is equicontinuous. Using the Arzelà-Ascoli theorem, this ensures uniform convergence, in the sense
\begin{equation}
    \underset{k \rightarrow \infty}{\lim} \lambda_k \coloneq \underset{x \in \Omega}{\sup} \left| m_x(v) - m_x(v^k) \right| = 0 \,.
\end{equation}
We get
\begin{equation}
    \|\psi^k_s - \psi_s\|_{\infty} \leq \lambda_k + \frac{1}{\beta} \|\psi^k_s - \psi_s\|_{\infty} \int_0^t \|\Gamma^k_s\|_{\mathcal{B}_K^\lambda} ds\,.
\end{equation}
Thus, an application of Gr\"onwall's Lemma ends the proof.
\end{proof}
Then, Theorem \ref{th:matching_tv_regu} is a direct corollary of this proposition and Proposition \ref{prop:L_2_comp_weak_topo}. 

\vspace{0.2cm}

\noindent\textbf{Representation theorem. }
The regularization with the TV norm promotes sparse solutions to the variational problem. In this case, a Representer Theorem similar to \ref{prop:reg_pcpd} can be established: certain solutions to the corresponding minimization problem can be expressed as linear combinations of extremal points of the unit ball (see \cite{duvalboyer2018representertheoremsconvexregularization,Bredies_2019}).
First, let us consider a soft interpolation problem:
\begin{proposition}\label{prop:repr_total_var}
    Let $\Omega$ be a bounded subset of $\R^d$ and $X = (x_1, \dots, x_n) \in \left( \Omega \right)^N$ be a tuple of $n$ distinct points in $\R^d$. Given values $Y = (y_1,\dots, y_n)$, consider the problem
    \begin{equation}
        \underset{(\Gamma, p) \in \mathcal{M}_{0} \times \mathcal{P}_{0} }{\inf} \sum_{i=1}^n |K\star \Gamma(x_i) + p(x_i) - y_i|^2 + |\gamma|_{TV}\,
    \end{equation}
    admits a sparse solution, i.e. $\Gamma$ can be written as
    \begin{equation}
        \Gamma = \sum_{j=1}^m \alpha_j \delta_{z_j}\,,
    \end{equation}
    where $m \leq Nd + d$, $z_j \in \Omega$ for all $j$ and $\sum_{j=1}^m \alpha_j = 0$.
\end{proposition}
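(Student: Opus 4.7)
The proof combines an existence step via the direct method of the calculus of variations with a representer theorem for convex regularization, following \cite{duvalboyer2018representertheoremsconvexregularization,Bredies_2019}. For existence, I would observe that the data-fidelity depends on $\Gamma$ only through the $nd$ scalar evaluations $K\star\Gamma(x_i) \in \R^d$. Since $K$ is continuous on the compact set $\Omega \times \Omega$, the map $\Gamma \mapsto K\star\Gamma(x_i)$ is weak-$*$ continuous on bounded subsets of $\mathcal{M}^d(\Omega)$, and the dependence on $p$ is continuous because $\mathcal{P}_0$ is finite-dimensional. The TV penalty is weak-$*$ lower semicontinuous and coercive, so a minimizing sequence has, by Banach-Alaoglu, a weak-$*$ accumulation point which is a minimizer $(\Gamma^\star, p^\star)$.

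\textbf{Sparsity via reduction to an equality-constrained problem.} The main step is to produce a sparse minimizer. Given a minimizer $(\Gamma^\star, p^\star)$, I would freeze $p = p^\star$ and consider the auxiliary problem
\begin{equation*}
    \min\left\{ |\Gamma|_{TV} \,:\, \Gamma \in \mathcal{M}^d(\Omega),\ K\star\Gamma(x_i) = K\star\Gamma^\star(x_i) \text{ for } i = 1,\dots,n,\ \textstyle\int d\Gamma = 0 \right\}.
\end{equation*}
Any solution $\tilde{\Gamma}$ yields the same data-fidelity value as $\Gamma^\star$ and satisfies $|\tilde{\Gamma}|_{TV} \leq |\Gamma^\star|_{TV}$, so $(\tilde{\Gamma}, p^\star)$ remains a minimizer of the original penalized problem. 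Crucially, this is a minimization of $|\cdot|_{TV}$ over $\mathcal{M}^d(\Omega)$ under exactly $nd + d$ scalar linear constraints (the $nd$ pointwise evaluations plus the $d$ scalar zero-mean equations $\int d\Gamma = 0$). In this enlarged ambient space, the extremal points of the unit ball $\{|\Gamma|_{TV} \leq 1\}$ are the single weighted Diracs $\vec{v}\delta_z$ with $z \in \Omega$ and $|\vec{v}| = 1$.

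\textbf{Representer theorem and counting.} The representer theorem of Bredies-Carioni \cite{Bredies_2019} (equivalently, Theorem 1 of \cite{duvalboyer2018representertheoremsconvexregularization}) then produces a solution of the equality-constrained problem expressible as a linear combination of at most as many extremal points as there are linear constraints, i.e., $\tilde{\Gamma} = \sum_{j=1}^m \alpha_j \delta_{z_j}$ with $\alpha_j \in \R^d$ and $m \leq nd + d$. The condition $\sum_j \alpha_j = 0$ is automatic because it is one of the imposed constraints. The main subtlety (and the place where I expect a careful argument) is the counting of atoms: a direct application of the representer theorem to the original variable set $\mathcal{M}_0^d$ would involve extremals of the TV ball on zero-mean measures, which are paired atoms of the form $\tfrac12(\vec{v}\delta_x - \vec{v}\delta_y)$ and lead to a looser bound. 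The trick is to relax to $\mathcal{M}^d(\Omega)$ and enforce zero mean as $d$ additional linear equations, which makes the extremals single Diracs and yields the stated bound $m \leq nd + d$.
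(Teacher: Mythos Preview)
Your proposal is correct and follows essentially the same route as the paper: the key move is to work in the full space $\mathcal{M}^d(\Omega)$ and treat the zero-mean condition $\int d\Gamma = 0$ as $d$ additional linear constraints, so that the extremal points of the TV ball are single Diracs and the Bredies--Carioni representer theorem yields the bound $m \leq nd + d$. The paper applies \cite[Theorem~3.3]{Bredies_2019} directly to the penalized problem, whereas you first establish existence and then reduce to an equality-constrained TV minimization with $p=p^\star$ frozen; this is a minor presentational variation and the counting argument is identical. One small aside: your parenthetical description of the extremals of the zero-mean TV ball as ``paired atoms $\tfrac12(\vec v\,\delta_x - \vec v\,\delta_y)$'' is only the $k=2$ case---for $d>1$ there are extremals supported on up to $d+1$ points (see the appendix lemma)---but since you do not use this characterization in the actual argument, it does not affect the proof.
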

\begin{proof}[Proof of Proposition \ref{prop:repr_total_var}]
Consider an extension, denoted $\tilde K$ of the map $K \mathcal{M}_{0,TV} \to \mathcal{C}^0(X)$ defined by $\mu \mapsto K\star \mu$ to the space of measures $\mathcal{P}_{TV}$. Then, we apply \cite[Theorem 3.3]{Bredies_2019}. The function $z,p \in \Omega^n \times \mathcal{P}_0 \mapsto  \sum_i |z_i + p(x_i) - y_i|^2$ is coercive, convex, and l.s.c for the usual topology in the Euclidean space. Moreover, the operator $\mathcal{A} : \gamma,p \in \mathcal{M} \times \left(\R^d\right)^n \times \mathcal{P}_0 \mapsto (\tilde K\star\Gamma(x_i) + p(x_i),1\leq i \leq n) \in \left(\R^d\right)^n $ is a linear, continuous and surjective. In addition, we add the constraint $\mu(\Omega) = 0 \in \R^d$. By \cite[4.1,3]{Bredies_2019}, there exists a solution which is a linear combination of $nd + d$ Dirac masses (the extremal points of the TV ball of $\mathcal{M}(\Omega,\R^d)$).
\end{proof}
For more details on sparse representation theorems, see \cite[4.1,3]{Bredies_2019} and \cite{duvalboyer2018representertheoremsconvexregularization}.
\begin{remark}[About the proof]
    To directly apply the result from \cite{Bredies_2019}, the extremal points of the ball of $\mathcal{M}_0(\Omega,\R^d)$ need to be characterized. These extremal points are sums of at most $d+1$ Dirac masses, as shown in the Appendix. However, this approach would give an $Nd(d+1)$ upper bound on the number of Dirac masses. Thus, a more effective approach is to incorporate the total volume constraint directly into the functional.
\end{remark}
\begin{remark}
    The above result does not provide information about the localization of the Dirac masses $\delta_{z_i}$. Recall that the RKHS representation result guarantees $N$ Dirac masses for an optimal solution, which stands in sharp contrast with the result we have $(N+1)d$. However, this bound is only an upper estimate, and it remains unclear whether it is tight.
\end{remark}

\subsubsection{$L^2$ regularization on the momentum.}

In this section, we combine the regularity properties of the previous model with the simpler representation of the model from Section \ref{sec:match_pb_without_tv}. Specifically, let $\mu \in \mathcal{P}(\Omega)$, where $\Omega$ is a bounded domain. If $\gamma \in L^2(\mu,\R^d)$, then the measure $\Gamma \coloneqq \gamma \mu$ is finite.
 \begin{lemma}
    Let $\mu \in \mathcal{P}(\Omega)$, where $\Omega$ is a bounded domain, and let $\gamma \in L^2(\mu,\R^d)$. Define $\Gamma \coloneqq \gamma \mu$. Then, the quantity $\langle \Gamma, K\star\Gamma\rangle$ is well-defined. Furthermore, the function $K \star \Gamma$ is also in $L^2(\mu,\R^d)$, with the inequalities:
    \begin{equation}
     \|K \star \Gamma\|_{L^2(\mu)} \leq \operatorname{diam}(\Omega)\|\gamma\|_{L^2(\mu)}\,.
    \end{equation}
    \begin{equation}
     |\Gamma|_{TV} \leq \|\gamma\|_{L^2(\mu)} \,.
    \end{equation}
\end{lemma}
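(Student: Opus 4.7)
The plan is to exploit two elementary facts: first, that the kernel is uniformly bounded on $\Omega \times \Omega$ by $\operatorname{diam}(\Omega)$, and second, that $\mu$ being a probability measure makes $\mathbf{1} \in L^2(\mu)$ with unit norm, so that $L^2(\mu) \hookrightarrow L^1(\mu)$ continuously with constant $1$. Both inequalities are then Cauchy--Schwarz estimates.

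I would start with the TV bound, which is the most immediate: since $\Gamma = \gamma\mu$, the total variation measure is $|\Gamma|_{TV} = \int_\Omega |\gamma(y)|\, d\mu(y)$, and Cauchy--Schwarz against the constant function $1$ combined with $\mu(\Omega)=1$ gives
\begin{equation}
    |\Gamma|_{TV} = \int_\Omega |\gamma(y)|\, d\mu(y) \leq \left(\int_\Omega |\gamma(y)|^2 d\mu(y)\right)^{1/2} \left(\int_\Omega d\mu\right)^{1/2} = \|\gamma\|_{L^2(\mu)}.
\end{equation}

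Next, for the $L^2$ bound on $K\star\Gamma$, I would write the convolution explicitly as the vector-valued integral $(K\star\Gamma)(x) = -\int_\Omega |x-y|\,\gamma(y)\,d\mu(y)$, which is well-defined pointwise thanks to the previous TV bound and the fact that $|x-y|$ is bounded on $\Omega$. Applying Cauchy--Schwarz in $L^2(\mu)$ pointwise in $x$ yields
\begin{equation}
    |(K\star\Gamma)(x)|^2 \leq \left(\int_\Omega |x-y|^2\, d\mu(y)\right) \left(\int_\Omega |\gamma(y)|^2 d\mu(y)\right) \leq \operatorname{diam}(\Omega)^2\, \|\gamma\|_{L^2(\mu)}^2,
\end{equation}
where I used $|x-y|\leq \operatorname{diam}(\Omega)$ and $\mu(\Omega)=1$. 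Integrating this pointwise bound against $\mu$ and taking square roots produces the stated estimate $\|K\star\Gamma\|_{L^2(\mu)} \leq \operatorname{diam}(\Omega)\,\|\gamma\|_{L^2(\mu)}$.

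Finally, for well-definedness of $\langle \Gamma, K\star\Gamma\rangle$, I would observe that by the disintegration $\Gamma=\gamma\mu$ the pairing reads $\langle \Gamma, K\star\Gamma\rangle = \int_\Omega (K\star\Gamma)(x)\cdot\gamma(x)\,d\mu(x)$, and Cauchy--Schwarz in $L^2(\mu)$ combined with the second inequality gives
\begin{equation}
    |\langle \Gamma, K\star\Gamma\rangle| \leq \|K\star\Gamma\|_{L^2(\mu)}\,\|\gamma\|_{L^2(\mu)} \leq \operatorname{diam}(\Omega)\, \|\gamma\|_{L^2(\mu)}^2 < +\infty,
\end{equation}
so the integral converges absolutely. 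There is no real obstacle here; the only mild point of care is to verify that the vector-valued convolution $K\star\Gamma$ is a well-defined measurable function, which follows from Fubini applied to the bounded kernel $|x-y|$ together with $\gamma \in L^1(\mu,\mathbb{R}^d)$.
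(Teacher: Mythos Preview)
Your proof is correct and follows essentially the same approach as the paper: both use that the Energy-Distance kernel is bounded by $\operatorname{diam}(\Omega)$ on $\Omega\times\Omega$ for the $L^2$ estimate and the well-definedness of the pairing, and Cauchy--Schwarz (against the constant $1$, with $\mu(\Omega)=1$) for the TV bound. Your write-up is simply more explicit than the paper's terse version.
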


 \begin{proof}
     Since $\Omega$ is a bounded domain, the Energy-Distance kernel is bounded by diam$(\Omega)$ on the support of $\mu$. This proves that $\|K \star \Gamma\|_{L^2(\mu)} \leq \operatorname{diam}(\Omega)\|\gamma\|_{L^2(\mu)}$ and $\langle \Gamma, K\star\Gamma\rangle \leq \operatorname{diam}(\Omega) \|\gamma\|_{L^2(\mu)}^2$.
     For the second part, we apply the Cauchy-Schwarz inequality:
     \begin{equation}
         |\Gamma|_{TV} = \int |\gamma|d\mu \leq \left( \int |\gamma|^2 d\mu\right)^{1/2} = \|\gamma\|_{L^2(\mu)} \,.
     \end{equation}
 \end{proof} 
Now, if $\mu_\cdot$ is an absolutely continuous measure curve \cite[Sec I.1]{ambrosio2005gradient} in $AC^2([0,1],\mathcal{P}(\Omega))$, we define the $\mu_{\cdot}$ dependant function space:
\begin{equation}
    L^2([0,1],L^2(\mu_t)) = \{ \gamma : t \mapsto \gamma_t \in L^2(\mu_t) \, |\, \int_0^1 \|\gamma_t\|_{L^2(\mu_t)}^2dt < + \infty \}\,.
\end{equation}
Define the vector-valued measure curve $\Gamma : t \mapsto \Gamma_t \coloneqq\gamma_t \mu_t $, and the associated velocity $v_t \coloneqq K \star \Gamma_t  \in L^2(\mu_t)$. Then, the system
\begin{equation}\label{eq:sys_transp_momentum}
    \begin{cases}
        \partial_t\mu_t + \nabla \cdot (v_t \mu_t) = 0 \,, \\
        v_t = K \star\Gamma_t \,,\\
        \Gamma_t = \gamma_t \mu_t \,, \\
        \gamma_t \in L^2(\mu_t) \,,
    \end{cases}
\end{equation}
where the first equation holds in the sense of distributions \cite[Chap. 8]{ambrosio2005gradient}, is well defined, and admits solutions $(\mu_t,\gamma_t) \in AC^2([0,1],\mathcal{P}(\Omega)) \times L^2([0,1],L^2(\mu_t))$. 
Now, we define the matching problem:
\begin{equation}\label{eq:prob_reg_L2_mom}
\underset{(\mu,\gamma) \text{ solves } \eqref{eq:sys_transp_momentum} }{\inf}
    E^\lambda(\mu,\gamma) = \mathcal{L}(\mu_1,\nu) + \int_0^1 \langle \Gamma_t, K \star \Gamma_t \rangle dt + \int_0^1 \|\gamma_t\|_{L^2(\mu_t)}^2dt \,,
\end{equation}
where $(\mu,\gamma) \in AC^2([0,1],\mathcal{P}(\Omega)) \times L^2([0,1],L^2(\mu_t))$ is a solution to the system \ref{eq:sys_transp_momentum}, $\mu_1$ is the measure $\mu_t$ at time $1$, and $K$ is the Energy-Distance kernel. Here, $\nu$ is a reference target measure, and $\mathcal{L}$ is any weakly-continuous loss between $\mu_1$ and $\nu$ (see Example \ref{ex:loss_examples}).
Let us start with the following lemma, which allows to control the growth of the support of $\mu_t$ over time:
\begin{lemma}[Support growth bound]\label{lemma:time_dep_bound}
    Let $(\mu,\gamma)$ be a solution to the system \eqref{eq:sys_transp_momentum}. Let us suppose that $\operatorname{supp}(\mu_0)$ is bounded, and that:
    \begin{equation}
        \int_0^1 \|\gamma_t\|^2_{L^2(\mu_t)}dt  \leq C\,,
    \end{equation}
    for some positive constant $C$. Then, for all $t \in [0,1]$:
    \begin{equation}
        \operatorname{diam}(\operatorname{supp } \mu_t) \leq \operatorname{diam}(\operatorname{supp } \mu_0) \exp{\left(\left(\int_0^t \|p\|^2_{L^2(\mu_t)}dt\right)^{1/2}\right)}\,.
    \end{equation}
\end{lemma}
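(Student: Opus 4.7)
The plan is to reduce the support-growth estimate to the bi-Lipschitz flow estimate from Proposition \ref{prop:tv_control_existence_homeo_bilip}, using the Lipschitz control of $v_t = K\star\Gamma_t$ and Cauchy--Schwarz in time.

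First, I would observe that for a solution $(\mu,\gamma)$ of \eqref{eq:sys_transp_momentum}, the velocity field $v_t = K\star\Gamma_t$ with $\Gamma_t = \gamma_t\mu_t$ is a global Lipschitz field. Indeed, the preceding lemma gives
\begin{equation*}
  |\Gamma_t|_{TV} \;\leq\; \|\gamma_t\|_{L^2(\mu_t)},
\end{equation*}
and the proof of Proposition \ref{prop:tv_control_existence_homeo_bilip} shows $\mathrm{Lip}(v_t)\leq |\Gamma_t|_{TV}$. In particular $\int_0^1 \mathrm{Lip}(v_t)\,dt < +\infty$ by Cauchy--Schwarz, so the Cauchy--Lipschitz framework applies and a unique bi-Lipschitz flow $\psi_t$ exists, with the Grönwall estimate
\begin{equation*}
  |\psi_t(x)-\psi_t(y)| \;\leq\; |x-y|\,\exp\!\left(\int_0^t |\Gamma_s|_{TV}\,ds\right).
\end{equation*}

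Next, since $v_t$ is Lipschitz, the continuity equation in \eqref{eq:sys_transp_momentum} has $\mu_t = (\psi_t)_{\#}\mu_0$ as its unique distributional solution (by the classical superposition/uniqueness results in \cite[Chap.~8]{ambrosio2005gradient}). Consequently $\operatorname{supp}(\mu_t) = \psi_t(\operatorname{supp}(\mu_0))$, and the Grönwall estimate above immediately yields
\begin{equation*}
  \operatorname{diam}(\operatorname{supp}\mu_t) \;\leq\; \operatorname{diam}(\operatorname{supp}\mu_0)\,\exp\!\left(\int_0^t |\Gamma_s|_{TV}\,ds\right).
\end{equation*}

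Finally, I would combine the pointwise bound $|\Gamma_s|_{TV}\leq \|\gamma_s\|_{L^2(\mu_s)}$ with the Cauchy--Schwarz inequality in time: for $t\in[0,1]$,
\begin{equation*}
  \int_0^t |\Gamma_s|_{TV}\,ds \;\leq\; \int_0^t \|\gamma_s\|_{L^2(\mu_s)}\,ds \;\leq\; \sqrt{t}\,\left(\int_0^t \|\gamma_s\|_{L^2(\mu_s)}^2\,ds\right)^{1/2} \;\leq\; \left(\int_0^t \|\gamma_s\|_{L^2(\mu_s)}^2\,ds\right)^{1/2},
\end{equation*}
which plugged into the previous inequality gives exactly the stated bound (with the understanding that the $\|p\|$ appearing in the display is a typo for $\|\gamma_s\|$). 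No step is really an obstacle; the only mild subtlety is justifying that $\mu_t$ is the pushforward under $\psi_t$, but this is standard once Lipschitzness of $v_t$ has been obtained from the preceding lemma.
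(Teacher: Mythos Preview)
Your proof is correct. The route differs from the paper's, which is worth noting briefly.

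The paper argues directly on the diameter: for $x\in\operatorname{supp}(\mu_t)$ one has $|v_t(x)|\leq\operatorname{diam}(\operatorname{supp}\mu_t)\,\|\gamma_t\|_{L^2(\mu_t)}$ (since $|K(x,y)|=|x-y|\leq\operatorname{diam}(\operatorname{supp}\mu_t)$ on the support and then Cauchy--Schwarz in $y$), so $\frac{d}{dt}\operatorname{diam}(\operatorname{supp}\mu_t)\leq\operatorname{diam}(\operatorname{supp}\mu_t)\,\|\gamma_t\|_{L^2(\mu_t)}$, and Gr\"onwall plus Cauchy--Schwarz in time finish. This is shorter and avoids any flow or uniqueness machinery, but the step ``differentiate the diameter'' is somewhat informal.

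Your approach instead invokes the Lipschitz bound $\mathrm{Lip}(v_t)\leq|\Gamma_t|_{TV}\leq\|\gamma_t\|_{L^2(\mu_t)}$, pulls in the bi-Lipschitz flow estimate \eqref{eq:estim_homeo_lip_tv} from Proposition~\ref{prop:tv_control_existence_homeo_bilip}, and then identifies $\mu_t=(\psi_t)_{\#}\mu_0$ via uniqueness for the continuity equation. This is heavier but fully rigorous, and it makes explicit the pushforward structure that the paper only states later (Proposition~\ref{prop:L2_mom_reparam} and the proposition following Theorem~\ref{th:exist_minim_parti_L2_regu}). One minor remark: Proposition~\ref{prop:tv_control_existence_homeo_bilip} is stated for $\Gamma_t\in\mathcal{M}_{\mathcal{P}_{m-1}}$, but as you implicitly use, the Lipschitz bound in its proof does not rely on the zero-moment constraint, so the invocation is legitimate here.
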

\begin{proof}
    For almost every $t \in (0,1)$, for $\mu_t$-almost any $x$, we have:
    \begin{equation}
        |v_t (x)| \leq \operatorname{diam}(\operatorname{supp } \mu_t) \|p\|_{L^2(\mu_t)}\,,
    \end{equation}
    so that
    \begin{equation}
        \frac{d}{dt} \operatorname{diam}(\operatorname{supp } \mu_t) \leq \operatorname{diam}(\operatorname{supp }\mu_t)\|p\|_{L^2(\mu_t)}\,. 
    \end{equation}
    Using Grönwall's Lemma and the Cauchy-Schwarz inequality, we obtain 
    \begin{align}
        \operatorname{diam}(\operatorname{supp } \mu_t) &\leq \operatorname{diam}(\operatorname{supp } \mu_0) \exp{\left(\int_0^t \|p\|_{L^2(\mu_t)}dt\right)} \\
        &\leq \operatorname{diam}(\operatorname{supp } \mu_0) \exp{\left(\left(\int_0^t \|p\|^2_{L^2(\mu_t)}dt\right)^{1/2}\right)}\,.
    \end{align}
\end{proof}
The following result completely characterizes solutions to the system \eqref{eq:sys_transp_momentum} in the case of atomic initial measures with finite $E^\lambda$ energy, and provides a straightforward parametrization of the flow.
\begin{proposition}[$L^2$ parametrization]\label{prop:L2_mom_reparam}
    Let $X = (x_1, \dots, x_N) \in \left(\R^d\right)^n$, $p \in L^2\left([0,1],\left( \R^d \right)^n\right) $. Then, there exists an absolutely continuous measure curve $\mu$ written as 
    \begin{equation}\label{eq:def_mu_t}
            \mu_t = \sum_{i=1}^n \frac{1}{n}\delta_{x_i(t)}\,,
    \end{equation}
    such that, defining $\gamma_t \in L^2(\mu_t) : x_i(t) \mapsto p_t^i$, $(\mu,\gamma)$ is a solution to the system \eqref{eq:sys_transp_momentum} and such that the $x_i(t)$ are distinct at all times.
    Furthermore, provided initial distinct points $X = (x_1, \dots, x_n) \in \left(\R^d\right)^n$, the application
    \begin{equation}
        p \in L^2\left([0,1],\left( \R^d \right)^n\right) \mapsto \mu_t \in \mathcal{P}(\R^d)
    \end{equation} 
    is weakly continuous for all $t \in [0,1]$, and the support of $\mu_t$ is uniformly bounded in time.
    Conversely, if $\mu_t$ is an absolutely continuous curve written as in \eqref{eq:def_mu_t}, where the points are distinct at all times, with support contained in a uniform compact set $\Omega$, then there exists $p \in L^2\left([0,1],\left( \R^d \right)^n\right)$ such that $\mu_t = \mu_t(p)$ as constructed in the previous statement.
\end{proposition}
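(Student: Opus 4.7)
The plan is to encode the measure entirely through its ordered atoms, reducing the problem to a finite-dimensional ODE. Writing $\Gamma_t = \frac{1}{n}\sum_i p_t^i\delta_{x_i(t)}$ gives $v_t(x) = -\frac{1}{n}\sum_i p_t^i\,|x-x_i(t)|$, so the continuity equation, when tested on the support, reduces to
\begin{equation*}
    \dot{x}_i(t) \,=\, -\frac{1}{n}\sum_{j=1}^n p_t^j\,|x_i(t) - x_j(t)|.
\end{equation*}
The right-hand side is measurable in $t$ and globally Lipschitz in the position vector with constant $\leq \tfrac{1}{\sqrt n}|p_t|$, which lies in $L^2([0,1])\subset L^1([0,1])$; Carath\'eodory-Picard then yields a unique absolutely continuous trajectory on $[0,1]$. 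Setting $\mu_t = \tfrac{1}{n}\sum_i\delta_{x_i(t)}$ and $\gamma_t(x_i(t)) = p_t^i$, the bound $W_2(\mu_s,\mu_t)\leq \bigl(\tfrac{1}{n}\sum_i|x_i(t)-x_i(s)|^2\bigr)^{1/2}$ via the identity coupling gives $\mu_\cdot \in AC^2$, while $\|\gamma_t\|_{L^2(\mu_t)}^2 = \tfrac{1}{n}|p_t|^2 \in L^1_t$; the remaining equations of \eqref{eq:sys_transp_momentum} hold by construction. Distinctness is preserved via a Gr\"onwall argument on $|x_i(t)-x_j(t)|^2$, yielding $|x_i(t)-x_j(t)|\geq |x_i-x_j|\exp\bigl(-\tfrac{1}{\sqrt n}\int_0^t|p_s|\,ds\bigr) > 0$.

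\textbf{Continuity and support bound.} For $p^k \to p$ in $L^2([0,1],(\R^d)^n)$, a Gr\"onwall estimate applied to $|x^k(t) - x(t)|$, exploiting the bilinearity of the velocity in $(x,p)$ together with a uniform bound on $\|p^k\|_{L^2}$, yields uniform convergence $x^k \to x$ on $[0,1]$, and hence $\mu_t^k \rightharpoonup \mu_t$ weakly-$*$ for each $t$. The uniform support bound follows directly from Lemma \ref{lemma:time_dep_bound}, complemented by a Cauchy--Schwarz estimate in time for the motion of the barycenter $\tfrac{1}{n}\sum_i x_i(t)$.

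\textbf{Converse.} Given $\mu_t$ absolutely continuous and of the prescribed form with atoms that remain distinct, the lift of $t\mapsto\mu_t$ from the symmetric configuration space to a labeled path $(x_i(t))_i$ is continuous because the pairwise diagonal is closed, and each $x_i\in AC^2([0,1],\R^d)$ is precisely the metric-derivative condition for such discrete measures. The identity $v_t(x_i(t))=\dot x_i(t)$ with $v_t=K\star\Gamma_t$ then becomes the linear system $-\tfrac{1}{n}D_t\,p_t=\dot x(t)$, where $D_t=[|x_i(t)-x_j(t)|]$ is the Euclidean distance matrix. This matrix is invertible for any $n$ pairwise distinct points: by strict conditional positive definiteness of $-K$, $D_t$ is strictly negative definite on the $(n-1)$-dimensional zero-sum subspace, and its remaining eigenvalue is nonzero since $\mathrm{tr}(D_t)=0$. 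Setting $p_t:=-n\,D_t^{-1}\,\dot x(t)$ solves the system. The main obstacle is the $L^2$-integrability of $p$: since the labeled trajectories are continuous on the compact interval $[0,1]$ and pointwise distinct, the separation $\delta:=\inf_{t\in[0,1],\,i\neq j}|x_i(t)-x_j(t)|$ is attained and strictly positive; combined with the uniform boundedness of $\mathrm{supp}\,\mu_t$, this yields a uniform operator bound on $D_t^{-1}$, and $p\in L^2$ follows from $\dot x\in L^2$.
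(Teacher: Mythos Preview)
Your forward direction and converse are essentially correct, but there is a genuine gap in the continuity claim. The proposition asserts that $p \mapsto \mu_t$ is \emph{weakly} continuous, i.e.\ $p^k \rightharpoonup p$ weakly in $L^2([0,1],(\R^d)^n)$ implies $\mu_t^k \to \mu_t$; this is exactly what is invoked downstream to run the direct method in Theorem~\ref{th:exist_minim_parti_L2_regu}. Your Gr\"onwall estimate, however, only handles \emph{strong} convergence: after splitting $|F(X^k,p^k_s)-F(X,p_s)|$ into a Lipschitz-in-$X$ part and a linear-in-$p$ part, the latter contributes a term controlled by $\int_0^t |p^k_s - p_s|\,ds$, which does not vanish under weak convergence. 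The fix is to replace Gr\"onwall by compactness: a weakly convergent (hence bounded) sequence $p^k$ produces uniformly H\"older trajectories $X^k$, Arzel\`a--Ascoli extracts a uniformly convergent subsequence, and one identifies the limit by passing to the weak limit in the integral equation $x_i(t) = x_i + \int_0^t \sum_j p^k_{j,s}\,K(x_i^k(s)-x_j^k(s))\,ds$ (the integrand is uniformly bounded, converges in the $X$-variables, and is linear in $p$); uniqueness of the ODE then forces the full sequence to converge. This is essentially the argument the paper sketches.

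Your converse takes a genuinely different route from the paper. The paper decomposes the velocity via Proposition~\ref{prop:induced_norm} into a zero-sum moment plus a translation $\alpha_t$, and then bounds the moment through the eigenvalue estimate of Proposition~\ref{prop:tv_maj_if_no_collpase}. You instead invert the full distance matrix $D_t$ directly: the CPD property makes $D_t$ negative definite on the zero-sum hyperplane, so by Courant--Fischer $D_t$ has at least $n-1$ negative eigenvalues, and $\mathrm{tr}\,D_t=0$ forces the remaining eigenvalue to be strictly positive; a uniform bound on $\|D_t^{-1}\|$ then follows from the strictly positive minimum separation on the compact interval. This is more self-contained and produces a $p$ that matches the forward construction without introducing the translation term $\alpha_t$ (which the paper's proof leaves somewhat implicit). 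The paper's route, on the other hand, reuses the quantitative estimate \eqref{eq:control_L2_norm_rkhs} and makes the dependence on the separation parameter explicit.
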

\begin{proof}
    Let $\mu$ be a probability measure with finite support $(x_1, \dots, x_n)$ and $p : x_i \mapsto p_i \in L^2(\mu)$. Define the velocity fields $v = K \star (p\mu) $. Then, $v$ is Lipschitz, with a Lipschitz constant bounded by $\|p \|_{L^2(\mu)}$, which is independent from the support of $\mu$. This ensures, by the Cauchy-Lipschitz Theorem, that the Lagrangian equations
    \begin{equation}\label{eq:lag_eq_particl}
        \begin{cases}
            \dot{x_i}(t) = v_t(x_i(t)) \,, \\[4pt]
            x_i(0) = x_i \,,
        \end{cases}
        \quad \text{for } 1 \leq i \leq n \,,
    \end{equation}
    have a unique solution $(t \in [0,1] \mapsto x_i(t), 1 \leq i \leq n)$. Denote 
    \begin{equation}
        \mu_t \coloneqq \sum_{i=1}^n \frac{1}{n}\delta_{x_i(t)} \,.
    \end{equation}
    From Lemma \ref{lemma:time_dep_bound}, the support of $\mu_t$ is uniformly bounded in time.
    Using Proposition \ref{prop:tv_control_existence_homeo_bilip}, there exists a bi-Lipschitz homeomorphism curve $t \in [0,1] \mapsto \psi_t$ such that $\psi_t$ is the unique solution of the flow equation
    \begin{equation}
        \begin{cases}
            \partial_t \psi_t = v_t \circ \psi_t \, \\
            \psi_0 = \Id \,.
        \end{cases}
    \end{equation}
    Applying estimate \eqref{eq:estim_homeo_lip_tv} applied to $\psi_t^{-1}(x)$ and $\psi_t^{-1}(y)$ provides, for all $x \in \R^d$:
    \begin{equation}
        |\psi_t(x)-\psi_t(y)| \geq |x-y|\exp{\left( -\int_0^1 \|p\|_{L^2(\mu_t)} dt \right)} \geq |x-y|\exp{\left( -\int_0^1 \|p\|^2_{L^2(\mu_t)} dt \right)}\,.
    \end{equation}
    Furthermore, since the homeomorphism $\psi_t$ satisfies, at all times:
    \begin{equation}
        x_i(t) = \psi_t(x_i) \,,
    \end{equation}
    the points $(x_i(t), 1 \leq i \leq n)$ are distinct at all times, with a uniform bound from below of the minimal distance between points.
    For the last statement, remark that the ODE system \eqref{eq:lag_eq_particl} implies that, for all $1 \leq i \leq n$:
    \begin{equation}
        x_i(t) = x_i + \int_0^1 \sum_{j=1}^n p_{i,s}K(x_i(s)-x_j(s))\, ds\,,
    \end{equation}
    which is a weakly continuous application in $p$ for the $L^2([0,1])$ topology.
    
    For the converse part of the proposition, let $\mu_t \in AC^2([0,1],\mathcal{P}(\Omega))$. Then, from \cite[Theorem 8.3.1]{ambrosio2005gradient}, there exists a Borel velocity field $v_t \in L^2(\mu_t,\R^d)$ such that the continuity equation 
    \begin{equation}
        \partial_t \mu_t + \nabla \cdot (v_t \mu_t) = 0
    \end{equation}
    holds in the sense of distribution. This amounts to the existence, for almost all times $t \in [0,1]$, of a velocity $v_{t,i} \in \R^d$ such that 
    \begin{equation}
        \dot{x_i}(t) = v_{t,i} \,.
    \end{equation}
    Let us denote $X(t) = (x_1(t),\dots,x_n(t))$ and $K_{X(t)}$ the matrix with general entries $-|x_i(t) - x_j(t)|$.
    By Proposition \ref{prop:induced_norm}, for almost all $t$, there exists a unique couple $(p_t,\alpha_t) \in \left(\R^d \right)^n \times \R^d$ such that $\sum_i p_{t,i} = 0_d$ and such that, for all $1 \leq i \leq n$:
    \begin{equation}
        v_{t,i} = (K_{X(t)}p_t)_i + \alpha_t \,.
    \end{equation}
    Define 
    \begin{equation}
        \Gamma_t(p) = \sum_{i=1}^n 
        p_{t,i}\delta_{x_i(t)} \,.
    \end{equation}
    Since the trajectories $x_i(t)$ are distinct at all times, there exists $r,\sigma > 0$ such that $\Gamma_t \in \mathcal{M}_{\mathcal{P}_{m-1},n,r}(B_{\sigma r})$ at all times. This proves, using estimate \eqref{eq:control_L2_norm_rkhs} from Proposition \ref{prop:tv_maj_if_no_collpase} with the same $\lambda_\sigma >0$, that:
    \begin{equation}
        |p_t|_2^2 \leq \frac{\lambda_\sigma^2}{r} |K_{X(t)} p_t|_2^2 \,.
    \end{equation}
    As $\int_0^1 |K_{X(t)} p_t|^2dt = \int_0^1 |v_t|^2d\mu_t < +\infty$, this proves that $t \mapsto p_t$ is in $L^2\left([0,1],\left( \R^d \right)^n\right)$, concluding the proof.
\end{proof}
This property allows for a reparametrization of the matching problem \eqref{eq:prob_reg_L2_mom} by the set $L^2\left([0,1],\left( \R^d \right)^n\right)$ in the case of $n$ landmarks. We consider the following problem:
\begin{equation}\label{eq:prob_reg_L2_mom_extr}
\underset{ p \in L^2\left([0,1],\left( \R^d \right)^n\right)}{\inf}
    \Tilde{E}^\lambda(p) = \mathcal{L}(\mu_1,\nu) + \int_0^1 \langle \Gamma_t, K \star \Gamma_t \rangle dt + \int_0^1 |p_t|^2dt \,,
\end{equation}
where
\begin{equation}
    \mu_0 = \sum_{i=1}^n \frac{1}{n} \delta_{x_i}\,,
\end{equation}
the $x_i$ are all distinct, $\mu_t$ is the (unique) absolutely continuous curve provided by Proposition \ref{prop:L2_mom_reparam},
\begin{equation}
    \Gamma_t = \sum_{i=1}^n p_{t,i} \delta_{x_i(t)} \,,
\end{equation}
and $\mathcal{L}$ is a general loss.
We are able to prove the following result:
\begin{theorem}\label{th:exist_minim_parti_L2_regu}
    Suppose that $\mathcal{L}$ is a lower semicontinuous loss. Then, there exist solutions to the variational problem \eqref{eq:prob_reg_L2_mom_extr}.
\end{theorem}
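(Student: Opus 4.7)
The plan is to apply the direct method of the calculus of variations. Let $(p^k)_k \subset L^2([0,1], (\R^d)^n)$ be a minimizing sequence for $\tilde{E}^\lambda$. The finiteness $\tilde{E}^\lambda(0) = \mathcal{L}(\mu_0, \nu) < \infty$ together with the lsc lower bound on $\mathcal{L}$ and the support-growth estimate of Lemma \ref{lemma:time_dep_bound} yields an \emph{a priori} bound on $\int_0^1 |p^k_t|^2\, dt$ and confines the supports of the curves $\mu^k_t$ within a common compact $\Omega \subset \R^d$, uniformly in $k$ and $t$. By reflexivity of $L^2([0,1], (\R^d)^n)$, extract a subsequence $p^k \rightharpoonup p^\infty$ weakly.

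Next, I would upgrade to uniform convergence of the particle trajectories. The velocity fields $v^k_t = K \star \Gamma^k_t$ are Lipschitz on $\Omega$ with Lipschitz constants controlled by $\|p^k_t\|_2$, so Cauchy--Schwarz shows that $t \mapsto X^k(t)$ is equi-$1/2$-Hölder. Arzelà--Ascoli combined with the weak continuity $p \mapsto x_i(t)$ from Proposition \ref{prop:L2_mom_reparam} yields, after a further extraction, $X^k \to X^\infty$ uniformly on $[0,1]$ where $X^\infty$ is the trajectory of $p^\infty$; in particular $\mu^k_1 \rightharpoonup \mu^\infty_1$ weakly.

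It remains to verify lower semicontinuity of each term of $\tilde{E}^\lambda$. The discrepancy gives $\mathcal{L}(\mu^\infty_1, \nu) \leq \liminf_k \mathcal{L}(\mu^k_1, \nu)$ by the lsc hypothesis, and $\int_0^1 |p_t|^2\, dt$ is weakly lsc by reflexivity. For the kernel term, denote by $K^k(t)$, $K^\infty(t)$ the $n \times n$ Gram matrices built from $X^k(t)$ and $X^\infty(t)$. Uniform trajectory convergence gives $\sup_t \|K^k(t) - K^\infty(t)\|_{op} \to 0$, so with the uniform $L^2$-bound on $p^k$,
\[ \int_0^1 \langle p^k_t, K^k(t) p^k_t \rangle\, dt = \int_0^1 \langle p^k_t, K^\infty(t) p^k_t \rangle\, dt + o(1). \]
The remaining functional is a fixed time-dependent quadratic form in $p$, whose weak lsc is obtained by decomposing $p$ along $\R^n = (\R \mathbf{1})^\perp \oplus \R \mathbf{1}$: on the zero-mean subspace the ED kernel is positive semi-definite by the CPD property, so convexity and hence weak lsc follow, while the one-dimensional mean direction is handled by a finite-dimensional compactness argument using the compact support of $\mu^k_t$ and the coercivity of $\int_0^1 |p_t|^2\, dt$ along this direction. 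Combining all three parts, $\tilde{E}^\lambda(p^\infty) \leq \liminf_k \tilde{E}^\lambda(p^k) = \inf \tilde{E}^\lambda$, so $p^\infty$ is a minimizer.

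The main obstacle lies in this last step: since the Energy-Distance kernel is only \emph{conditionally} positive definite, $p \mapsto \int_0^1 \langle \Gamma_t, K\star \Gamma_t\rangle\, dt$ is not outright convex on $L^2([0,1], (\R^d)^n)$, so it is not obviously weakly lsc. The remedy is the two-stage decoupling above---first freezing the trajectory-dependent kernel matrix via uniform trajectory convergence, then exploiting the CPD property to recover weak lsc on the zero-mean subspace while treating the mean direction via a finite-dimensional argument.
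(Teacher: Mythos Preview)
Your overall strategy---the direct method, with weak compactness of $L^2$-sublevels and weak lower semicontinuity of $\tilde E^\lambda$---matches the paper's one-line proof, which simply asserts weak lsc via Proposition~\ref{prop:L2_mom_reparam} and the hypothesis on $\mathcal L$. Your route is considerably more explicit: the equi-H\"older estimate, Arzel\`a--Ascoli, and the freezing of the Gram matrix through uniform trajectory convergence are all correct and fill in details the paper omits. You also correctly flag the genuine obstacle, namely that the Energy-Distance Gram matrix $K^\infty(t)$ is only conditionally positive, so the frozen quadratic form is not convex on all of $(\R^d)^n$.

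The gap is in your proposed remedy for the mean direction. The component $\bar p_t=\frac{1}{n}\bigl(\sum_i p_{t,i}\bigr)\mathbf 1$ is a \emph{function of time} and therefore lives in $L^2([0,1],\R^d)$; this space is infinite-dimensional, so no ``finite-dimensional compactness argument'' applies. Weak convergence $p^k\rightharpoonup p^\infty$ only yields $\bar p^k\rightharpoonup\bar p^\infty$ weakly, and on the span of $\mathbf 1$ the frozen form $\bar p_t\mapsto\langle\bar p_t,K^\infty(t)\bar p_t\rangle=-|c_t|^2\sum_{i\neq j}|x_i^\infty(t)-x_j^\infty(t)|$ is \emph{concave}, hence weakly upper- rather than lower-semicontinuous; the cross term $\int_0^1\langle q_t^k,K^\infty(t)\bar p_t^k\rangle\,dt$ also has no limit in general. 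Concretely, take $n=2$, $d=1$, $x_1=0$, $x_2=D$ and $p_t^k=(M\sin kt,\,M\sin kt)$: then $p^k\rightharpoonup 0$, the trajectories converge uniformly to the initial configuration, yet the kernel-plus-$L^2$ part of $\tilde E^\lambda(p^k)$ converges to $(1-D)M^2$, which for $D>1$ lies strictly below the value at $p^\infty=0$ and is unbounded below in $M$. A correct argument therefore needs an extra ingredient not visible in the problem as written---either the zero-mean constraint $\sum_i p_{t,i}=0$ (which Algorithm~\ref{algo:euler_trajectories} and the converse part of Proposition~\ref{prop:L2_mom_reparam} do impose), or a coefficient $\lambda$ on the $L^2$ term large enough that $K^\infty(t)+\lambda I$ is uniformly positive semidefinite on the compact set produced by Lemma~\ref{lemma:time_dep_bound}.
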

\begin{proof}
    The level sets $\left\{ p  \in L^2\left([0,1],\left( \R^d \right)^n\right) \,|\, \int_0^1 |p_t|^2 dt \leq C \right\}$ are weakly compact, and $p \mapsto \Tilde{E}^\lambda(p)$ is weakly l.s.c. by Proposition \ref{prop:L2_mom_reparam} and the hypothesis on $\mathcal{L}$. This proves the result.
\end{proof}
We can prove the existence of minimizers to Problem \eqref{eq:prob_reg_L2_mom} for very general initial measures:
\begin{theorem}
     Let $\mathcal{L}$ be a lower semicontninuous loss. Then, there exists a solution to the variational problem \eqref{eq:prob_reg_L2_mom}.
\end{theorem}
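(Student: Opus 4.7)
The plan is to apply the direct method of the calculus of variations, but to pass to the limit in the vector-valued momentum measures $\Gamma^k_t = \gamma^k_t \mu^k_t$ rather than in the momentum fields $\gamma^k_t$ themselves. This decouples the analysis from the $k$-dependent function spaces $L^2(\mu^k_t)$ and mirrors the strategy of Theorem~\ref{th:matching_tv_regu}, while exploiting the $L^2(\mu_t)$-density structure to obtain the sharper bi-Lipschitz regularity.

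Take a minimizing sequence $(\mu^k,\gamma^k)$. The Cauchy-Schwarz bound $|\Gamma^k_t|_{TV}\leq \|\gamma^k_t\|_{L^2(\mu^k_t)}$ shows that $\Gamma^k$ is uniformly bounded in $L^2([0,1],(\mathcal{M}^d(\Omega),|\cdot|_{TV}))$. Lemma~\ref{lemma:time_dep_bound} then provides a common compact $\tilde\Omega$ containing $\operatorname{supp}(\mu^k_t)$ for all $k$ and $t$, and the estimate $\|v^k_t\|_{L^2(\mu^k_t)}\leq \operatorname{diam}(\tilde\Omega)\|\gamma^k_t\|_{L^2(\mu^k_t)}$ yields uniform boundedness of $\mu^k$ in $AC^2([0,1],(\mathcal{P}(\tilde\Omega),W_2))$. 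A Wasserstein Arzelà-Ascoli argument (see \cite[Prop.~3.3.1]{ambrosio2005gradient}) extracts a subsequence with $\mu^k_t\to\mu^\infty_t$ narrowly, uniformly in $t$, and Proposition~\ref{prop:L_2_comp_weak_topo} simultaneously provides a weak-$*$ limit $\Gamma^\infty\in L^2([0,1],(\mathcal{M}^d(\tilde\Omega),|\cdot|_{TV}))$.

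The main obstacle will be to verify that for almost every $t$ the limit $\Gamma^\infty_t$ is absolutely continuous with respect to $\mu^\infty_t$, with a density $\gamma^\infty_t\in L^2(\mu^\infty_t)$ satisfying the joint lower semicontinuity
\begin{equation*}
\int_0^1 \|\gamma^\infty_t\|_{L^2(\mu^\infty_t)}^2\,dt \;\leq\; \liminf_{k\to\infty}\int_0^1 \|\gamma^k_t\|_{L^2(\mu^k_t)}^2\,dt.
\end{equation*}
I would invoke the classical joint lower semicontinuity of the Benamou-Brenier type functional $(\mu,\Gamma)\mapsto \int |d\Gamma/d\mu|^2\,d\mu$ under joint narrow convergence of pairs (see \cite[Thm.~5.4.4 and Lem.~5.4.2]{ambrosio2005gradient}), then integrate in time via Fatou's lemma. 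This is the delicate step because the Radon-Nikodym density and the reference measure both depend on $k$; the duality/convexity representation of the Benamou-Brenier action is what makes the passage to the limit robust.

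The remaining ingredients are standard. To pass to the limit in the continuity equation, the uniform TV bound on $\Gamma^k_t$ combined with continuity of $K$ on $\tilde\Omega\times\tilde\Omega$ guarantees that the flux $v^k_t\mu^k_t=(K\star\Gamma^k_t)\mu^k_t$ converges in the distributional sense on $[0,1]\times\tilde\Omega$ to $(K\star\Gamma^\infty_t)\mu^\infty_t$, so that the system \eqref{eq:sys_transp_momentum} is preserved in the limit with $\gamma^\infty_t=d\Gamma^\infty_t/d\mu^\infty_t$. Lower semicontinuity of $\mathcal{L}(\cdot,\nu)$ holds by assumption, and lower semicontinuity of $\int_0^1\langle\Gamma_t,K\star\Gamma_t\rangle\,dt$ follows from the weak-$*$ continuity of $\Gamma\mapsto\langle\Gamma,K\star\Gamma\rangle$ established in the preceding section, combined once more with Fatou. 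Summing the three lower bounds shows $E^\lambda(\mu^\infty,\gamma^\infty)\leq \liminf_k E^\lambda(\mu^k,\gamma^k)$, so $(\mu^\infty,\gamma^\infty)$ is a minimizer.
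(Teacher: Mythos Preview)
Your proof is correct and follows essentially the same strategy as the paper's: lift to the momentum measures $\Gamma_t=\gamma_t\mu_t$, extract compactness of $(\mu^k,\Gamma^k)$ via the support-growth bound and the TV control $|\Gamma_t|_{TV}\le\|\gamma_t\|_{L^2(\mu_t)}$, invoke the Benamou--Brenier joint lower semicontinuity for the $L^2(\mu_t)$-density term, pass to the limit in the continuity equation, and recover $\gamma^\infty_t=d\Gamma^\infty_t/d\mu^\infty_t$. The only cosmetic difference is that the paper bundles the two regularization terms into a single convex functional to argue weak lower semicontinuity, whereas you split them; your ``pointwise weak-$*$ continuity plus Fatou'' justification for the kernel term is slightly informal (weak-$*$ convergence in $L^2([0,1],\mathcal{M})$ does not directly yield a.e.-in-$t$ convergence), but this is at the same level of detail as the paper's own argument.
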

\begin{proof}
    First, note that if $\gamma \in L^2([0,1],L^2(\mu_t))$, then the moment measure $\Gamma_t \coloneqq \gamma_t \mu_t$ is well defined for almost all times, and satisfies:
    \begin{equation}
        E^\lambda(\mu,\gamma) = \Tilde{E}^\lambda (\mu,\Gamma) \coloneqq \mathcal{L}(\mu_1,\nu) + \int_0^1 \langle \Gamma_t, K \star \Gamma_t \rangle dt + \int_0^1 \int \left|\frac{d \Gamma_t}{d\mu_t}\right|^2d\mu_t dt\,.
    \end{equation}
    Let $(\mu^k,\gamma^k)$ be a minimizing sequence to problem \eqref{eq:prob_reg_L2_mom}. Note that there exists a uniform constant $C >0$ such that, for all $k \geq 0$:
    \begin{equation}
        \int_0^1 |\gamma_t^k|^2_{L^2(\mu_t^k)}dt \leq C \,.
    \end{equation}
    By Lemma \ref{lemma:time_dep_bound}, the support of $\mu_t^k$ is uniformly bounded in time and in $k$. This proves that $\mu^k \in AC^2([0,1],\mathcal{P}(\Omega))$ for some compact set $\Omega$, so that there exists an accumulation point $\mu \in AC^2([0,1],\mathcal{P}(\Omega))$. Now, let us define, for any $k \geq 0$, the momentum measure:
    \begin{equation}
        \Gamma_t^k \coloneqq \gamma_t^k \mu_t^k \,.
    \end{equation}
    Since, for any $k$:
    \begin{equation}
        \int_0^1 |\Gamma_t^k|_{TV}dt \leq C \,,
    \end{equation}
    there exists a subsequence such that $\Gamma^k$ weakly converges to an accumulation point $\Gamma \in \mathcal{M}([0,1];\R^d)$, in the sense that for any test function $f \in \mathcal{C}^0([0,1]\times\Omega)$, 
    \begin{equation}
        \int_0^1 \int_\Omega f_t d\Gamma_t^k \underset{k \rightarrow + \infty}{\rightarrow}   \int_0^1 \int_\Omega f_t d\Gamma_t \,.
    \end{equation}
    Now, define the following velocity field:
    \begin{equation}
        v_t^k \coloneqq K \star \Gamma_t^k \,,
    \end{equation}
    which is defined for almost any $t$ on $\Omega$.
    The Energy-Distance kernel $K$ is continuous, so that if $g  \in L^2([0,1],L^2(\mu_t))$ is a test function:
    \begin{align}
        \int_0^1 \int g_t v_t^k d\mu_tdt &= \int_0^1 \int g_t(x)\left(\int K(x-y)d\Gamma_t^k(y)\right)d\mu_t(x)dt\\
        &\underset{k \rightarrow \infty}{\rightarrow} \int_0^1 \int g_t(x)\left(\int K(x-y)d\Gamma_t(y)\right)d\mu_t(x)dt \\ 
        &= \int_0^1 \int g_t v_t d\mu_tdt \,.
    \end{align}
    This proves that, for almost all times $t$, the sequence $v_t^k$ converges to $v_t \coloneqq K \star \Gamma_t$ in $L^2(\mu_t)$. Moreover, the continuity equation is weakly verified at the limit. Indeed, for any $\varphi \in \mathcal{C}^\infty([0,1]\times \Omega)$:
    \begin{equation}
        0 = \int_0^1 \left( \partial_t \varphi + v_t^k\cdot\nabla \varphi_t \right)d\mu_t^ndt \underset{k}{\rightarrow }\int_0^1 \left( \partial_t \varphi + v_t^k\cdot\nabla \varphi_t \right)d\mu_t^ndt\,.
    \end{equation}
    Now, since the Lagrangian
    \begin{equation}
        (\mu,\Gamma) \mapsto \int_0^1 \langle \Gamma_t, K \star \Gamma_t \rangle dt + \int_0^1 \int \left|  \frac{d\Gamma_t}{d\mu_t}\right|^2d\mu_t
    \end{equation}
    is convex, it is lower semi-continuous, so that by hypothesis on $\mathcal{L}$ the functional $\Tilde{E}^\lambda$ is lower semi-continuous. This proves:
    \begin{equation}
        \Tilde{E}^\lambda(\mu,\Gamma) \leq \underset{(\mu,\gamma) \text{ solves } \eqref{eq:sys_transp_momentum} }{\inf}
    E^\lambda(\mu,\gamma)\,.
    \end{equation}
    Finally, let us define 
    \begin{equation}
        \gamma_t = \frac{d \Gamma_t}{d\mu_t}\,,
    \end{equation}
    the Radon-Nikodym derivative of $\Gamma_t$ by $\mu_t$. Then $\gamma \in L^2([0,1],L^2(\mu_t))$, and $\Tilde{E}^\lambda(\mu,\Gamma) = E^\lambda(\mu,\gamma)$, ending the proof.
\end{proof}
This solution has additional structure: at all times, the measure $\mu_t$ is the pushforward of $\mu_0$ through a bi-Lipschitz homeomorphism.
\begin{proposition}
    Let $(\mu,\gamma)$ be a solution to the system \eqref{eq:sys_transp_momentum} with finite $E^\lambda$ energy. Then, there exists a bi-Lipschitz homeomorphism curve $\psi_t$ such that $\psi_t$ is the unique solution to the flow equation
    \begin{equation}
        \begin{cases}
            \partial_t \psi_t = v_t \circ \psi_t \, \\
            \psi_0 = \Id \,.
        \end{cases}
    \end{equation}
    Furthermore, at all times $t \in [0,1]$, we have: $\mu_t = \psi_{t\#} \mu_0$. 
\end{proposition}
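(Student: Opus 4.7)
The plan is to proceed in three steps, the first two being essentially mechanical applications of earlier results, and the third requiring a uniqueness argument for the continuity equation.

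First, I would establish an $L^1$-in-time control on the total variation of $\Gamma_t$. Using the preceding lemma stating $|\Gamma_t|_{TV} \leq \|\gamma_t\|_{L^2(\mu_t)}$, together with Cauchy--Schwarz in time and the finiteness of $E^\lambda(\mu,\gamma)$, one obtains
\begin{equation*}
    \int_0^1 |\Gamma_t|_{TV}\,dt \;\leq\; \int_0^1 \|\gamma_t\|_{L^2(\mu_t)}\,dt \;\leq\; \left(\int_0^1 \|\gamma_t\|_{L^2(\mu_t)}^2\,dt\right)^{1/2} \;<\; +\infty\,.
\end{equation*}
This places $\Gamma_t$ inside the hypothesis of Proposition \ref{prop:tv_control_existence_homeo_bilip}.

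Second, applying that proposition directly yields a bi-Lipschitz homeomorphism curve $t \mapsto \psi_t$ uniquely solving $\partial_t \psi_t = v_t \circ \psi_t$ with $\psi_0 = \Id$, together with the quantitative estimate \eqref{eq:estim_homeo_lip_tv}. In particular, $v_t$ is spatially Lipschitz for a.e.\ $t$, with $\int_0^1 \operatorname{Lip}(v_t)\,dt < +\infty$.

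The third and main step is to identify $\mu_t$ with $\psi_{t\#}\mu_0$. I would consider the pushforward curve $\tilde{\mu}_t \coloneqq \psi_{t\#}\mu_0$. Since $\psi_t$ is the Lagrangian flow of $v_t$, a standard computation (differentiating $\int \varphi\,d\tilde{\mu}_t = \int \varphi \circ \psi_t\,d\mu_0$ against a test function $\varphi$) shows that $\tilde{\mu}_t$ solves $\partial_t \tilde{\mu}_t + \nabla \cdot (v_t \tilde{\mu}_t) = 0$ in the sense of distributions, with $\tilde{\mu}_0 = \mu_0$. The original curve $\mu_t$ solves the same equation with the same initial datum. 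To conclude $\mu_t = \tilde{\mu}_t$, I would invoke uniqueness for the continuity equation driven by a vector field that is spatially Lipschitz with $L^1$-in-time Lipschitz constant — the classical regime covered by the superposition principle of \cite[Chap.\ 8]{ambrosio2005gradient}, under which every distributional solution coincides with the pushforward of the initial datum along the unique Lagrangian flow.

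The expected main obstacle is this last identification: $\mu_t$ is a priori only an $AC^2$ curve in $\mathcal{P}(\Omega)$ rather than absolutely continuous with respect to Lebesgue measure, so one cannot argue naively through characteristics in an Eulerian sense. However, the Lipschitz regularity of $v_t$ inherited from the TV bound on $\Gamma_t$ is precisely the standard hypothesis that rules out branching of trajectories and forces the superposition representation to be a Dirac mass over the single Lagrangian flow $\psi_t$, yielding $\mu_t = \psi_{t\#}\mu_0$. All other steps reduce to invoking previously established results with no additional technical input.
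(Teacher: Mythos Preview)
Your proof is correct and follows the same approach as the paper, which simply states the result is ``a direct consequence of Proposition \ref{prop:tv_control_existence_homeo_bilip} and of the inequality $|\Gamma|_{TV} \leq \|\gamma\|_{L^2(\mu)}$.'' Your proposal is in fact more complete: you explicitly carry out the identification $\mu_t = \psi_{t\#}\mu_0$ via uniqueness for the continuity equation under a Lipschitz vector field, which the paper leaves implicit.
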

\begin{proof}
    This a direct consequence of Proposition \ref{prop:tv_control_existence_homeo_bilip} and of the inequality $|\Gamma|_{TV} \leq \|\gamma\|_{L^2(\mu)}$.
\end{proof}



\section{Sliced Energy Distance.}\label{SecSlicedED}
In this section, we show how various interaction functionals involving the Energy-Distance can be statistically estimated in $O(n\log n)$ time, where $n$ is the number of points. This almost-linear complexity is a significant improvement over the $O(n^2)$ time of more naive methods, making it suitable for large-scale applications.
The approach follows the same principle as the Sliced Wasserstein distance \cite{julien:hal-00476064,bonneel:hal-00881872,nadjahiphdthesis}: the original metric is replaced with a sliced approximation over 1D distributions. Applications of this principle to the Energy-Distance kernel build on ideas from \cite{hertrich2024generativeslicedmmdflows}, where an efficient method for computing the gradient of the Energy-Distance loss between two sets of $n$ Dirac masses was established. We extend these results, computing general convolutions of the kernel matrix with 1D atomic distributions.  Specifically, the convolution of the Energy-Distance kernel with any 1D measure composed of $n$ Dirac masses is computable in quasi-linear time using a sorting procedure, leveraging particularly efficient computations when the 1D points are sorted. We prove that the convolution of atomic measures in higher dimensions can be efficiently estimated using a tractable number of projections of the measures over 1D lines, with all the computations being easily parallelizable. 
\subsection{Fast estimators.}
First, we establish that, in the case of sorted 1D data, the convolution can be computed in linear time.
\begin{proposition}[Linear time complexity for sorted data]\label{prop:ed_calc_dim1}
    Let $x_0 \leq x_1 \leq \dots \leq  x_N \in \R$ and $\gamma_0,\dots,\gamma_N \in \R$. Then, the quantity
    \begin{equation}
        K \star \gamma \coloneqq \left( \sum_{j=0}^n \gamma_j |x_i - x_j| ; i \in [1,n] \right) \in \R^n
    \end{equation}
    can be computed in $O(n)$ time complexity.
\end{proposition}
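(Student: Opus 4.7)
The plan is to exploit the sorting to remove the absolute value, and then recognize that the resulting expression is built out of prefix sums that can be accumulated in a single left-to-right pass.

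First I would fix an index $i$ and split the summation according to the sign of $x_i - x_j$. Since $x_0 \leq \cdots \leq x_n$, we have $|x_i - x_j| = x_i - x_j$ for $j \leq i$ and $|x_i - x_j| = x_j - x_i$ for $j > i$, so
\begin{equation*}
(K \star \gamma)_i \;=\; \sum_{j \leq i}\gamma_j(x_i - x_j) + \sum_{j > i}\gamma_j(x_j - x_i) \;=\; x_i\Bigl(\sum_{j \leq i}\gamma_j - \sum_{j > i}\gamma_j\Bigr) + \Bigl(\sum_{j > i}\gamma_j x_j - \sum_{j \leq i}\gamma_j x_j\Bigr).
\end{equation*}
Introducing the prefix sums $A_i \coloneqq \sum_{j \leq i}\gamma_j$ and $B_i \coloneqq \sum_{j \leq i}\gamma_j x_j$, together with the totals $S \coloneqq A_n$ and $T \coloneqq B_n$, this rewrites as
\begin{equation*}
(K \star \gamma)_i \;=\; x_i(2A_i - S) + (T - 2B_i).
\end{equation*}

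Next I would argue on complexity. A single forward sweep computes all the $A_i$ and $B_i$ using the recurrences $A_i = A_{i-1} + \gamma_i$ and $B_i = B_{i-1} + \gamma_i x_i$ (with $A_{-1} = B_{-1} = 0$), which costs $O(n)$ operations, after which $S$ and $T$ are read off at $i = n$. A second pass then produces each $(K \star \gamma)_i$ from the displayed formula in $O(1)$ arithmetic operations, giving $O(n)$ total.

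There is essentially no serious obstacle: the only subtlety is bookkeeping around whether the diagonal term $j = i$ sits in the ``left'' or ``right'' sum, which is harmless since $|x_i - x_i| = 0$, but must be handled consistently in the prefix sums so that the coefficients of $x_i$ and of the constant term match the identity above. One may finally remark that the same argument accommodates repeated values among the $x_j$ without modification, and that the derivation extends verbatim to vector-valued weights $\gamma_j \in \R^d$ by applying the scalar formula coordinatewise.
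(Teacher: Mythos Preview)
Your proof is correct and follows essentially the same approach as the paper: split the absolute value using the sorted order, express the result via running (prefix) sums of $\gamma_j$ and $\gamma_j x_j$, and update these by simple recurrences in a linear sweep. The only cosmetic difference is that the paper packages the two complementary sums into quantities $a_i,b_i$ satisfying a direct recurrence, whereas you keep the prefix sums $A_i,B_i$ and the totals $S,T$ separately and recombine as $x_i(2A_i-S)+(T-2B_i)$; these are equivalent bookkeepings of the same identity.
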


\begin{proof}
    Let $i \in [0,n]$. We rewrite the sum as follows:
    \begin{align}
        \sum_{j=0}^n \gamma_j |x_i - x_j| &= \sum_{j=0}^{i-1} \gamma_j (x_i - x_j) + \sum_{j=i+1}^{n} \gamma_j (x_j - x_i) \\
        &= x_i \left( \sum_{j=0}^{i-1} \gamma_j -  \sum_{j=i+1}^{n} \gamma_j \right) - \left( \sum_{j=0}^{i-1} \gamma_j x_j - \sum_{j=i+1}^{n} \gamma_j x_j \right) \\
        \sum_{j=0}^n \gamma_j |x_i - x_j| &= a_i x_i - b_i \,,
    \end{align}
    where $a_i$ and $b_i$ are scalar terms that satisfy the recurrence relations:
    \begin{equation}
    \begin{aligned}
        &\begin{cases}
            a_0 &= - \sum_{j=1}^n \gamma_j \\
            a_{i+1} &= a_i + \gamma_i + \gamma_{i+1}
        \end{cases}
        &&\text{and} \quad
        \begin{cases}
            b_0 &= - \sum_{j=1}^n \gamma_j x_j \\
            b_{i+1} &= b_i + \gamma_i x_i + \gamma_{i+1} x_{i+1}
        \end{cases}.
    \end{aligned}
\end{equation}
    Computing $((a_i,b_i);i\in[0,n])$ requires $O(n)$ operations, which concludes the proof.
\end{proof} 
The convolution of the Energy-Distance kernel with any finite measure in $\R^d$ is equal to its sliced version.
\begin{proposition}[Sliced Energy-Distance]\label{prop:sliced_ed_convol}
    Let $\gamma \in \mathcal{M}(\R^d)$ be a finite signed measure. For $\theta \in \mathcal{S}_{d-1}$, define the 1D projection on the line $\R \theta$ via $\pi_{\theta}(x) \coloneqq \langle x,\theta \rangle \theta $. Then, the convolution with the Energy-Distance kernel is sliced, meaning that there exists a positive constant $c_d$  such that, for all $x \in \R^d$:
    \begin{equation}
        K \star \gamma(x) = c_d \int_{\mathcal{S}_{d-1}} K \star (\pi_{\theta\#} \gamma)(x) d\sigma(\theta) \,,
    \end{equation}
    and
    \begin{equation}
        \langle \gamma , K \star \gamma \rangle = c_d \int_{\mathcal{S}_{d-1}} \langle \pi_{\theta \#}\gamma , K \star (\pi_{\theta \#} \gamma) \rangle d\sigma(\theta) \,,
    \end{equation}
    where $\sigma$ is the standard uniform probability measure on the sphere $\mathcal{S}_{d-1}$, and 
    \begin{equation}
        c_d = \frac{\sqrt{\pi}\Gamma\left( \frac{d+1}{2} \right)}{\Gamma\left( \frac{d}{2} \right)}\,.
    \end{equation}
\end{proposition}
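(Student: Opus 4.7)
The proof reduces to the scalar sliced identity
$$|v| \;=\; c_d \int_{\mathcal{S}_{d-1}} |\langle v, \theta\rangle|\, d\sigma(\theta), \qquad v \in \R^d.$$
The map $v \mapsto \int_{\mathcal{S}_{d-1}} |\langle v,\theta\rangle|\,d\sigma(\theta)$ is invariant under rotations of $v$ (because $\sigma$ is) and positively $1$-homogeneous, so it is a positive multiple of $|v|$. To compute this multiple I would take $v = e_1$ and parametrize the sphere by the colatitude $\phi$ from the $e_1$-axis (area element $\sin^{d-2}\phi\,d\phi$); standard Beta-function integrals give
$$\int_{\mathcal{S}_{d-1}} |\theta_1|\,d\sigma(\theta) \;=\; \frac{\int_0^\pi |\cos\phi|\sin^{d-2}\phi\,d\phi}{\int_0^\pi \sin^{d-2}\phi\,d\phi} \;=\; \frac{2\,\Gamma(d/2)}{(d-1)\sqrt{\pi}\,\Gamma\!\left(\tfrac{d-1}{2}\right)},$$
and the recurrence $\Gamma\!\left(\tfrac{d+1}{2}\right) = \tfrac{d-1}{2}\,\Gamma\!\left(\tfrac{d-1}{2}\right)$ converts the reciprocal of this expression into the stated constant $c_d = \sqrt{\pi}\,\Gamma((d+1)/2)/\Gamma(d/2)$.

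The pointwise formula then follows from Fubini's theorem. Since $\gamma$ is a finite signed measure with compactly supported positive and negative parts (or at least with $|\gamma|$ finite) and $|\langle x-y,\theta\rangle| \le |x-y|$ is $|\gamma|\otimes\sigma$-integrable,
$$K\star\gamma(x) \;=\; -\!\int |x-y|\,d\gamma(y) \;=\; -c_d\!\int_{\mathcal{S}_{d-1}}\!\int |\langle x-y,\theta\rangle|\,d\gamma(y)\,d\sigma(\theta).$$
Identifying the line $\R\theta$ with $\R$ via the isometry $t\theta\mapsto t$, the equality $|\langle x-y,\theta\rangle| = |\pi_\theta(x) - \pi_\theta(y)|$ together with the change of variables $z = \pi_\theta(y)$ identifies the inner integral with $-K\star(\pi_{\theta\#}\gamma)(x)$, which yields the first claim. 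The quadratic form identity is obtained by integrating the pointwise equality against $d\gamma(x)$, swapping integrals once more by Fubini, and recognizing the resulting expression as $\langle \pi_{\theta\#}\gamma,\, K\star \pi_{\theta\#}\gamma\rangle$ via the same change of variables on the $x$-side.

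The main obstacle is bookkeeping rather than analysis: one must systematically treat $K\star(\pi_{\theta\#}\gamma)$ as the one-dimensional Energy-Distance convolution on $\R\theta$, whose value at a point $x\in\R^d$ depends only on $\pi_\theta(x)$, and carefully justify each Fubini exchange using finiteness of $|\gamma|$ and the at-most-linear growth of the kernel. The scalar sliced identity is classical and the Gamma-function simplification for $c_d$ is a short computation, so no deeper analytic ingredient is needed.
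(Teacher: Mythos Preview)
Your argument is correct and is exactly the computation that underlies the result: the scalar identity $|v| = c_d\int_{\mathcal{S}_{d-1}}|\langle v,\theta\rangle|\,d\sigma(\theta)$ followed by Fubini, with the correct bookkeeping that $K\star(\pi_{\theta\#}\gamma)(x)$ must be read as the one-dimensional convolution on $\R\theta$ evaluated at $\pi_\theta(x)$. The paper does not spell this out; its proof consists of a one-line reference to \cite{hertrich2024generativeslicedmmdflows}, so your self-contained derivation (including the explicit Beta-integral computation of $c_d$) is in fact more detailed than what the paper provides.
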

\begin{proof}
    The proof follows directly from adapting the result of \cite[Theorem 1]{hertrich2024generativeslicedmmdflows} to a general finite measure $\gamma$.
\end{proof}
We can obtain a bound on the error resulting from approximating the integral above through a Monte-Carlo method.
\begin{proposition}[Statistical error]\label{prop:sliced_ed_convol_statistical_error}
    Let $\gamma \in \mathcal{M}(\R^d)$ be a finite signed measure with support in a ball of radius $R$, and let $x \in \R^d$. Let $\theta_1, \cdots, \theta_P$ be $P$  i.i.d. random variables uniformly distributed on the sphere $\mathcal{S}_{d-1}$. Then, the following estimators
    \begin{equation}
        \widehat{K \star \gamma(x)} \coloneqq \frac{c_d}{P} \sum_{p=1}^P K \star (\pi_{\theta_p \#}\gamma)(x) \,,
    \end{equation}
    and 
    \begin{equation}
        \widehat{\langle \gamma , K \star \gamma \rangle} \coloneqq \frac{c_d}{P} \sum_{p=1}^P \langle \pi_{\theta_p \#}\gamma , K \star (\pi_{\theta_p \#} \gamma) \rangle \,
    \end{equation}
    satisfy
    \begin{equation}\label{eq:convol_kernel_eval_estim}
        \mathbb{E}_{\theta_1, \cdots, \theta_P \sim \mathcal{U}(\mathcal{S}_{d-1})}(\|\widehat{K \star \gamma(x)} - K \star \gamma(x) \| ) \leq AR|\gamma|\sqrt{\frac{d}{P}}\,,
    \end{equation}
    and
    \begin{equation}\label{eq:scalar_kernel_prod_estim}
        \mathbb{E}_{\theta_1, \cdots, \theta_P \sim \mathcal{U}(\mathcal{S}_{d-1})}(|\widehat{\langle \gamma , K \star \gamma \rangle} - \langle \gamma , K \star \gamma \rangle | ) \leq AR|\gamma|^2\sqrt{\frac{d}{P}}\,.
    \end{equation}
    where $|\gamma|$ denotes the total variation of the measure $\gamma$, for some constant $A>0$.
\end{proposition}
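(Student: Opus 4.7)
The plan is to treat this as a standard Monte Carlo variance estimate. By Proposition \ref{prop:sliced_ed_convol}, both estimators are unbiased: setting $Y_p \coloneqq K \star (\pi_{\theta_p \#}\gamma)(x)$ and $Z_p \coloneqq \langle \pi_{\theta_p \#}\gamma, K \star (\pi_{\theta_p \#}\gamma)\rangle$, we have $\mathbb{E}[Y_p] = c_d^{-1} K\star\gamma(x)$ and $\mathbb{E}[Z_p] = c_d^{-1}\langle \gamma, K\star\gamma\rangle$. Since the $\theta_p$ are i.i.d., the classical identity $\mathbb{E}|W| \leq \sqrt{\operatorname{Var}(W)}$ applied to the centered random variables $W = \widehat{K\star\gamma(x)} - K\star\gamma(x)$ and $W = \widehat{\langle\gamma,K\star\gamma\rangle} - \langle\gamma,K\star\gamma\rangle$ reduces the task to bounding $\operatorname{Var}(Y_1)$ and $\operatorname{Var}(Z_1)$, as the average of $P$ i.i.d. centered copies scales the variance by $c_d^2/P$.

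The key step is a uniform magnitude bound on each slice term. Writing the one-dimensional convolution as $K \star (\pi_{\theta\#}\gamma)(x) = -\int |\langle \theta, x-y\rangle|\,d\gamma(y)$ and using $|\langle\theta, u\rangle|\leq |u|\leq 2R$ for $x,y$ contained in the ball of radius $R$ (assuming, as is implicit in the statement, that $x$ lies in or near this ball; otherwise $R$ should be replaced by the diameter of $\{x\} \cup \operatorname{supp}(\gamma)$), the total variation inequality $|\int f\,d\gamma| \leq \|f\|_\infty\,|\gamma|$ gives $|Y_1| \leq 2R|\gamma|$ and, by a double application, $|Z_1| \leq 2R|\gamma|^2$. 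Hence $\operatorname{Var}(Y_1) \leq 4R^2|\gamma|^2$ and $\operatorname{Var}(Z_1) \leq 4R^2|\gamma|^4$, which yields
\begin{equation*}
    \operatorname{Var}(\widehat{K\star\gamma(x)}) \leq \frac{4c_d^2 R^2 |\gamma|^2}{P}, \qquad \operatorname{Var}(\widehat{\langle\gamma,K\star\gamma\rangle}) \leq \frac{4c_d^2 R^2 |\gamma|^4}{P}.
\end{equation*}

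Finally, the $\sqrt{d}$ factor appearing in the stated bounds comes from the asymptotic behavior of $c_d$. Using Stirling's expansion for the Gamma function, one has $c_d = \sqrt{\pi}\,\Gamma((d+1)/2)/\Gamma(d/2) \sim \sqrt{\pi d/2}$ as $d \to \infty$, so there exists a universal constant $C>0$ with $c_d \leq C\sqrt{d}$ for all $d \geq 1$. Combining this with Jensen's inequality $\mathbb{E}|W|\leq \sqrt{\operatorname{Var}(W)}$ directly yields \eqref{eq:convol_kernel_eval_estim} and \eqref{eq:scalar_kernel_prod_estim} with $A \coloneqq 2C$.

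The proof is essentially routine; the only subtlety I foresee is ensuring that the uniform $|x-y|\leq 2R$ bound is justified for the evaluation point $x$ in \eqref{eq:convol_kernel_eval_estim}, which should either be stated as a hypothesis or absorbed into the definition of $R$. Using sharper variance control (e.g.\ $\operatorname{Var}_\theta|\langle\theta,u\rangle|\leq |u|^2/d$) would in fact remove the $\sqrt{d}$ factor, but the stated bound follows from the simpler uniform estimate above.
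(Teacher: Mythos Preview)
Your proof is correct and follows a more elementary route than the paper. The paper establishes the same uniform a.s.\ bound $|Y_1|\leq 2R|\gamma|$ (hence $|c_d Y_1|\leq 2c_d R|\gamma|$) but then invokes Bernstein's inequality to obtain a tail bound $\mathbb{P}(|W|\geq t)\leq 2\exp(-c P t^2/(a+bt))$ and integrates this tail bound in $t$ to recover the $O(c_d/\sqrt P)$ control on $\mathbb{E}|W|$; the $\sqrt d$ then enters via $c_d\asymp\sqrt d$ exactly as in your argument. Your variance-plus-Jensen approach ($\mathbb{E}|W|\leq\sqrt{\operatorname{Var}(W)}$, with $\operatorname{Var}(W)=c_d^2\operatorname{Var}(Y_1)/P\leq 4c_d^2R^2|\gamma|^2/P$) reaches the same conclusion in one line, bypassing the concentration machinery entirely. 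What the paper's approach buys is an explicit exponential tail bound, which is strictly stronger information than the first-moment estimate actually stated in the proposition; your approach buys brevity and requires nothing beyond the crude second-moment bound. Your side remark that a sharper variance estimate on $|\langle\theta,u\rangle|$ would remove the $\sqrt d$ factor is also correct, and indeed neither proof exploits any cancellation in $\theta$. The caveat you flag about $x$ lying in the support ball is a genuine gap in the \emph{statement} rather than in either proof; the paper's own argument silently assumes the same $|x-y|\leq 2R$ bound.
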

\begin{proof}
    In this proof we write $\mathbb{E}$ and $\mathbb{P}$ instead of $\mathbb{E}_{\theta_1, \cdots, \theta_P \sim \mathcal{U}(\mathcal{S}_{d-1})}$ and $\mathbb{P}_{\theta_1, \cdots, \theta_P \sim \mathcal{U}(\mathcal{S}_{d-1})}$.
    Let $x \in \R^d$.
    First, remark that if $\theta_1, \cdots, \theta_P \sim \mathcal{U}(\mathcal{S}_{d-1})$ then almost surely:
    \begin{equation}
        \|\widehat{K \star \gamma(x)} - K \star \gamma(x)\| \leq 2 c_d R|\gamma|\,.
    \end{equation}
    This directly implies
    \begin{equation}
        \mathbb{E}(\|\widehat{K \star \gamma(x)} - K \star \gamma(x)\|^2) \leq 4 (c_d R|\gamma|)^2\,.
    \end{equation}
    Now, we can apply Bernstein's inequality to the zero-mean variables $\widehat{K \star \gamma(x)} - K \star \gamma(x)$, to get:
    \begin{equation}
        \mathbb{P}(\|\widehat{K \star \gamma(x)} - K \star \gamma(x)\| \geq t) \leq 2\exp\left( \frac{-\frac{1}{2}P^2t^2}{4P(c_d R|\gamma|)^2 + \frac{1}{3}|\gamma|c_d R P t} \right) = 2\exp\left( -\frac{3P}{4 c_d R|\gamma|}\frac{t^2}{6 c_d R|\gamma|+t} \right) \,.
    \end{equation}
    For any $\alpha > 0$, integrating the inequality over $t$:
    Then:
    \begin{align}
        \int_0^\alpha \mathbb{P}(\|\widehat{K \star \gamma(x)} - K \star \gamma(x)\| \geq t) dt &\leq \int_0^\alpha 2\exp\left( -\frac{3P}{4 c_d R|\gamma|}\frac{t^2}{6 c_d R|\gamma|+\alpha} \right)dt \\
        &\leq 2\sqrt{\frac{\pi c_d R |\gamma| (6 c_d R|\gamma|+\alpha) }{3P}}\,.
    \end{align}
    Furthermore, we can get the bound:
    \begin{align}
        \int_\alpha^\infty \mathbb{P}(\|\widehat{K \star \gamma(x)} - K \star \gamma(x)\| \geq t) dt &\leq \int_\alpha^\infty 2\exp\left( -\frac{3P}{4 c_d R|\gamma|}\frac{t^2}{6 c_d R|\gamma|+t} \right)dt \\
        &\leq \int_\alpha^\infty 2\exp\left( -\frac{3Pt}{4 c_d R|\gamma|}\left(1-\frac{6 c_d R|\gamma|}{t}\right) \right)dt \\
        &=2\exp\left( \frac{9P}{2} \right)\frac{4 c_d R|\gamma|}{3P}\exp\left( -\frac{3P\alpha}{4 c_d R|\gamma|} \right)\,.
    \end{align}
    Choosing $\alpha = 7 c_d R|\gamma|$ for example ensures the existence of a constant $A$ such that:
    \begin{equation}
        \mathbb{E}(\|\widehat{K \star \gamma(x)} - K \star \gamma(x)\|) \leq AR|\gamma|\frac{c_d}{\sqrt{P}}\,.
    \end{equation}
    As $\frac{\Gamma(\frac{d+1}{2})}{\Gamma(\frac{d}{2})}$ behaves asymptotically as $O(\sqrt{d})$, we get the estimate \eqref{eq:convol_kernel_eval_estim}. The second estimate \eqref{eq:scalar_kernel_prod_estim} is obtained by integrating the first estimate over $\gamma$.
\end{proof}
Below is an illustration of the previous results. We consider an atomic measure $\mu$ with support on a set of distinct points $X = (x_1, \dots,x_N) \in (\R^d)^N$, with associated moment $\gamma_i$ in $\R^d$ at each point. We compute the vector $(K\star \gamma\mu(x_i),1\leq i N)$, and compare it with its sliced approximations.
\begin{figure}[h!]
    \centering
    \includegraphics[width=0.3\linewidth]{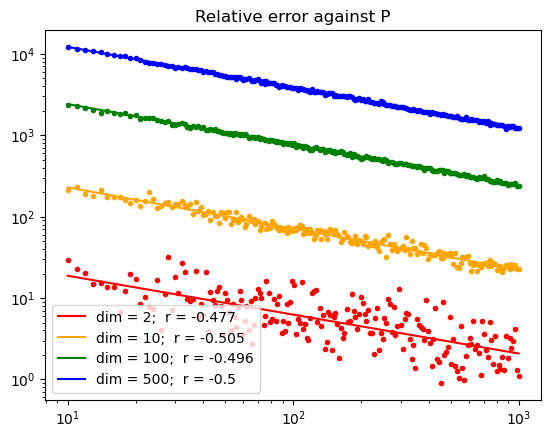}
    \includegraphics[width=0.3\linewidth]{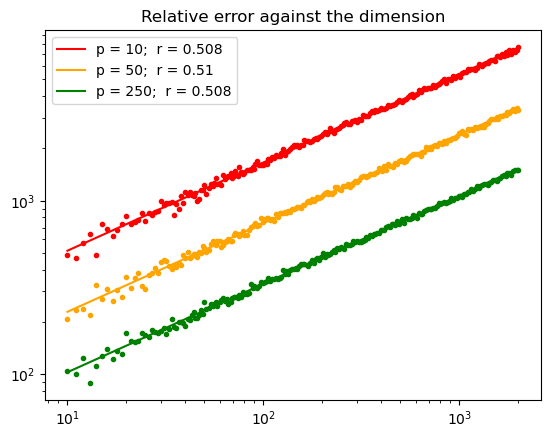}
    \includegraphics[width=0.3\linewidth]{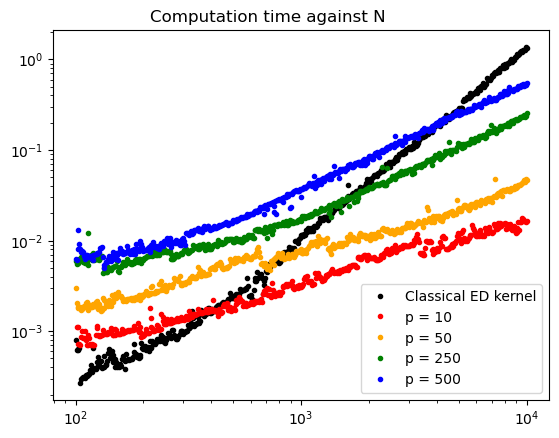}
    \caption{In the left figure, we plot the error convergence to zero in $P^{-1/2}$ between the approximated vector and the ground-truth as $P$ grows. In the middle, the error as the dimension increases. In the right-most graph, we plot the computing time against the number of points in dimension 5 on a standard 8-core CPU.}
    \label{fig:enter-label}
\end{figure}
\subsection{Use as a loss.}
Computing the Energy-Distance loss between two points clounds $(x_1,\cdots,x_N) \in \left(\R^d\right)^N$ and $(y_1,\cdots,y_M) \in \left(\R^d\right)^M$ amounts to computing $\langle \rho_{X,Y},K\star\rho_{X,Y}\rangle$, where
\begin{equation}
    \rho_{X,Y} \coloneqq \sum_{i=1}^N \frac{1}{N}\delta_{x_i} - \sum_{j=1}^M \frac{1}{M}\delta_{y_j}\,.
\end{equation}
    The previous discussion shows that $\langle \rho_{X,Y},K\star\rho_{X,Y}\rangle$ can be estimated with an $O((N+M)\log(N+M))$ complexity.
    
\section{Experiments.}
\label{SecExperiments}

To demonstrate the matching properties of the Energy-Distance kernel, we implement and evaluate the solution to Problem \eqref{eq:min_prob_without_tv} across multiple point-cloud datasets. The trajectories are parametrized by their starting points and moment values at all times (see \ref{prop:L2_mom_reparam}), and computed using a simple Euler scheme. We have to account for the constraints needed in the case of the Energy-Distance kernel, and parameterize the null-space of the metric explicitly. Let us define the projection matrix $\Pi_n \coloneqq \Id_n - \frac{1}{n}\mathbbm{1}_n$, where $\mathbbm{1}_n$ is the $n \times n$ matrix with $1$ entries. If $P_t = (P_{t,l}, 1\leq l \leq d)  \in \R^{n \times d}$, the projection on the constraint set we choose is defined by:
\begin{equation}
    \overline{P_t} \coloneqq (\Pi_n P_{t,l}, 1\leq l \leq d)  \in \R^{n \times d}\,.
\end{equation}We detail the computations in the algorithm below (recall that $K_{X}$ denotes the matrix with general entries $K(x_i,x_j)$ for $X = (x_1,\dots,x_n)$):
\begin{algorithm}[H]
    \caption{Regularized trajectories for a kernel $K$ (Euler scheme).}
    \label{algo:euler_trajectories}
    \begin{algorithmic}[1]
        \Require {Initial data $X_0 = (x_1, \dots, x_{n}) \in \mathbb{R}^d$, $T_{discr}$ the number of time discretizations, moment vector $P = (P_t, 0 \leq t < T_{discr}) \in \R^{T_{discr}\times n \times d}$. If $K$ is the Energy-Distance kernel, $(\alpha_t,0\leq t < T_{discr}) \in \R^{T_{discr}\times d}$ is a parametrization of translations.}
        \State $X_{traj}$
        \For {$t=0, \dots, T_{discr}-1$}
            \State Compute the velocity field
            \begin{equation}\label{algo:v_update}
                v_{t,i} \gets \begin{cases}
                        (K_{X_t}\overline{P_t})_i + \alpha_t &\text{if }$K$ \text{ is the ED kernel,}   \\
                        (K_{X_t}P_t)_i  & \text{otherwise.} 
                    \end{cases}
            \end{equation}
            \State Compute the positions at time $t+1$:
            \begin{equation}
                x_i(t+1) \gets x_i(t) + \frac{1}{T_{discr}}v_t(x_i(t),P_t)
            \end{equation}
        \EndFor
        \State \Return $X(P) = (X_t(P), 0\leq t \leq T_{discr}) \in \R^{(T_{discr}+1)  \times n \times d}$.
    \end{algorithmic}
\end{algorithm}
Note that the computations needed for $v_t$ \eqref{algo:v_update} have an $O(n^2)$ time-complexity for general kernels, but $O(n\log{n})$ time-complexity for the sliced computations of the Energy-Distance kernel. 
We propose to solve the constrained optimization problem \eqref{eq:min_prob_without_tv} using an Augmented Lagrangian method \cite{bertsekas2014constrained} with adaptive penalty update with an inner LBFGS optimization. The algorithm operates with a kernel $K$ as follows:

\begin{algorithm}[H]
    \caption{Augmented Lagrangian registration.}
    \begin{algorithmic}[1]
        \Require {Initial data $X_0 = (x_1, \dots, x_{n}) \in \mathbb{R}^d$, target data $Y_0 = (y_1, \dots, y_{m}) \in \mathbb{R}^d$, $\mathcal{L}(\cdot,\cdot)$ the loss between point-clouds, $T_{discr}$ the number of time discretizations, $k_{max}$ the number of optimization steps, $\varepsilon$ the tolerance to the loss constraint, $\rho_{init}$ a parameter.}
        \State $P \gets 0 \in \mathbb{R}^{T_{discr}\times n \times d   } \,.$
        \State $\lambda \gets 0 \,.$
        \State $\rho \gets \rho_{init}\,.$
        \For {$k=1, \dots, k_{max}$}
            \State Define the augmented Lagrangian 
            \begin{equation}
                \mathcal{L}_{\lambda,\rho}(X(P),P) \coloneqq \frac{\rho}{2}\mathcal{L}(X_1(P),Y_0)^2 + \lambda\mathcal{L}(X_1(P),Y_0) + \frac{1}{T_{discr}}\sum_{t=1}^{T_{discr}} \langle P_t , K_{X_t(P)} P_t \rangle \,.
            \end{equation}
            \State Compute the deformation curve $X(P) = (X_t(P),1\leq t \leq T_{discr})$ using Algorithm \ref{algo:euler_trajectories}. Note that the third term of the augmented Lagrangian can be computed at the same time.
            \State Update $P$ via an LBFGS optimization step of the augmented Lagrangian $\mathcal{L}_{\lambda,\rho}(X(P),P) \,.$
            \State Update $\lambda$ and $\rho$ :
            \begin{equation}
                \begin{aligned}
                    &\lambda \gets \lambda + \rho\mathcal{L}(X_1(P),Y_0)
            &&\text{and} \quad
                    &\rho \gets
                    \begin{cases}
                        1.2\rho &\text{   if   } \mathcal{L}(X_1(P),Y_0) > \varepsilon \,,  \\
                        \rho & \text{otherwise.} 
                    \end{cases}
                \end{aligned}
            \end{equation}
        \EndFor
        \State \Return $X(P),P\,.$
    \end{algorithmic}
\end{algorithm}
When implemented with the sliced approximation of the augmented Lagrangian terms, the final algorithm achieves $O(n\log n)$ time complexity.
In the figure below, we apply this algorithm with a tuned $\sigma$ Gaussian kernel and with the Energy-Distance kernel. The loss we choose is a sliced Energy-Distance loss, with $P = 1000$ projections.

\begin{figure}[h!]
    \centering
    \begin{tabular}{cc}
    \includegraphics[width=0.3\linewidth]{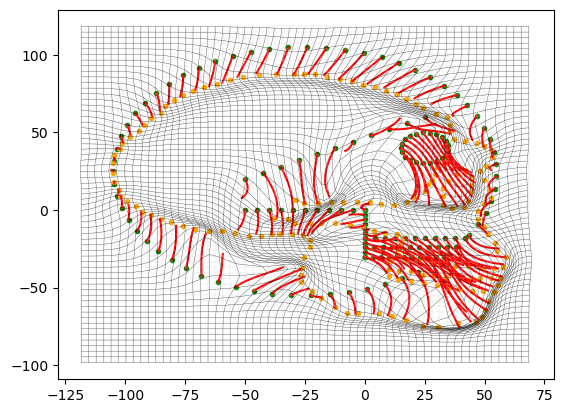}
    \includegraphics[width=0.3\linewidth]{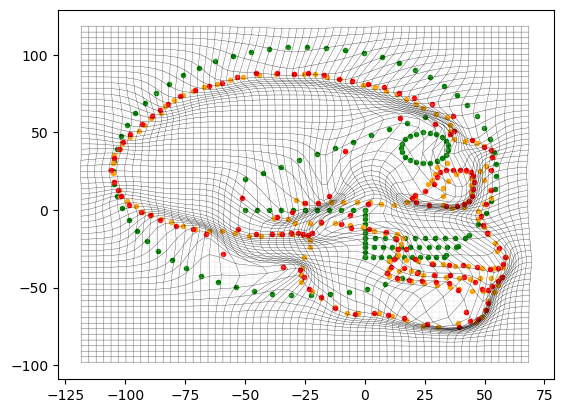}
    \includegraphics[width=0.3\linewidth]{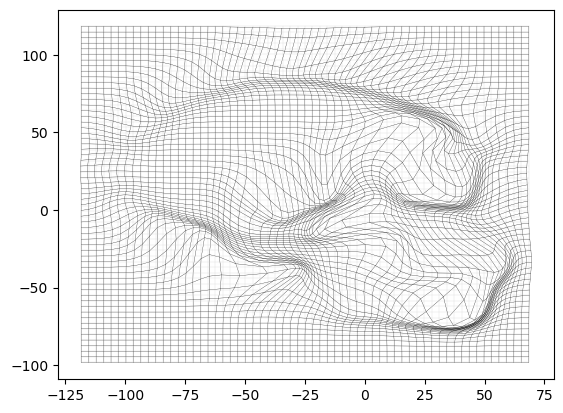} \\
    \includegraphics[width=0.3\linewidth]{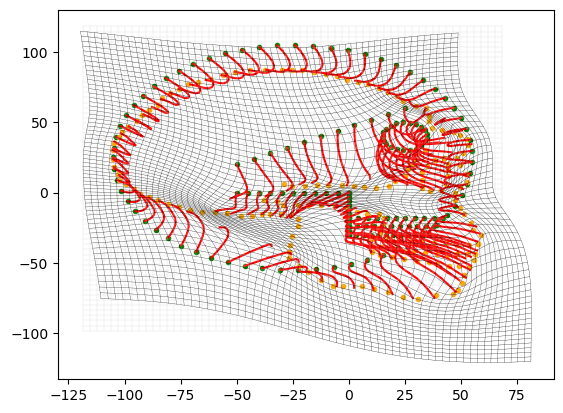}
    \includegraphics[width=0.3\linewidth]{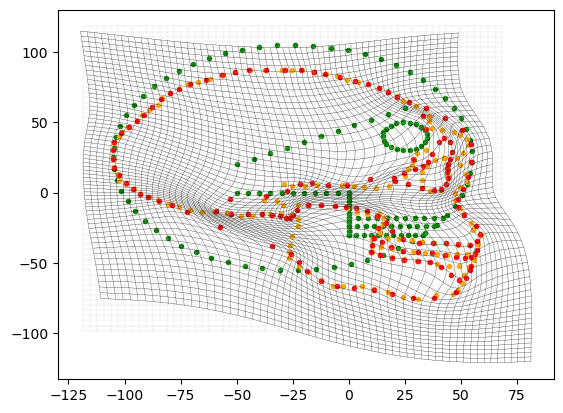}
    \includegraphics[width=0.3\linewidth]{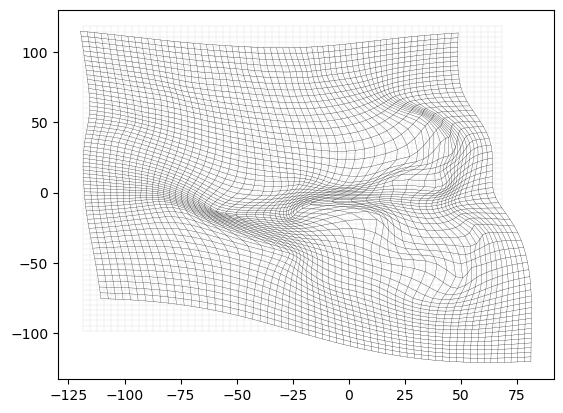}
    \end{tabular}
    \caption{Registration with Gaussian kernel at the top, Energy-Distance at the bottom. The final losses are comparable.}
    \label{fig:gaussian_vs_ed}
\end{figure}
In the case of the Energy-Distance kernel, we observe that the resulting deformation exhibits significantly less localized behavior when compared to the Gaussian kernel.
Moreover, the algorithm above can be applied with the sliced ED approximation of the particle path. The behavior seems highly stable, matching points with as low as $2$ projections, and a convergence to the true deformation as the number of projections grows.

\begin{figure}[h!]
    \centering
    \begin{tabular}{cccc}
    \includegraphics[width=0.24\linewidth]{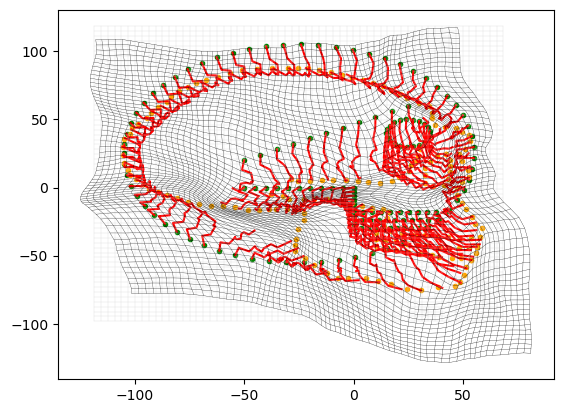}
    \includegraphics[width=0.24\linewidth]{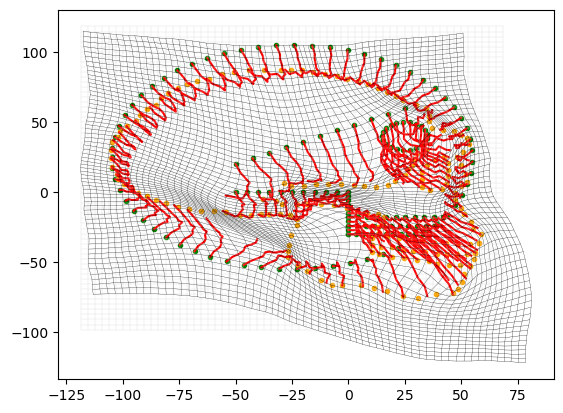}
    \includegraphics[width=0.24\linewidth]{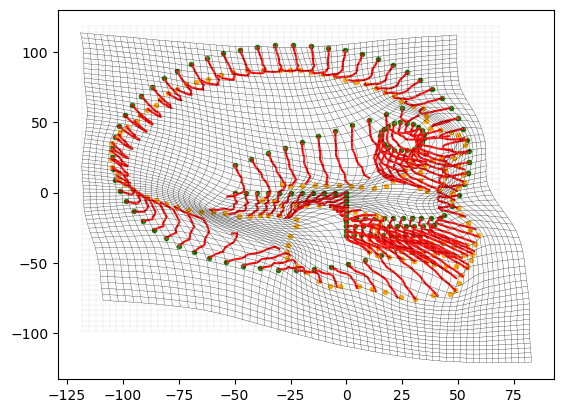}
    \includegraphics[width=0.24\linewidth]{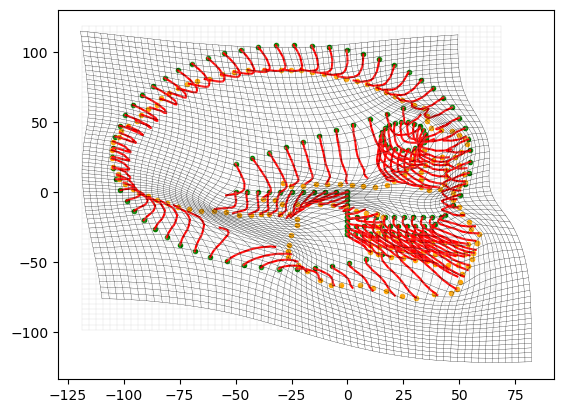} \\
    \includegraphics[width=0.24\linewidth]{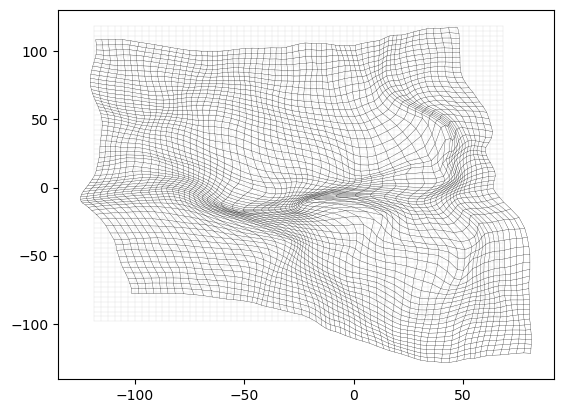} 
    \includegraphics[width=0.24\linewidth]{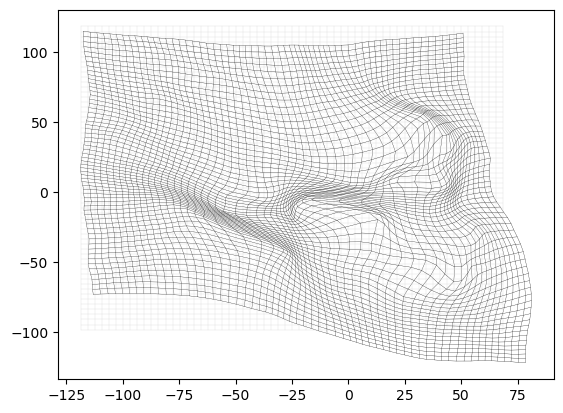} 
    
    \includegraphics[width=0.24\linewidth]{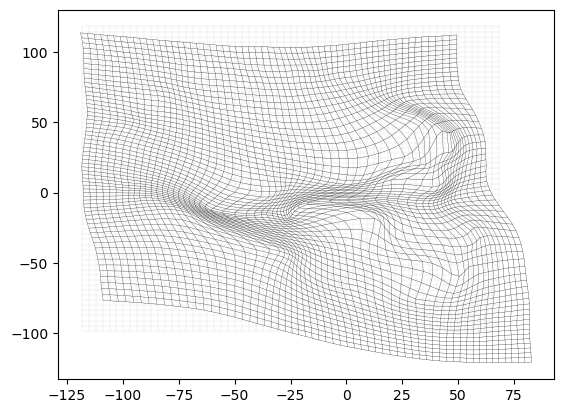} 
    
    \includegraphics[width=0.24\linewidth]{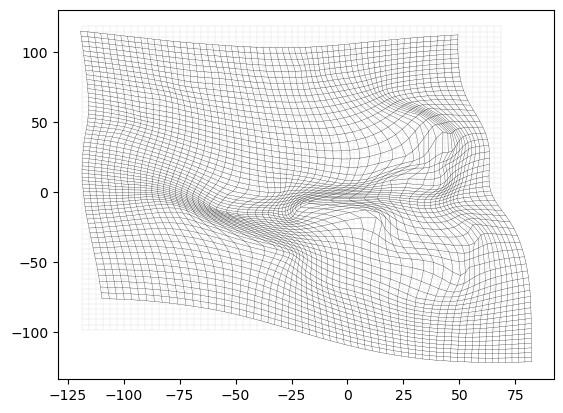}  
    \end{tabular}
    \caption{Same registration problem using the sliced computation of the kernel. From left to right $P = 2, 10, 50, 500\,.$}
    \label{fig:slicing_effect}
    
\end{figure}

The algorithm successfully recovers accurate matches even for small projection numbers $P$. Furthermore, the resulting deformations stabilize as $P$ increases, exhibiting convergence toward the true Energy-Distance (ED) registration. Notably, while the deformation paths may become less regular with fewer projections, the final registration quality remains comparable, suggesting robustness to the choice of $P$.
Moreover, Energy-Distance registration naturally handles translated shapes, as translations lie in the null space of the deformation metric.
\begin{figure}[h!]
    \centering
    \begin{tabular}{cccc}
        \includegraphics[width=0.24\linewidth]{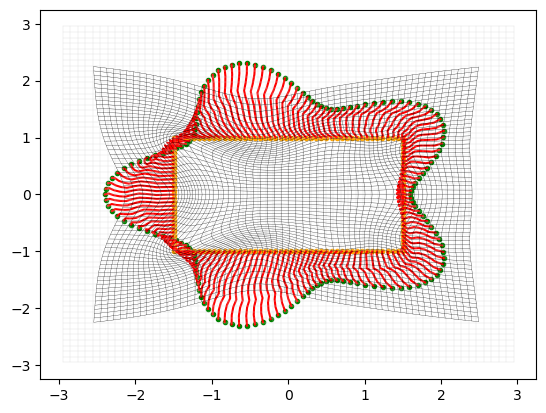}
        \includegraphics[width=0.24\linewidth]{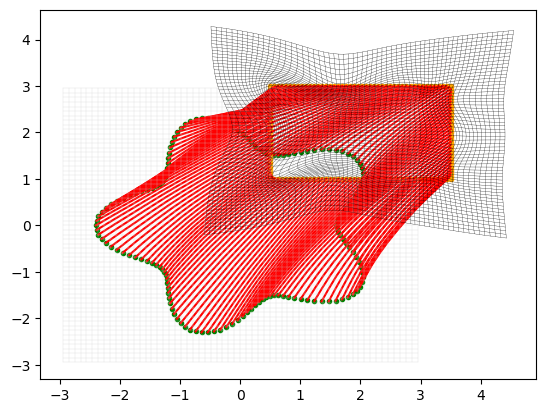}

        \includegraphics[width=0.24\linewidth]{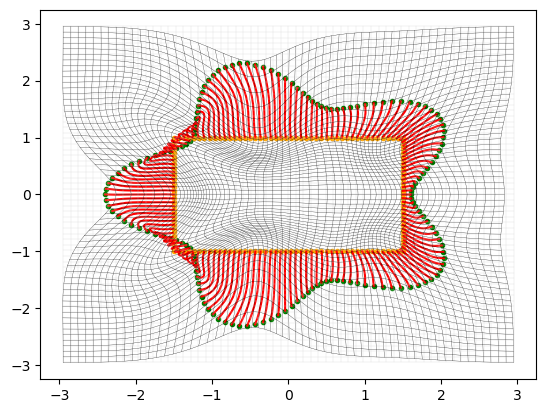}
        \includegraphics[width=0.24\linewidth]{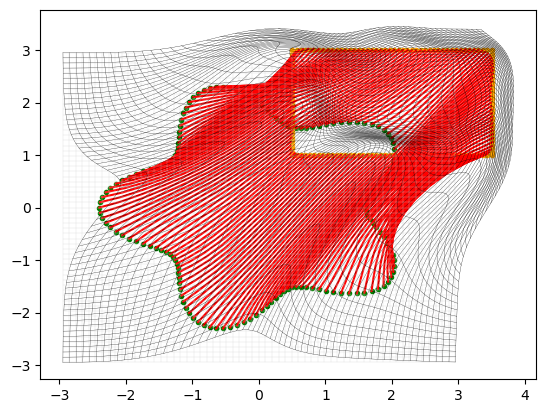} \\
        \includegraphics[width=0.24\linewidth]{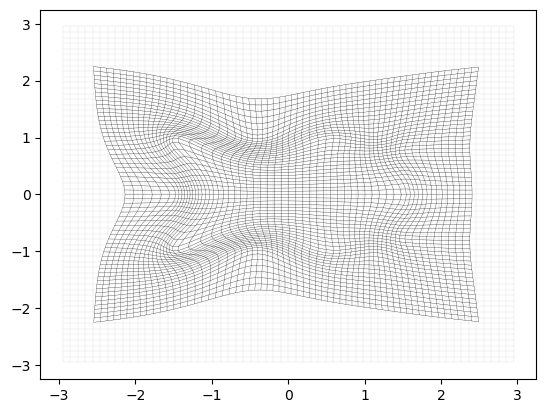}
        \includegraphics[width=0.24\linewidth]{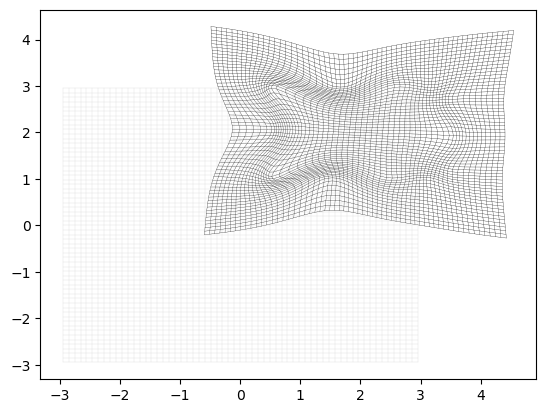}
        \includegraphics[width=0.24\linewidth]{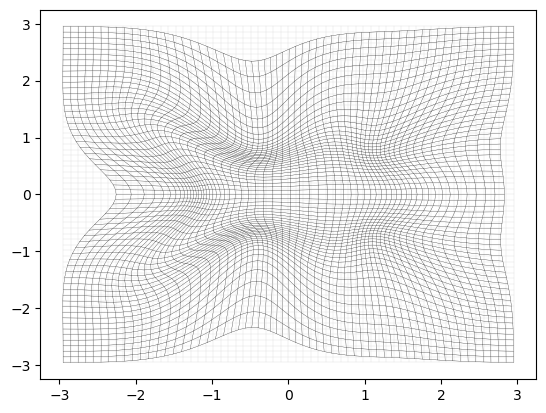}
        \includegraphics[width=0.24\linewidth]{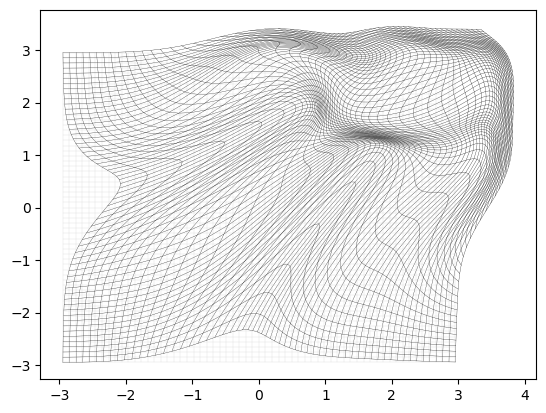}
    \end{tabular}
    \caption{Visualization of translated registration results: Energy-Distance kernel (left) and a tuned Gaussian kernel results (right).}
    \label{fig:translated_registration}
\end{figure}
Our method with sliced computations scales to a large number of points. The matching accuracy is inherently limited by random sampling, which does not uniformly weight all regions of interest of the shapes.  Moreover, the current method does not incorporate additional geometric or contextual information other than the point cloud coordinates. The following experiment can be run in under a minute on a standard recent computer. 

\begin{figure}[h!]
    \centering
    \begin{tabular}{cc}
    \includegraphics[width=0.25\linewidth]{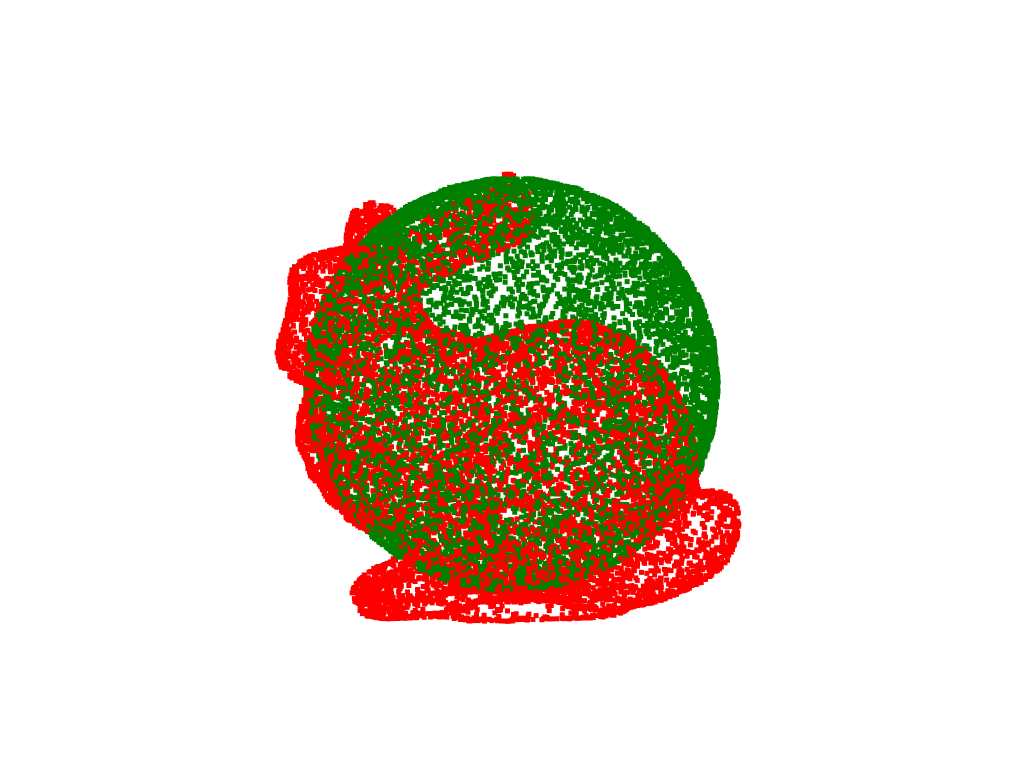}
    \includegraphics[width=0.25\linewidth]{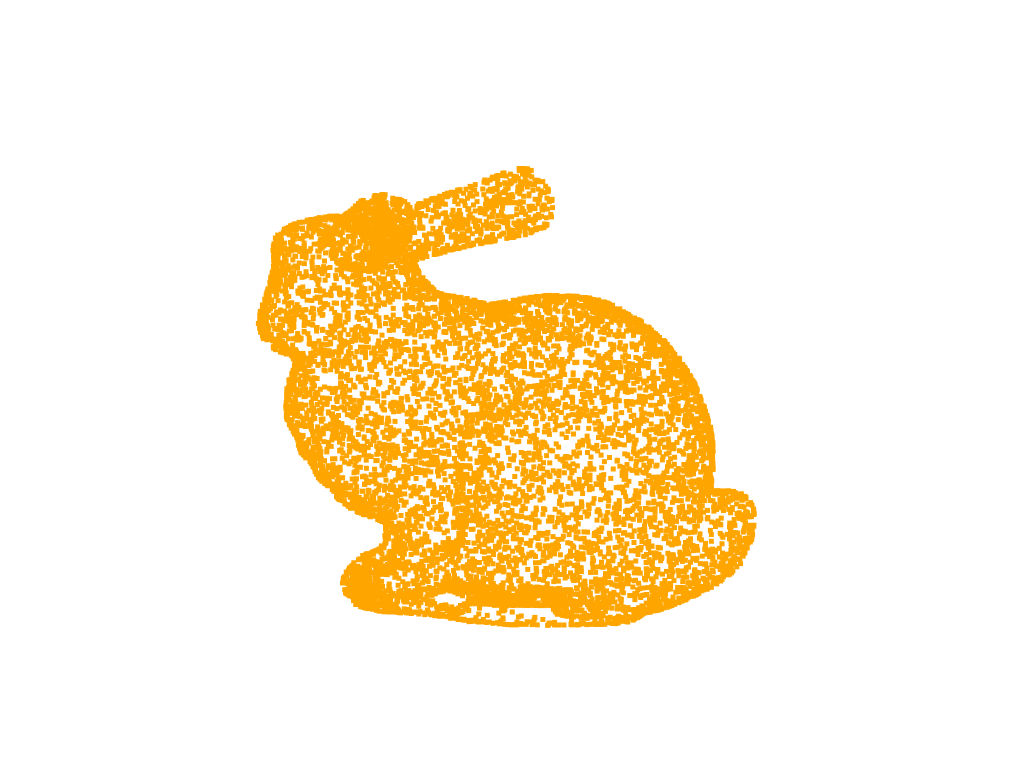}
    \includegraphics[width=0.25\linewidth]{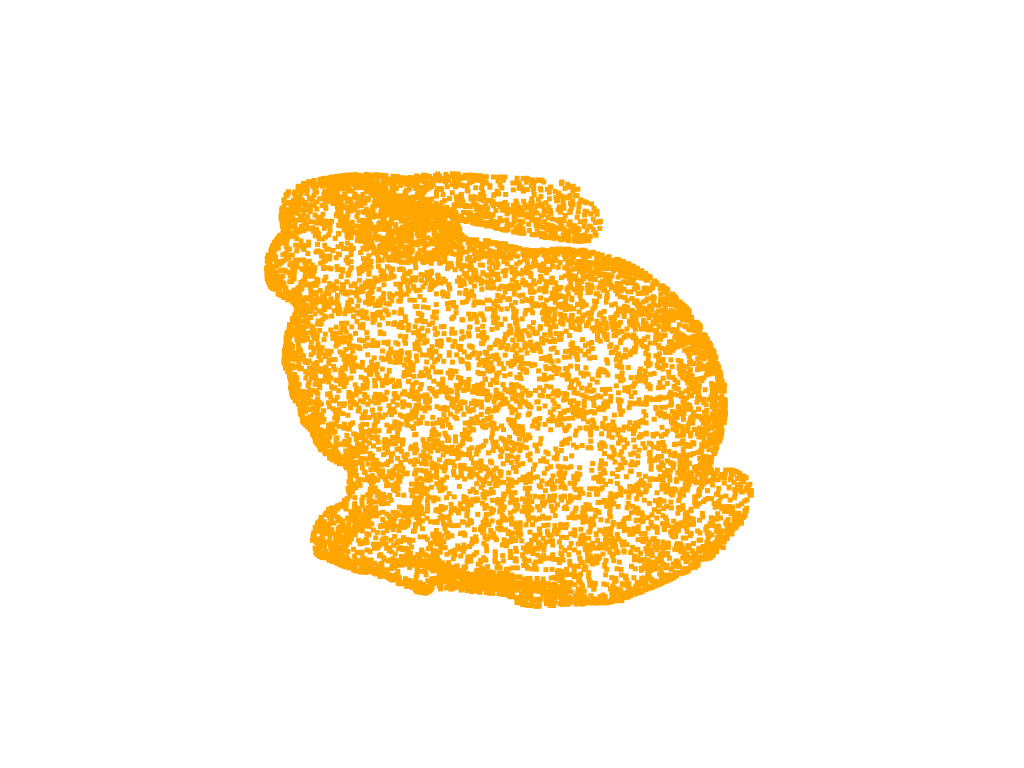} \\
    \includegraphics[width=0.25\linewidth]{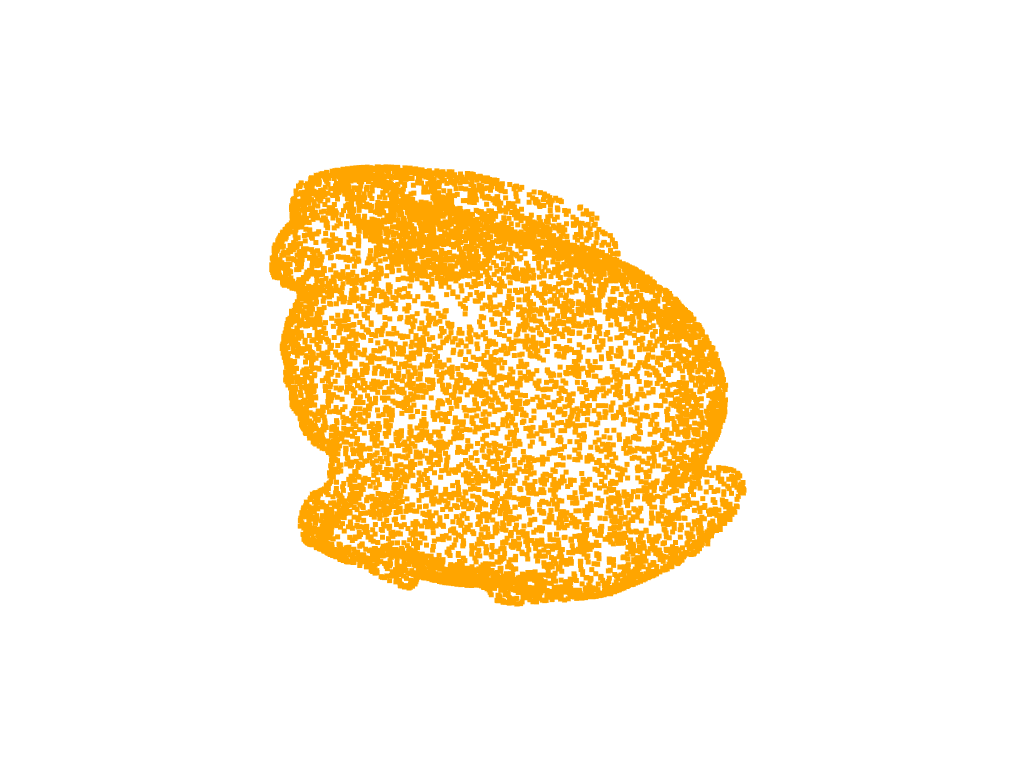}
    \includegraphics[width=0.25\linewidth]{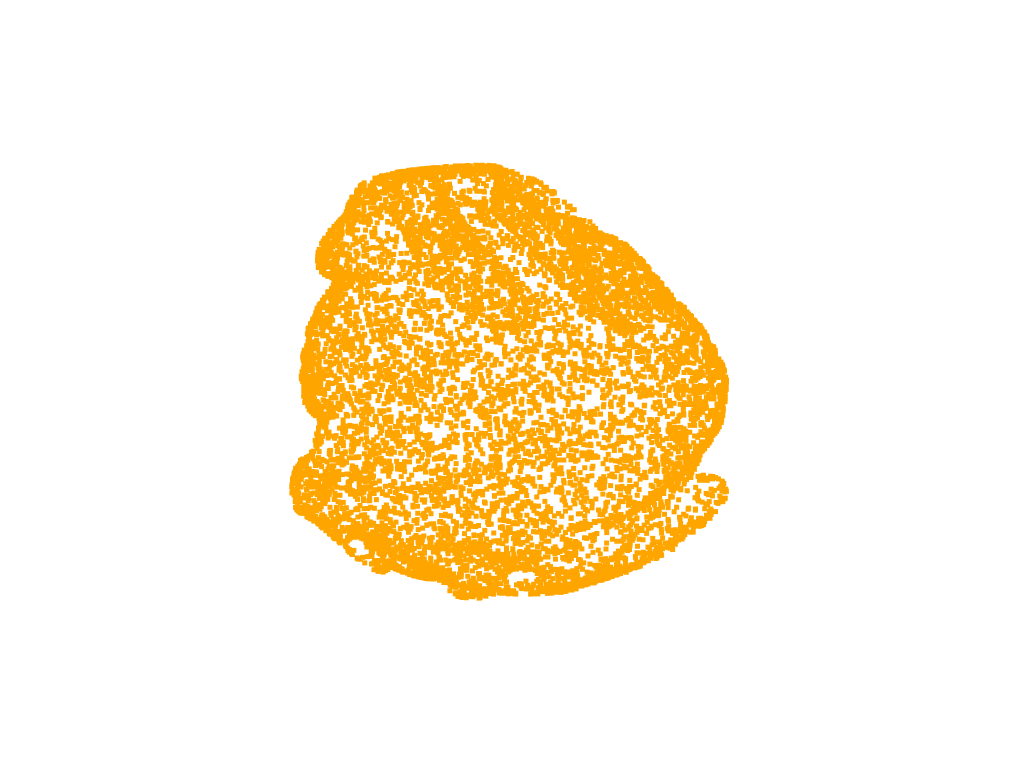}
    \includegraphics[width=0.25\linewidth]{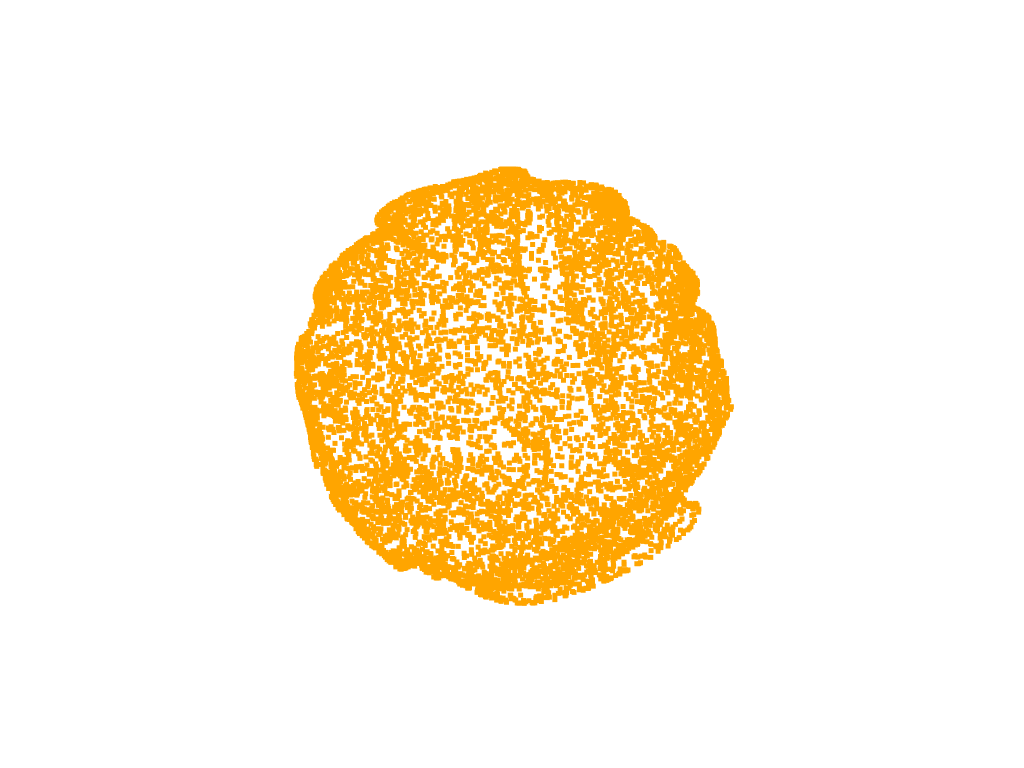}
    \end{tabular}
    \caption{Registration with a sliced ED kernel and a sliced ED loss on 10000 points.}
    \label{fig:3d_defo_bunny}
\end{figure}

\newpage
\clearpage
\markboth{}{}
\section*{Acknowledgements}
The authors want to warmly thank Stefan Sommer and his co-authors for sharing their work around conditionally positive kernels and stimulating discussions on this topic.

\vspace{0.2cm}

\noindent
\textbf{Funding: }This work was supported by the Bézout Labex (New Monge Problems), funded by ANR, reference ANR-10-LABX-58.

\vspace{0.2cm}

\printbibliography
\clearpage
\markboth{APPENDIX}{APPENDIX}
\newpage
\appendix
\clearpage
\markboth{APPENDIX}{APPENDIX}
\section{Appendix}

\begin{lemma}
    Consider the convex set $B_{0,TV} \coloneqq \left\{ \Gamma \in \mathcal{M}_0(\Omega,\R^d) \,\bigg|\, |\Gamma|_{TV} \leq 1 \right\}$.
    Then, its set of extremal points  is
\begin{equation}\label{EqExtremalPoints}
        \operatorname{Ext}(B_{0,TV}) = \left\{ \sum_{i = 1}^k v_i \delta_{x_i}, x_i \neq x_j (i\neq j) \in \Omega \text{ s.t. } (v_1,\ldots,v_{k-1})  
  \text{ linearly independent and } \sum_{i = 1}^k | v_i| =1 \right\} \,.
    \end{equation}
\end{lemma}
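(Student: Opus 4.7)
The plan is to prove the two inclusions in~\eqref{EqExtremalPoints} separately: the forward inclusion $\operatorname{Ext}(B_{0,TV}) \subseteq \{\cdots\}$ via an application of the Dubins--Klee extreme-point theorem followed by a linear-algebraic perturbation argument, and the reverse inclusion through a direct equality-case analysis in the triangle inequality.

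For the forward inclusion, I would first rely on two standard facts. The closed ball $B_{TV} \subset \mathcal{M}(\Omega,\R^d)$ is weak-$*$ compact (Banach--Alaoglu, since $\Omega$ is compact), and a Radon--Nikodym argument shows that its extreme points are exactly $\{v\delta_x : x \in \Omega,\ |v| = 1\}$. The zero-mean constraint $\Gamma(\Omega) = 0 \in \R^d$ defines a weak-$*$ closed affine subspace of codimension $d$ (it is the intersection of $d$ weak-$*$ continuous hyperplanes). Dubins' theorem then asserts that every $\Gamma \in \operatorname{Ext}(B_{0,TV})$ can be written as a convex combination of at most $d+1$ extreme points of $B_{TV}$; regrouping atoms sharing the same $x_i$ and discarding zero terms gives $\Gamma = \sum_{i=1}^k v_i \delta_{x_i}$ with $k \leq d+1$ pairwise distinct $x_i$, all $v_i \neq 0$, $\sum_i v_i = 0$, and $\sum_i |v_i| = |\Gamma|_{TV} = 1$. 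To promote this to linear independence of $(v_1,\ldots,v_{k-1})$, I argue by contraposition: if the family is dependent, then the kernel of the linear map $\beta \in \R^k \mapsto \sum_i \beta_i v_i \in \R^d$ has dimension $\geq 2$ (it already contains $(1,\ldots,1)$), and imposing the additional scalar constraint $\sum_i \beta_i |v_i| = 0$ still leaves a nonzero $\beta \in \R^k$. Setting $\eta = \sum_i \beta_i v_i \delta_{x_i}$ and taking $\epsilon > 0$ small enough that $1 \pm \epsilon \beta_i > 0$ for all $i$, one checks $\Gamma \pm \epsilon \eta \in B_{0,TV}$ (zero-mean by construction, and $|\Gamma \pm \epsilon\eta|_{TV} = \sum_i (1 \pm \epsilon\beta_i) |v_i| = 1$), so $\Gamma = \tfrac12((\Gamma + \epsilon\eta) + (\Gamma - \epsilon\eta))$ contradicts extremality.

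For the reverse inclusion, suppose $\Gamma = \sum_{i=1}^k v_i \delta_{x_i}$ satisfies the stated conditions and decomposes as $\Gamma = \tfrac12(\Gamma_1 + \Gamma_2)$ with $\Gamma_j \in B_{0,TV}$. The chain $1 = |\Gamma|_{TV} \leq \tfrac12(|\Gamma_1|_{TV} + |\Gamma_2|_{TV}) \leq 1$ forces $|\Gamma_j|_{TV} = 1$, and the equality case in the measure inequality $|\Gamma_1 + \Gamma_2| \leq |\Gamma_1| + |\Gamma_2|$ gives $|\Gamma_1| + |\Gamma_2| = 2|\Gamma|$ as measures, so the supports of $\Gamma_j$ are contained in $\{x_1,\ldots,x_k\}$. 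Writing $\Gamma_j = \sum_i w_i^{(j)} \delta_{x_i}$, pointwise equality in $|w_i^{(1)} + w_i^{(2)}| \leq |w_i^{(1)}| + |w_i^{(2)}|$ forces $w_i^{(1)}, w_i^{(2)}$ to be nonnegatively collinear; since $v_i \neq 0$, we may write $w_i^{(j)} = c_i^{(j)} v_i$ with $c_i^{(j)} \geq 0$ and $c_i^{(1)} + c_i^{(2)} = 2$. The zero-mean identity $\sum_i c_i^{(j)} v_i = 0$, combined with $v_k = -\sum_{i<k} v_i$ and the linear independence of $(v_1,\ldots,v_{k-1})$, forces $i \mapsto c_i^{(j)}$ to be constant in $i$; the normalization $\sum_i c_i^{(j)} |v_i| = 1$ then fixes this constant to $1$. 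Hence $\Gamma_1 = \Gamma_2 = \Gamma$, confirming extremality.

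The main obstacle I anticipate is setting up Dubins' reduction cleanly: one must verify its hypotheses (weak-$*$ compactness of $B_{TV}$, weak-$*$ closedness and finite codimension of the zero-mean subspace) in the locally convex space of vector-valued measures, and carefully track how the codimension $d$ translates into the bound $k \leq d+1$ on the number of atoms after regrouping coinciding support points. Once atomicity is secured, both the perturbation argument and the equality-case analysis reduce to routine finite-dimensional linear algebra.
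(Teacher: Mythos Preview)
Your proof is correct and follows essentially the same two-part structure as the paper's: an external result reduces extremal points to finite atomic measures (you invoke Dubins--Klee where the paper cites Fisher--Jerome), a perturbation argument upgrades this to the linear-independence condition, and the reverse inclusion is handled via the equality case of the triangle inequality together with the rank condition on $(v_1,\ldots,v_{k-1})$. Your perturbation, which imposes the extra scalar constraint $\sum_i \beta_i |v_i| = 0$ so that both $\Gamma \pm \varepsilon\eta$ land exactly on the unit sphere, is in fact slightly cleaner than the paper's version.
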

\begin{proof}
    By the result of Fisher and Jerome \cite{FISHER197573}, the extreme points of the solutions to 
    $\min_{\mu} \| \mu \|_{TV}$ such that $\mu(\Omega) = 0$ are necessarily of the form $\sum_{i = 1}^k v_i \delta_{x_i}$ with $k\leq d$ and $v_i \neq 0$. Suppose now that $v_1,\ldots,v_{k-1}$ are not linearly independent. Then, there exists $\lambda_1,\ldots,\lambda_{k-1}$ such that $\sum_{i=1}^{k-1} \lambda_i v_i = 0$. Introduce $\delta = \sum_{i=1}^{k-1} \lambda_i v_i \delta_{x_i}$ and $\tilde \mu = \sum_{i=1}^{k-1} v_i$. Consider $\nu_1 = \tilde \mu - \varepsilon \delta + \frac{1}{2} v_n \delta_{x_n}$, $\nu_2 = \tilde \mu + \varepsilon \delta + \frac{1}{2} v_n \delta_{x_n}$ for $\varepsilon >0$. Then, by construction $\mu = \nu_1 + \nu_2$ for all $\varepsilon$ and since $\delta(\Omega) = 0$, $\nu_2(\Omega) = \nu_1(\Omega) = \mu(\Omega) =0$. For $\varepsilon$ sufficiently small, we also have $\| \nu_1 \|_{TV} = \sum_{i = 1}^{k-1} |1 - \varepsilon \lambda_i| |v_i| + \frac{1}2 |v_k|$ and $\| \nu_2 \|_{TV} = \sum_{i = 1}^{k-1} |1 + \varepsilon\lambda_i| |v_i| + \frac{1}2 |v_k|$. As a consequence, $\mu = \nu_1 + \nu_2$ and $\| \nu_1 \|_{TV} + \| \nu_2 \|_{TV} = \| \mu \|_{TV}$, which shows that $\mu$ cannot be an extremal point since it is a linear combination of $\frac{1}{\|\nu_1\|_{TV}}\nu_1$ and $\frac{1}{\|\nu_2\|_{TV}}\nu_2$.

    We now show that the measures in Formula \eqref{EqExtremalPoints} are indeed extremal. Assume that one of these measures $\mu$ can be written $\mu = t\nu_1 + (1-t)\nu_2$ where $t \in (0,1)$ and $\nu_1$, $\nu_2$ of unit TV norm. Consider the decomposition of $\nu_1$ (also $\nu_2$) w.r.t. $\mu$ which can be written as $\nu_1 = \tilde \nu_1 + \nu_1^\perp$ with $\nu_1^\perp$ singular w.r.t. $\mu$. Then, since $\mu = \tilde \nu_1 + \tilde \nu_2$ we get $\| \mu \| = 1$ implies $\nu_1^\perp = \nu_2^\perp = 0$.
    Now, we have $\nu_1 = \sum_{i = 1}^k u_i \delta_{x_i}$ and $\nu_2 = \sum_{i = 1}^k w_i \delta_{x_i}$ for some vectors $u_i,w_i$ such that $\sum_{i = 1}^k w_i = \sum_{i = 1}^k u_i = 0$ and $u_i + w_i = v_i$ for $i = 1,\ldots,k$. The TV norm of $\mu$ reads $1 = \sum_{i = 1}^k | v_i | \leq \sum_{i = 1}^k (| w_i | + |u_i|) = 1$ by the previous equalities. Therefore, by the Cauchy-Schwarz inequality, necessarily $w_i$ and $u_i$ are colinear and since $w_i + u_i = v_i$ they are colinear to $v_i$. Write $w_i = \alpha_i v_i$ and $u_i = \beta_i v_i$. Then, $\sum_{i = 1}^k \alpha_i v_i = 0$ and $\sum_{i = 1}^k \beta_i v_i = 0$. Since the rank of $(v_1,\ldots,v_k)$ is $k-1$, we get that $(\alpha_1,\ldots,\alpha_k)$ and $(\beta_1,\ldots,\beta_k)$ are necessarily colinear to $(1,\ldots,1)$. This implies that $\nu_1,\nu_2$ are colinear to $\mu$ and it leads to $\nu_1 = \nu_2 = \mu$.
\end{proof}

\end{document}